\newtheorem{theorem}{Theorem}[section]
\newtheorem{lemma}[theorem]{Lemma}
\newtheorem{proposition}[theorem]{Proposition}
\newtheorem{corollary}[theorem]{Corollary}
\theoremstyle{definition}
\newtheorem{remark}[theorem]{Remark}
\newcommand{\sgn}{\textnormal{sgn}}
\title{Scattering parabolic solutions for the spatial $N$-centre problem}
\author{Alberto Boscaggin, Walter Dambrosio and Susanna Terracini}
\address{Alberto Boscaggin, Walter Dambrosio and Susanna Terracini \newline \indent
 Dipartimento di Matematica ``Giuseppe Peano'', \newline \indent
Universit\`a di Torino, \newline \indent
Via Carlo Alberto, 10,
10123 Torino, Italy}
\email{alberto.boscaggin@unito.it}
\email{walter.dambrosio@unito.it}
\email{susanna.terracini@unito.it}
\date{}
\begin{document}

\begin{abstract}
For the $N$-centre problem in the three dimensional space,
$$
\ddot x = -\sum_{i=1}^{N} \frac{m_i \,(x-c_i)}{\vert x - c_i \vert^{\alpha+2}}, \qquad x \in \mathbb{R}^3 \setminus \{c_1,\ldots,c_N\},
$$
where $N \geq 2$, $m_i > 0$ and $\alpha \in [1,2)$, we prove the existence of entire parabolic trajectories having prescribed asymptotic directions.
The proof relies on a variational argument of min-max type.
Morse index estimates and regularization techniques are used in order to rule out the possible occurrence of collisions.  
\end{abstract}
\date{\today}
\keywords{$N$-centre problem, Parabolic solutions, Morse index, Regularization of collisions.}
\subjclass{7J45, 70B05, 70F16.}

\thanks{{\bf Acknowlegments.}  The authors wish to thank Prof. Rafael Ortega for some useful discussions.  
\smallbreak
\noindent
Work partially supported by the 
ERC Advanced Grant 2013 n. 339958
{\it Complex Patterns for Strongly Interacting Dynamical Systems - COMPAT}, by the PRIN-2012-74FYK7 Grant {\it Variational and perturbative aspects of nonlinear differential problems}; A. Boscaggin and W. Dambrosio were also supported by the GNAMPA Project 2015 {\it Equazioni Differenziali
Ordinarie sulla retta reale}.
}

\maketitle
\medbreak

\section{Introduction and statement of the main result}
\setcounter{page}{1}

The $N$-centre problem is a simplified version of the restricted circular $N+1$-body problem in a rotating frame, where the centrifugal force is neglected; it concerns the
motion of a point mass moving under the attraction due to $N$ fixed centers of force $c_1,\dots,c_N$. 
In this paper we shall be concerned with homogeneous potential of degree
$-\alpha$, with $\alpha\in[1,2)$, thus including the newtonian gravitational case ($\alpha=1$), in the \emph{three dimensional space}. So the motion equation takes the form
\begin{equation}\label{main}
\ddot x = -\sum_{i=1}^{N} \frac{m_i \,(x-c_i)}{\vert x - c_i \vert^{\alpha+2}} , \qquad x \in \mathbb{R}^3 \setminus \{c_1,\ldots,c_N\},
\end{equation}
where $N \geq 2$, $m_i > 0$, $c_i \in \mathbb{R}^3$ (with $c_i \neq c_j$ for $i \neq j$) 
and 
the associated Hamiltonian is
$$
H(p,x)=\frac{1}{2}\vert p \vert^2 - \sum_{i=1}^N \frac{m_i}{\alpha \vert x- c_i \vert^\alpha}\;.
$$

Our aim is to prove the existence of unbounded non-collision entire trajectories having zero energy (i.e., \emph{parabolic trajectories}) and prescribed ingoing and outgoing directions.  In spite of their natural structural instability, these  orbits act as connections between different normalized configurations and can be used as \emph{carriers from one to the other region of the phase space}; as such, they have been used as building blocks for constructing complex trajectories (see, e.g. \cite{SoaTer12}). In recent papers \cite{BarTerVer14,BarTerVer2013, TeVe} the existence of parabolic trajectories has been considered for the 
anisotropic Kepler problem and for the $N$-body problem; more precisely, the presence of parabolic orbits and their variational character has been linked with the existence of minimal collision trajectories and eventually with the detection of unbounded families of non-collision periodic orbits \cite{LuzMad14,MadVen09,SoaTer12}. Non-trivial parabolic orbits may be of interest also from the point of view of the applications of weak KAM theory in Celestial Mechanics; indeed, since they are homoclinic to the infinity, which represents the Aubry-Mather set of our system, they can be used to construct multiple viscosity solutions of the associated Hamilton-Jacobi equation (see also \cite{Mad14}). 

We are going to prove the following result.

\begin{theorem}\label{thmain}
For any
$\xi^+, \xi^- \in \mathbb{S}^2$ with $\xi^+ \neq \xi^-$, there exists a spatial parabolic solution $x: \mathbb{R} \to \mathbb{R}^3 \setminus \{ c_1,\ldots,c_N\}$
of \eqref{main} such that
$$
\vert x(t) \vert \sim \left(\sqrt{\frac{m}{2\alpha}} (2+\alpha)\right)^{\frac{2}{2+\alpha}} \, \vert t \vert^{\frac{2}{2+\alpha}}, \qquad t \to\pm\infty,
$$
and
\begin{equation}\label{asintoticast}
\lim_{t \to \pm \infty}\frac{x(t)}{\vert x(t) \vert} = \xi^{\pm}. 
\end{equation}
\end{theorem}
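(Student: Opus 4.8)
The plan is to exploit the Maupertuis--Jacobi principle, which identifies the zero-energy solutions of \eqref{main} with the geodesics, parametrized by arc-length, of the degenerate metric $g = 2U(x)\,\langle\cdot,\cdot\rangle$ on $\R^3\setminus\{c_1,\dots,c_N\}$, where $U(x)=\sum_{i=1}^N m_i/(\alpha|x-c_i|^\alpha)$. Accordingly I would look for critical points of the Jacobi length functional $\LL(\gamma)=\int \sqrt{2U(\gamma)}\,|\gamma'|$. The immediate difficulty is that, since $\alpha<2$, the conformal factor decays like $|x|^{-\alpha}$ at infinity, so the length of any path escaping to infinity diverges; conversely, since $\sqrt{2U}\sim|x-c_i|^{-\alpha/2}$ is integrable near each centre, a path running into a collision has \emph{finite} length. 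Thus collisions are genuine variational competitors and cannot be excluded by energy considerations alone, which is the crux of the problem.

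To deal with the infinite length I would first introduce a renormalized functional on a truncated domain: for large $R$, I would produce critical points of $\LL$ among paths joining the two boundary points $R\xi^-$ and $R\xi^+$ and contained in $\overline{B_R}$. The behaviour near infinity is governed by the leading term $U\sim m/(\alpha|x|^\alpha)$ with $m=\sum_i m_i$; after the substitution $\rho = \tfrac{2}{2-\alpha}|x|^{(2-\alpha)/2}$ the Jacobi metric takes, to leading order, the normal form of the metric cone $d\rho^2 + (\tfrac{2-\alpha}{2})^2\rho^2\,d\omega^2$ over $\mathbb{S}^2$. This normal form both yields the precise growth rate $|x(t)|\sim(\sqrt{m/(2\alpha)}(2+\alpha))^{2/(2+\alpha)}|t|^{2/(2+\alpha)}$ and shows that the prescribed directions $\xi^\pm$ are attained. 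I would then establish $R$-uniform a priori estimates on the critical paths $\gamma_R$ away from the centres and pass to the limit $R\to\infty$, recovering an entire trajectory that realizes the asymptotics \eqref{asintoticast}.

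Because $\R^3\setminus\{c_1,\dots,c_N\}$ is simply connected, there is no homotopy constraint forcing the turning from $\xi^-$ to $\xi^+$, and plain minimization of $\LL$ tends to select a degenerate path through the centres. For this reason I would set up a min-max (linking) scheme whose admissible families are forced to cross a region separating the two asymptotic directions, thereby producing a saddle-type critical level that is strictly below that of the trivial broken trajectories terminating at a centre, and hence a genuine scattering geodesic.

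The hard part will be ruling out collisions in the limiting trajectory. I would attack this by a regularization of Levi-Civita--McGehee type, turning collisions into regular bounces of a smooth flow, combined with a Morse index estimate: once regularized, a colliding (bouncing) solution can be shown to carry at least one conjugate point, hence Morse index $\Ind \geq 1$, whereas the variational solution produced by the min-max scheme has index controlled by the topology of its min-max class. The resulting contradiction forces the trajectory to avoid $\{c_1,\dots,c_N\}$, completing the construction of the desired spatial parabolic solution.
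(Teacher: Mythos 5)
Your skeleton --- truncation to a fixed-endpoint problem with ends $R\xi^{\pm}$, a min-max scheme in place of plain minimization, collision exclusion via Morse index plus regularization, then the limit $R\to+\infty$ --- is the same as the paper's (the paper works with the Maupertuis functional rather than the Jacobi length, an inessential difference). But two of the critical steps fail as stated. First, the collision argument: your claimed contradiction is between ``a colliding solution has Morse index $\geq 1$'' and ``the min-max solution has index controlled by the topology of its class''. Since your admissible families are one-parameter, the control one can extract is precisely index $\leq 1$ (this is \eqref{morsefix} in the paper), which is perfectly compatible with index $\geq 1$: there is no contradiction. What closes the argument for $\alpha>1$ is Tanaka's blow-up estimate (Lemma \ref{morsetanaka}): each collision forces index at least $i(\alpha)=\max\{k\in\mathbb{N}\,:\,k<2/(2-\alpha)\}$, and $i(\alpha)\geq 2$ exactly when $\alpha>1$. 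For the Newtonian case $\alpha=1$, which the theorem must cover, $i(1)=1$ and no index count can rule out collisions; the paper instead uses Sperling-type regularization \cite{Spe69} to show that a one-collision zero-energy arc is reflection-symmetric about the collision (it retraces its own path), which is then incompatible with two distinct endpoints at equal radius via the Lagrange--Jacobi inequality (Lemma \ref{lemlj}). You invoke regularization only to manufacture a conjugate point, i.e.\ to feed the index bound, which is the direction that cannot work. Relatedly, your assertion that the min-max level lies strictly below that of broken trajectories through a centre is unavailable (and unnecessary): as you observe yourself, for $\alpha<2$ collision paths have finite Jacobi length, and in general they lie \emph{below} the min-max level, not above it.

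Second, the limit $R\to+\infty$: nothing in your plan prevents the critical paths from drifting off to infinity, i.e.\ $\min_t\vert \gamma_R(t)\vert\to+\infty$. This is a genuine scenario, not a technicality: for every $\alpha\in[1,2)$ the pure $-\alpha$-homogeneous problem admits zero-energy arcs joining any two distinct unit vectors (Proposition \ref{pianofasi}), so after rescaling the $\gamma_R$ could converge to such an arc and the limit would contain no solution of \eqref{main} whatsoever. The paper excludes this with a quantitative two-sided level estimate, \eqref{stimalivello}: the min-max value equals $\sqrt{2m/\alpha}\,\tfrac{4}{2-\alpha}\,R^{1-\alpha/2}+O(1)$, with the upper bound coming from explicit glued test families and the lower bound from the structural property \eqref{segmento} that every admissible family contains a loop crossing the segment $[c_1,c_2]$, hence entering a fixed ball; this is then played against Proposition \ref{azionebolza}, which states that any homogeneous parabolic arc between unit vectors has action \emph{strictly below} $\sqrt{2m/\alpha}\,\tfrac{4}{2-\alpha}$, yielding the contradiction. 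Your proposal contains no analogue of these estimates, and your min-max class is not specified precisely enough to produce the lower bound. Note finally that the conical normal form near infinity yields the asymptotics \eqref{asintoticast} only after one already knows that the limiting solution exists and eventually stays in the region $\{\vert x\vert \geq K\}$; it does not by itself provide the required compactness.
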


We remark that, when $\xi^+ = \xi^-$, we can still ensure the existence of a generalized spatial parabolic solutions of \eqref{main} satisfying 
\eqref{asintoticast} (for $\xi^+ = \xi^-$), having maybe some collisions with the set of the centers (see Remark \ref{xi}). 
Let us now examine our Theorem in the contest of scattering: the \emph{scattering angle} is that between the outgoing and incident directions. So Theorem
\ref{thmain} states the existence of at least one spatial trajectory having vanishing asymptotic velocity \emph{for every scattering angle}. Let us stress that this is not the case for central $-\alpha$-homogeneous potentials: for the Newtonian potential $1/r$, it is a straightforward consequence of the preservation of the Runge-Lenz vector that the only allowed scattering angle is $2\pi$. However, for potentials of the form $1/r^\alpha$ with $\alpha>1$, the parabolic trajectories form a loop, as the scattering angle can be shown to be $2\pi/(2-\alpha)>2\pi$ at zero energy (see Proposition \ref{propspan}).  This picture is in striking contrast with the positive energy case, where, for hyperbolic trajectories, all (but one) scattering angles are always achieved.
The presence of two or more centers results into the occurrence of parabolic connections between every pair of asymptotic configurations, thus allowing every value of the scattering angle, similarly with the hyperbolic case
 \cite{FelTan00,FelTan000}.

In the planar $N$-centre problem, unbounded non-collision parabolic trajectories are known to exist in various homotopy classes of paths and the zero energy shell exhibits a symbolic dynamics (see e.g. \cite{KleKna92}). Indeed, planar unbounded parabolic trajectories can be symbolically described by their topological properties. They are all local minimizers for the action and the Jacobi metric. In contrast, local action minimizing unbounded parabolic trajectories are not expected to exist in the three dimensional space. The ultimate reason rests in the properties of the scattering angle: very interestingly, the ``looping'' occurring for $\alpha\geq 1$ has been linked in \cite{Tan93b} by K. Tanaka with a change in the Morse index of the parabolic solutions (see also \cite{Tan94,Tan93}). Similarly to the case of unbounded hyperbolic trajectories \cite{FelTan00,FelTan000} our solutions too will have a nontrivial Morse index, as it will result as a mountain pass variational argument: the absence of collisions will be related with the Morse index. 
Notice, however, that the case $\alpha = 1$ is particularly delicate and indeed it requires an additional analysis, based on regularization techniques 
(see \cite{Spe69}).

It is well known since the times of Euler (in 1760) that the planar two-centre problem can be integrated by using elliptic-hyperbolic coordinates (see e.g. \cite{Whittaker}). The planar case of $N$-centre with
$N\geq 3$ is known to be non integrable on non-negative energy levels and has positive entropy; some partial extensions are available also for the spatial case (see \cite{Bol84,BolNeg03,BolNeg01,Dim10,KleKna92,Kna02}). Recently, in \cite{SoaTer12} Soave and Terracini have shown the presence of a chaotic subsystem for the planar $N$-centre problem also at negative energies.
Let us finally mention that topologically nontrivial periodic trajectories have been recently investigated both in the planar and spatial $N$-body and the $N$-centre problems by means of constrained minimization  arguments (see \cite{Cas09,Chen08, ChenYu2015,DulMon,FuGrNe,SoaTer13}).

This paper is organized in five Sections and one Appendix. Section \S2 is devoted to investigate the general properties of parabolic solutions. 
In Section \S 3 we show how to approximate entire solutions to \ref{main}, by considering the finite time interval auxiliary problem. In Section \S 4 we set up a min-max
scheme and we show  basic estimates for the critical values and the corresponding solutions, with attention to their Morse index. Finally, in Section \S 5 we study
some properties of the approximate solutions in order to control their behavior at infinity and rule out the presence of collisions, completing the proof of our Theorem. The Appendix is devoted to a systematic study of parabolic arcs of the fully $-\alpha$-homogenous case and of 
their variational characterizations.
\subsection{Notation}

The symbols $x \cdot y$ and $\vert x \vert$ denote the standard Euclidean product and Euclidean norm on $\mathbb{R}^3$,
$B_\rho(x)$ is the open ball of radius $\rho$ centered at $x$. The symbols $\langle u, v \rangle$ and $\Vert u \Vert$ stand for the usual scalar product and the associated norm on the Sobolev space $H^1([a,b]; \mathbb{R}^3)$, namely 
$$
\langle u,v \rangle = \int_a^b \left( u(t) \cdot v(t) + \dot u(t) \cdot \dot v(t) \right)\,dt,
\qquad
\Vert u \Vert = \left[\int_a^b \left(\vert u(t) \vert^2 + \vert \dot u(t) \vert^2 \right)\,dt\right]^{1/2}.
$$ 
Finally, $\textnormal{j}(A)$ is the Morse-index of a self-adjoint bounded linear operator $A$ on an Hilbert space.

\subsection{Technical estimates on the potential}\label{stimepot}

Let us define the collision set
$$
\Sigma = \{ c_1,\ldots,c_N\}
$$
and the potential 
\begin{equation}\label{defv}
V(x) = \sum_{i=1}^N \frac{m_i}{\alpha \vert x - c_i \vert^\alpha}, \qquad x \in \mathbb{R}^3 \setminus \Sigma.
\end{equation}
Also, let 
\begin{equation}\label{defmassa}
m = \sum_{i=1}^N m_i, \qquad \Xi = \max_i \vert c_i \vert.
\end{equation}
Without loss of generality, 
we finally assume that the center of mass is placed at the origin, namely
\begin{equation}\label{centromassa}
\sum_{i=1}^N m_i c_i = 0.
\end{equation}
Throughout the paper, both the behavior of $V$ near the centers $c_i$ and the behavior of $V$ at infinity will play
an important role. Hence, we fix here some useful notation.

As for the behavior of $V$ near the singularities, for any $i=1,\ldots,N$ we write
\begin{equation}\label{vsing}
V(x) = \frac{m_i}{\alpha \vert x - c_i \vert^{\alpha}} + \Phi_i(x).
\end{equation}
Of course, $\Phi_i \in \mathcal{C}^\infty(\mathbb{R}^3 \setminus (\Sigma \setminus \{c_i\}))$.
From now on, we choose a constant $\delta^* > 0$ so small that 
\begin{equation}\label{deltastar}
B_{\delta^*}(c_i) \subset B_{\Xi + 1}(0), \; \forall\, i=1,\ldots,N, \qquad B_{\delta^*}(c_i) \cap B_{\delta^*}(c_j) = \emptyset, \;\forall \,i \neq j.
\end{equation}
Moreover, we also assume
\begin{equation}\label{deltastar2}
\frac{2-\alpha}{\alpha}\frac{m_i}{\vert x - c_i \vert^{\alpha}}+2 \Phi_i(c_i) + \nabla\Phi_i(x) \cdot (x-c_i) > 0, \quad \mbox{ for } 0 < \vert x - c_i \vert \leq \delta^*, 
\end{equation}
and
\begin{equation}\label{deltastar3}
V(x) \leq \frac{3m_i}{2\alpha \vert x - c_i \vert^\alpha}, \quad \mbox{ for } 0 < \vert x - c_i \vert \leq \delta^*, 
\end{equation}
for $i=1,\ldots,N$.

On the other hand, dealing with the behavior of $V$ at infinity, we set
\begin{equation}\label{vinf}
V(x) = \frac{m}{\alpha \vert x \vert^{\alpha}} + W(x).
\end{equation}
Using \eqref{centromassa}, we can easily see that
$$
W(x) = O\left( \frac{1}{\vert x \vert^{\alpha+2}}\right) \quad \mbox{ and } \quad
\nabla W(x) = O\left( \frac{1}{\vert x \vert^{\alpha+3}}\right), \quad
\mbox{ for } \vert x \vert \to +\infty.
$$
As a consequence, we can fix constants $C_-, C_+ > 0$ and $K > \Xi + 1$ such that
\begin{equation}\label{stimaW}
\vert W(x) \vert \leq \frac{C_+}{\vert x \vert^{\alpha+2}} \quad \mbox{ and }
\quad \vert \nabla W(x) \vert \leq \frac{C_+}{\vert x \vert^{\alpha+3}}, \quad \mbox{ for every } \vert x \vert \geq K,
\end{equation}
\begin{equation}\label{stimaW2}
2 \vert W(x) \vert + \vert \nabla W(x) \cdot x \vert \leq \frac{(2-\alpha) m}{4 \alpha} \frac{1}{\vert x \vert^{\alpha}}, \quad \mbox{ for every } \vert x \vert \geq K,
\end{equation}
\begin{equation}\label{stimaV}
\frac{C_-}{\vert x \vert^{\alpha}} \leq V(x) \leq \frac{C_+}{\vert x \vert^{\alpha}}, \quad \mbox{ for every } \vert x \vert \geq K,
\end{equation}
and
\begin{equation}\label{stimak}
\sqrt{\frac{m}{\alpha}} \frac{1}{\vert x \vert^{\alpha/2}} - \frac{C_+}{\vert x \vert^{2+\alpha/2}} \leq 
\sqrt{V(x)} \leq
\sqrt{\frac{m}{\alpha}} \frac{1}{\vert x \vert^{\alpha/2}} + \frac{C_+}{\vert x \vert^{2+\alpha/2}}, \quad \mbox{ for every } \vert x \vert \geq K.
\end{equation}
The estimates \eqref{stimaW}, \eqref{stimaW2} and \eqref{stimak} are rather obvious, while \eqref{stimak} follows from \eqref{stimaW} using the elementary inequalities
$1-2\vert s \vert \leq \sqrt{1+s} \leq 1+\tfrac{1}{2}s$ (valid for $s \geq -1$).

\section{Some general properties of parabolic solutions}\label{sez2}

In this section we collect some general properties valid for ``large'' parabolic solutions of \eqref{main}.
More precisely, we deal with solutions $x:[t_1,t_2] \to \mathbb{R}^3$ of \eqref{main}, with
$-\infty \leq t_1 < t_2 \leq +\infty$ (in the case $t_i \in \{\pm \infty\}$, we obviously mean that 
$t_i$ is not included in the interval of definition of $x$), satisfying
the zero-energy relation
\begin{equation}\label{0en}
\frac{1}{2}\vert \dot x(t) \vert^2 - \sum_{i=1}^N \frac{m_i}{\alpha \vert x(t) - c_i \vert^\alpha} = 0, \quad \mbox{ for every } t \in I,
\end{equation}
and
\begin{equation}\label{largep}
\vert x(t) \vert \geq K, \quad \mbox{ for every } t \in [t_1,t_2],
\end{equation}
where $K > \Xi +1$ is the constant fixed in Subsection \ref{stimepot}.
Due to this last assumption, we always write
\begin{equation}\label{polar1}
x(t) = r(t)s(t)
\end{equation}
with $r(t) = \vert x(t) \vert \geq K$ and $s(t) = \tfrac{x(t)}{\vert x(t) \vert} \in \mathbb{S}^2$. 
In these new coordinates, recalling the definition \eqref{defv}, the fact that $x$ has zero energy reads as
\begin{equation}\label{eqenergia}
\dot r^2 + r^2 \vert \dot s \vert^2 = 2V(rs),
\end{equation}
while the differential equation \eqref{main} becomes
\begin{equation}\label{eqpolare}
\ddot r = r \vert \dot s \vert^2 +  \nabla V(rs) \cdot s, \qquad
r \ddot s = \nabla_{\mathbb{S}^2} V(rs) - r \vert \dot s \vert^2 s - 2 \dot r \dot s,
\end{equation}
where $\nabla_{\mathbb{S}^2} V(rs) = \nabla V(rs) - ( \nabla V(rs) \cdot s) s$.
\smallbreak
As a first step, we define 
$$
I(t) = \frac{1}{2}\vert x(t) \vert^2 = \frac{1}{2} r^2(t), \quad \mbox{ for every } t \in [t_1,t_2],
$$
and we establish a Lagrange-Jacobi inequality. 

\begin{lemma}\label{lemlj}
Let $x: [t_1,t_2] \to \mathbb{R}^3$ be a parabolic solution of \eqref{main} satisfying \eqref{largep}.
Then
\begin{equation}\label{stimajacobi}
\ddot I(t) \geq \frac{(2-\alpha)m}{2\alpha r^\alpha(t)}, \quad \mbox{ for every } t \in [t_1,t_2].
\end{equation}
As a consequence, either $r$ is strictly monotone on $[t_1,t_2]$ or 
there exists $t^* \in (t_1,t_2)$ such that $r$ is strictly decreasing on $[t_1,t^*)$ and
strictly increasing on $(t^*,t_2]$.
\end{lemma}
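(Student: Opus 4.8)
The plan is to establish the classical Lagrange--Jacobi identity for $I$ and then control the non-homogeneous error term by means of the estimates collected in Subsection \ref{stimepot}. First I would differentiate $I(t) = \tfrac{1}{2}\vert x(t) \vert^2$ twice, obtaining $\dot I = x \cdot \dot x$ and $\ddot I = \vert \dot x \vert^2 + x \cdot \ddot x$. Since \eqref{main} can be written as $\ddot x = \nabla V(x)$ with $V$ as in \eqref{defv}, and since the zero-energy relation \eqref{0en} gives $\vert \dot x \vert^2 = 2V(x)$, this yields the identity
$$
\ddot I = 2V(x) + \nabla V(x) \cdot x,
$$
which is the quantity I need to bound from below.

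Next I would exploit the decomposition \eqref{vinf}, namely $V(x) = \tfrac{m}{\alpha \vert x \vert^{\alpha}} + W(x)$. The leading term is homogeneous of degree $-\alpha$, so Euler's identity gives $\nabla\Big(\tfrac{m}{\alpha \vert x \vert^{\alpha}}\Big)\cdot x = -\tfrac{m}{\vert x \vert^{\alpha}}$, whence $2\,\tfrac{m}{\alpha \vert x \vert^{\alpha}} + \nabla\big(\tfrac{m}{\alpha \vert x \vert^{\alpha}}\big)\cdot x = \tfrac{(2-\alpha)m}{\alpha \vert x \vert^{\alpha}}$. Collecting the remaining contributions gives $\ddot I = \tfrac{(2-\alpha)m}{\alpha \vert x \vert^{\alpha}} + 2W(x) + \nabla W(x)\cdot x$. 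Since \eqref{largep} ensures $\vert x(t) \vert \geq K$, the estimate \eqref{stimaW2} applies and bounds the last two terms below by $-2\vert W(x) \vert - \vert \nabla W(x)\cdot x \vert \geq -\tfrac{(2-\alpha)m}{4\alpha \vert x \vert^{\alpha}}$. Therefore
$$
\ddot I \geq \frac{(2-\alpha)m}{\alpha \vert x \vert^{\alpha}} - \frac{(2-\alpha)m}{4\alpha \vert x \vert^{\alpha}} = \frac{3(2-\alpha)m}{4\alpha r^{\alpha}} \geq \frac{(2-\alpha)m}{2\alpha r^{\alpha}},
$$
which is precisely \eqref{stimajacobi}.

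For the monotonicity dichotomy, the crucial observation is that the right-hand side of \eqref{stimajacobi} is \emph{strictly} positive, because $\alpha \in [1,2)$, $m>0$ and $r \geq K > 0$; hence $I$ is strictly convex on $[t_1,t_2]$ and its derivative $\dot I = r\dot r$ is strictly increasing, so it vanishes at most once. Writing everything in terms of $r$, and using $r>0$ so that $\dot r$ and $\dot I$ share the same sign, I would distinguish two cases. If $\dot I$ has constant sign on the interval, then $r$ is strictly monotone. Otherwise $\dot I$ changes sign at a unique $t^* \in (t_1,t_2)$, with $\dot I < 0$ on $[t_1,t^*)$ and $\dot I > 0$ on $(t^*,t_2]$; correspondingly $r$ is strictly decreasing before $t^*$ and strictly increasing afterwards, as claimed.

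No serious obstacle is expected: the argument is essentially the classical Lagrange--Jacobi computation, and the qualitative consequence is immediate from strict convexity. The only points requiring care are the sign bookkeeping, that is verifying that \eqref{main} reads $\ddot x = \nabla V(x)$ (with $V$ as in \eqref{defv}, so that $+$ appears rather than $-$), and checking that the constant in \eqref{stimaW2} is calibrated so that the remainder $2W(x) + \nabla W(x)\cdot x$ is \emph{strictly} dominated by the leading homogeneous term; it is exactly this strict domination that yields the strict positivity of $\ddot I$ underpinning the convexity dichotomy.
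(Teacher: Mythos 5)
Your proposal is correct and follows essentially the same route as the paper: both establish the Lagrange--Jacobi identity $\ddot I = 2V(x) + \nabla V(x)\cdot x$ (you compute it directly in Cartesian coordinates, the paper via the polar-coordinate relations \eqref{eqenergia}--\eqref{eqpolare}, a trivial difference), then split $V$ as in \eqref{vinf} so that the homogeneous part contributes $\tfrac{(2-\alpha)m}{\alpha r^{\alpha}}$ exactly, and finally absorb the remainder $2W + \nabla W\cdot x$ using \eqref{stimaW2}. Your explicit treatment of the dichotomy via strict convexity of $I$ and the sign of $\dot I = r\dot r$ is the intended (and in the paper, unstated) argument.
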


\begin{proof}
A simple computation, based on \eqref{eqenergia} and \eqref{eqpolare}, shows that
\begin{equation}\label{lj}
\ddot I(t) = 2 V(x(t)) + \nabla V(x(t)) \cdot x(t), \quad \mbox{ for every } t \in [t_1,t_2].
\end{equation}
Using \eqref{vinf}, we thus find
$$
\ddot I(t) = \frac{(2-\alpha)m}{\alpha r^\alpha(t)} + 2 W(x) + \nabla W(x(t)) \cdot x(t), \quad \mbox{ for every } t \in [t_1,t_2],
$$
and we conclude in view of \eqref{stimaW2}.
\end{proof}

As a quite direct consequence of Lemma \ref{lemlj} we can also establish the following useful corollary, which will be used various times 
in the paper.

\begin{corollary}\label{cortempi}
Let $x: [t_1,t_2] \to \mathbb{R}^3$ be a parabolic solution of \eqref{main} satisfying \eqref{largep} and assume that 
$r$ is strictly monotone on the whole $[t_1,t_2]$. Then
\begin{equation}\label{fa}
\frac{\left\vert r(t_2)^{1+\alpha/2} -
r(t_1)^{1+\alpha/2} \right\vert}{\left(1+\alpha/2 \right)\sqrt{2C_+}} \leq t_2 - t_1 \leq \sqrt{\frac{2\alpha}{(2-\alpha)m}}\max\{ r(t_2), r(t_1)\}^{1+\alpha/2},
\end{equation}
where $C_+ > 0$ is the constant fixed in Subsection \ref{stimepot}.
\end{corollary}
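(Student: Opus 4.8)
The two inequalities rest on two different structural facts, so I would prove them separately. In both cases I would first reduce to the situation where $r$ is strictly \emph{increasing}: since the equation \eqref{main} and the constraints \eqref{0en}, \eqref{largep} are invariant under the time-reversal $t \mapsto -t$, and since both $t_2-t_1$ and $\max\{r(t_1),r(t_2)\}$ are preserved by it, the strictly decreasing case follows from the increasing one. After this reduction $\max\{r(t_1),r(t_2)\} = r(t_2)$.

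For the \emph{lower} bound on $t_2-t_1$ I would use only the zero-energy relation. Dropping the nonnegative angular term in \eqref{eqenergia} gives $\dot r^2 \leq 2V(rs)$, and since $r \geq K$ I may insert the upper estimate in \eqref{stimaV} to get $\vert \dot r \vert \leq \sqrt{2C_+}\, r^{-\alpha/2}$. The key is to read this as a bound on the derivative of $r^{1+\alpha/2}$: indeed $\left\vert \tfrac{d}{dt} r^{1+\alpha/2}\right\vert = (1+\alpha/2)\, r^{\alpha/2}\, \vert \dot r \vert \leq (1+\alpha/2)\sqrt{2C_+}$. Integrating over $[t_1,t_2]$ and using the triangle inequality yields $\left\vert r(t_2)^{1+\alpha/2} - r(t_1)^{1+\alpha/2}\right\vert \leq (1+\alpha/2)\sqrt{2C_+}\,(t_2-t_1)$, which is exactly the claimed left-hand inequality after dividing.

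For the \emph{upper} bound the energy relation alone is useless, since $\dot r$ can be arbitrarily small when the angular velocity carries most of the energy; here the second-order information of Lemma \ref{lemlj} is essential. With $r$ increasing, so that $r(t) \leq r(t_2)$ throughout, the Lagrange-Jacobi inequality \eqref{stimajacobi} produces the \emph{constant} lower bound $\ddot I(t) \geq \frac{(2-\alpha)m}{2\alpha\, r(t_2)^\alpha} =: A > 0$. Moreover $\dot I(t_1) = r(t_1)\dot r(t_1) \geq 0$, precisely because $r$ is increasing. Integrating $\ddot I \geq A$ once gives $\dot I(t) \geq A(t-t_1)$, and a second integration gives $I(t_2)-I(t_1) \geq \tfrac{A}{2}(t_2-t_1)^2$. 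Bounding the left side crudely by $I(t_2) = \tfrac12 r(t_2)^2$ and solving for $t_2-t_1$ then gives $t_2-t_1 \leq \sqrt{2\alpha/((2-\alpha)m)}\, r(t_2)^{1+\alpha/2}$, the claimed right-hand inequality.

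The only delicate point I anticipate is the upper bound, and specifically the recognition that one must pass from the first-order energy identity to the second-order Lagrange-Jacobi estimate already secured in Lemma \ref{lemlj}. Once that is in place, freezing $r^\alpha$ at its maximal value $r(t_2)^\alpha$ so as to make the right-hand side of \eqref{stimajacobi} constant, together with the sign $\dot I(t_1) \geq 0$ forced by monotonicity, renders the double integration entirely routine. A minor bookkeeping matter is to state the time-reversal reduction at the outset, so that both one-sided estimates automatically cover the strictly decreasing case and the absolute value on the left of \eqref{fa} is accounted for.
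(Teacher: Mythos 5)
Your proposal is correct and follows essentially the same route as the paper: the lower bound comes from the zero-energy relation together with \eqref{stimaV} (your bound on $\tfrac{d}{dt}\,r^{1+\alpha/2}$ is just a repackaging of the paper's integration of $\dot r / r^{-\alpha/2}$), and the upper bound comes from the Lagrange--Jacobi inequality \eqref{stimajacobi} with $r^{\alpha}$ frozen at $r(t_2)^{\alpha}$, using $\dot I(t_1)\geq 0$ and integrating twice. The only cosmetic difference is that you make the time-reversal reduction and the constant $A$ explicit, whereas the paper keeps $r^{-\alpha}(s)$ inside the integral before bounding it.
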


\begin{proof}
We give the proof when $r$ is strictly increasing (the other case being analogous). At first, notice that, in view of Lemma \ref{lemlj}, 
$\dot r(t_1) \geq 0$ and $\dot r(t) > 0$ for $t \in (t_1,t_2]$.
Using the fact that $x$ has zero-energy and \eqref{stimaV}, we find
$$
0 < \dot r(t) \leq \sqrt{2C_+} r(t)^{-\alpha/2}, \quad \mbox{ for every } t \in (t_1,t_2].
$$
Hence
\begin{align*}
t_2 - t_1 & = \int_{t_1}^{t_2} \frac{\dot r(t)}{\dot r(t)}\,dt \geq \frac{1}{\sqrt{2C_+}} \int_{t_1}^{t_2} \frac{\dot r(t)}{r(t)^{-\alpha/2}}\,dt \\
& = \frac{1}{\left(1+\alpha/2 \right)\sqrt{2C_+}}\left( r(t_2)^{1+\alpha/2} -
r(t_1)^{1+\alpha/2} \right),
\end{align*}
thus proving the estimate from below. 
On the other hand, using \eqref{stimajacobi} we find, for $t \in [t_1,t_2]$, 
$$
\dot I(t) \geq \frac{(2-\alpha)m}{2\alpha} \int_{t_1}^t r^{-\alpha}(s)\,ds \geq
\frac{(2-\alpha)m}{2\alpha} \frac{t-t_1}{r^\alpha(t_2)}.
$$
Integrating on $[t_1,t_2]$, we thus have
$$
\frac{1}{2}r^2(t_2) = I(t_2) \geq \frac{(2-\alpha)m}{4\alpha} \frac{(t_2-t_1)^2}{r^\alpha(t_2)},
$$
giving the estimate from above.
\end{proof}

The next result gives an estimate for the angular momentum
$$
A(t) = x(t) \wedge \dot x(t), \quad \mbox{ for every } t \in [t_1,t_2].
$$
Notice that, in the coordinates \eqref{polar1}, 
\begin{equation}\label{momentopolare}
\quad \vert A(t) \vert = r^2(t) \vert \dot s(t) \vert. 
\end{equation}

\begin{lemma}\label{proppara}
Let $x: [t_1,t_2] \to \mathbb{R}^3$ be a parabolic solution of \eqref{main} satisfying \eqref{largep}.
Then
\begin{equation}\label{stimaapunto}
\vert \dot A(t) \vert \leq \frac{C_+}{r^{\alpha+2}(t)}, \quad \mbox{ for every } t \in [t_1,t_2],
\end{equation}
where $C_+ > 0$ is the constant fixed in Subsection \ref{stimepot}.
\end{lemma}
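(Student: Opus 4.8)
The plan is to differentiate the angular momentum directly and to exploit the fact that only the non-central part $W$ of the potential can alter it. First I would observe that
$$
\dot A(t) = \dot x(t) \wedge \dot x(t) + x(t) \wedge \ddot x(t) = x(t) \wedge \ddot x(t),
$$
since the wedge product of a vector with itself vanishes. Next, recalling that the force appearing in \eqref{main} is exactly the gradient of the potential $V$ defined in \eqref{defv}, i.e.\ $\ddot x = \nabla V(x)$, I would substitute to obtain $\dot A(t) = x(t) \wedge \nabla V(x(t))$.

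The crucial step is then to invoke the splitting \eqref{vinf}, namely $V(x) = \frac{m}{\alpha \vert x \vert^{\alpha}} + W(x)$. A direct computation gives $\nabla\left(\frac{m}{\alpha \vert x \vert^{\alpha}}\right) = - \frac{m\, x}{\vert x \vert^{\alpha+2}}$, which is parallel to $x$; hence its wedge product with $x$ is zero, and we are left with $\dot A(t) = x(t) \wedge \nabla W(x(t))$. This is the heart of the matter: the fully $-\alpha$-homogeneous central part of the potential produces no torque and thus preserves the angular momentum exactly, so that only the perturbation $W$ — which, thanks to the center-of-mass normalization \eqref{centromassa}, decays faster — can generate a nonzero $\dot A$.

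Finally I would estimate, using the elementary inequality $\vert a \wedge b \vert \leq \vert a \vert\, \vert b \vert$ together with the assumption \eqref{largep} (which guarantees $r(t) \geq K$, so that the gradient bound \eqref{stimaW} is applicable):
$$
\vert \dot A(t) \vert = \vert x(t) \wedge \nabla W(x(t)) \vert \leq \vert x(t) \vert \, \vert \nabla W(x(t)) \vert \leq r(t) \cdot \frac{C_+}{r(t)^{\alpha+3}} = \frac{C_+}{r(t)^{\alpha+2}},
$$
which is precisely \eqref{stimaapunto}.

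I do not expect any genuine obstacle here: the argument is a short computation once one recognizes that the radial component of the force exerts no torque about the origin. The only point requiring a word of care is verifying that \eqref{stimaW} may indeed be used at every $t \in [t_1,t_2]$, which is ensured precisely by the largeness hypothesis \eqref{largep}.
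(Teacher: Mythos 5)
Your proof is correct and follows exactly the same route as the paper's: differentiating $A$, using $\ddot x = \nabla V(x)$ together with the splitting \eqref{vinf} so that the central term (being parallel to $x$) drops out of the wedge product, and then applying \eqref{stimaW}, which is legitimate thanks to \eqref{largep}. The paper's proof is just a terser version of precisely this computation.
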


\begin{proof}
Since
$$
\dot A(t) = x(t) \wedge \nabla V(x(t)) = x(t) \wedge \nabla W(x(t)), \quad \mbox{ for every } t \in [t_1,t_2], 
$$
the conclusion follows from \eqref{stimaW}.
\end{proof}

Now we are in position to prove the main result of this section, giving asymptotic estimates for large parabolic solutions defined on the half-line
$[t_1,+\infty)$. Of course, a symmetric result holds for solutions defined on $(-\infty,t_2]$.

\begin{proposition}\label{asintotica}
Let $x: [t_1,+\infty) \to \mathbb{R}^3$ be a parabolic solution of \eqref{main} such that $\vert x(t) \vert \geq K$ for any $t \in [t_1,+\infty)$.
Then
\begin{equation}\label{rasy}
r(t) \sim \gamma_{\alpha,m} \, t\,^{\frac{2}{2+\alpha}},    
\qquad t  \to +\infty,
\end{equation}
where
\begin{equation}\label{gamdef}
\gamma_{\alpha,m} = \left(\sqrt{\frac{m}{2\alpha}} (2+\alpha)\right)^{\frac{2}{2+\alpha}},
\end{equation}
and there exists $\xi \in \mathbb{S}^2$ such that
$$
\lim_{t \to +\infty} s(t) = \xi.
$$
\end{proposition}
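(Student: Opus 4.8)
The plan is to first upgrade the qualitative information coming from the Lagrange--Jacobi inequality into the statement that $r(t)\to+\infty$, then to extract the correct power $t^{2/(2+\alpha)}$ via a crude two-sided estimate, and finally to pin down both the sharp multiplicative constant $\gamma_{\alpha,m}$ and the existence of the asymptotic direction $\xi$ by showing that the angular part of the motion is negligible.

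\emph{Step 1: $r(t)\to+\infty$ and eventual monotonicity of $r$.} By \eqref{stimajacobi} the function $I$ is strictly convex, so $\dot I$ is strictly increasing and admits a limit. If $\dot I(t_0)>0$ for some $t_0$, then $\dot I(t)\geq\dot I(t_0)>0$ for $t\geq t_0$ and hence $I(t)\to+\infty$. Otherwise $\dot I\leq 0$ on the whole half-line, so $I$ is non-increasing and bounded below by $K^2/2$; it would then converge, forcing $r$ to stay bounded and, through \eqref{stimajacobi}, $\ddot I$ to be bounded below by a positive constant, which contradicts $\dot I\leq 0$. Thus $I(t)\to+\infty$, i.e. $r(t)\to+\infty$, and (again by the convexity provided by Lemma \ref{lemlj}) $r$ is strictly increasing on a half-line $[\bar t,+\infty)$.

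\emph{Step 2: crude rate and convergence of the angular momentum.} Applying Corollary \ref{cortempi} on each interval $[\bar t,T]$ with $T>\bar t$ and letting $r(T)\to+\infty$, the two bounds in \eqref{fa} give constants $0<c_1\leq c_2$ with $c_1\,t^{2/(2+\alpha)}\leq r(t)\leq c_2\,t^{2/(2+\alpha)}$ for $t$ large. Inserting this into \eqref{stimaapunto} yields $\vert\dot A(t)\vert\leq C_+ r^{-\alpha-2}\leq C\,t^{-2}$, since $2(\alpha+2)/(2+\alpha)=2$; hence $\dot A\in L^1$, the angular momentum $A(t)$ converges to some $A_\infty\in\R^3$, and in particular $\vert A\vert$ is bounded.

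\emph{Step 3: existence of $\xi$ and the sharp constant.} From \eqref{momentopolare}, $\vert\dot s(t)\vert=\vert A(t)\vert/r^2(t)\leq C\,t^{-4/(2+\alpha)}$, and since $\alpha<2$ we have $4/(2+\alpha)>1$, so $\dot s\in L^1([\bar t,+\infty))$ and $s(t)\to\xi$ for some $\xi\in\mathbb{S}^2$. Moreover the boundedness of $\vert A\vert$ gives $r^2\vert\dot s\vert^2=\vert A\vert^2/r^2=O(r^{-2})$, which is $o(r^{-\alpha})$ precisely because $\alpha<2$. Plugging this, together with \eqref{vinf}, \eqref{stimaV} and $\dot r>0$, into the energy relation \eqref{eqenergia}, I obtain $\dot r=\sqrt{2m/\alpha}\,r^{-\alpha/2}(1+o(1))$, equivalently $\tfrac{d}{dt}r^{(2+\alpha)/2}=\tfrac{2+\alpha}{2}\sqrt{2m/\alpha}\,(1+o(1))$. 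Integrating and comparing leading terms gives $r^{(2+\alpha)/2}\sim\tfrac{2+\alpha}{2}\sqrt{2m/\alpha}\,t$, that is \eqref{rasy} with $\gamma_{\alpha,m}$ as in \eqref{gamdef}, once one checks $\tfrac{2+\alpha}{2}\sqrt{2m/\alpha}=(2+\alpha)\sqrt{m/(2\alpha)}$.

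The main obstacle is the apparent circularity: controlling the angular momentum (Step 2) requires a rate for $r$, while the sharp rate (Step 3) requires the angular term to be negligible. The crude polynomial bound from Corollary \ref{cortempi} is exactly what breaks this loop. After that, the hypothesis $\alpha<2$ enters twice in an essential way: it makes $\dot s$ integrable, so that the limiting direction $\xi$ exists, and it guarantees that the centrifugal contribution $r^2\vert\dot s\vert^2$ is of strictly lower order than the potential $V$, which is what isolates the exact constant $\gamma_{\alpha,m}$.
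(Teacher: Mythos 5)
Your proposal is correct and follows essentially the same route as the paper: Lagrange--Jacobi to get $r\to\infty$ and eventual monotonicity, Corollary \ref{cortempi} for the crude polynomial bound on $r$, integrability of $\dot A$ to bound the angular momentum, and the zero-energy relation to extract $\dot r = \sqrt{2m/\alpha}\,r^{-\alpha/2}(1+o(1))$, which is exactly the paper's statement that $\Gamma(t)=r^{\alpha}\dot r^{2}\to 2m/\alpha$. The only cosmetic differences are that you obtain the limit direction $\xi$ from the crude bound before pinning down the sharp constant (the paper does it afterwards, using \eqref{rasy}), and you integrate $\tfrac{d}{dt}r^{(2+\alpha)/2}$ by hand where the paper invokes de l'Hopital's rule---the same computation.
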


\begin{proof}
From Lemma \ref{lemlj} we immediately deduce that $r(t) \to +\infty$ for $t \to +\infty$ (monotonically);
from this fact together with Corollary \ref{cortempi} we infer the existence of $T_1 > t_1$ such that 
\begin{equation}\label{stimar}
r^{\alpha+2}(t) \geq \frac{(2-\alpha)m}{2\alpha}(t-T_1)^2, \quad \mbox{ for every } t > T_1.
\end{equation}
Therefore $\vert \dot A \vert$ is integrable at infinity, so that
\begin{equation}\label{alimitato}
\limsup_{t \to +\infty} \vert A(t) \vert < \infty.
\end{equation}
Now, we define the function
\begin{equation}\label{gammadef}
\Gamma(t) = r^{\alpha}(t) \dot r^2(t), \qquad t > T_1,
\end{equation}
and we observe that, in view of \eqref{vinf}, \eqref{eqenergia} and \eqref{momentopolare},
$$
\Gamma(t) = \frac{2m}{\alpha} + r^\alpha(t) W(x(t)) - \frac{\vert A(t) \vert^2}{r^{2-\alpha}(t)}.
$$
Using the fact that $r(t) \to \infty$ and \eqref{stimaW} and \eqref{alimitato}, we thus obtain
$$
\lim_{t \to +\infty}\Gamma(t) = \frac{2m}{\alpha}.
$$
An application of de l'Hopital's rule (compare with the proof of \cite[Theorem 7.7]{BarTerVer14}) then yields \eqref{rasy}.

As for $s$, we observe that, as a consequence of \eqref{momentopolare}, \eqref{rasy} and \eqref{alimitato},
there exist $a_0>0$ and $T_2 > T_1$ such that
$$
\vert \dot s(t) \vert \leq \frac{a_0}{t^{4/(\alpha+2)}}, \quad \mbox{ for every } t > T_2. 
$$
Therefore $\vert \dot s \vert$ is integrable at infinity, implying that $s$ admits a limit for $t \to +\infty$.
\end{proof}

We conclude this section with a further technical estimate, which will play an important role in the proof of Proposition \ref{approssimazione}.

\begin{lemma}\label{tecnico}
Let $x: [t_1,t_2] \to \mathbb{R}^3$ be a parabolic solution of \eqref{main} satisfying \eqref{largep} and assume that 
$r$ is strictly increasing on the whole $[t_1,t_2]$. Then, for any $\tau_1,\tau_2$ with $t_1 < \tau_1 \leq \tau_2 \leq t_2$,
\begin{equation}\label{fe}
\int_{\tau_2}^{t_2} \vert \dot s(t) \vert\,dt \leq  
\frac{C_1 \,r(\tau_1)^{1-\alpha/2} +\frac{C_2}{r(\tau_1)^{1+\alpha/2} - r(t_1)^{1+\alpha/2}}}
{C_3 \frac{r(\tau_1)^{1+\alpha/2} - r(t_1)^{1+\alpha/2}}{r(\tau_1)^{3(\alpha+2)/4}}}
 \frac{C_4}{r(\tau_2)^{(2-\alpha)/4}},
\end{equation}
where $C_j$ ($j=1,\ldots,4$) are positive constants depending only on $\alpha,m$ and $C_+$.
\end{lemma}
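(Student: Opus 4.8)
The plan is to rewrite everything in terms of the angular momentum and then to control it. By \eqref{momentopolare} one has $|\dot s(t)| = |A(t)|/r^2(t)$, hence
\[
\int_{\tau_2}^{t_2}|\dot s(t)|\,dt = \int_{\tau_2}^{t_2}\frac{|A(t)|}{r^2(t)}\,dt .
\]
So it suffices to combine a uniform bound for $|A|$ on $[\tau_1,t_2]$ with a decay estimate for $\int_{\tau_2}^{t_2}r^{-2}\,dt$. The two structural facts I would lean on are both already available: the almost-conservation of angular momentum \eqref{stimaapunto}, $|\dot A|\le C_+ r^{-\alpha-2}$, whose right-hand side is integrable at infinity because $r$ grows, and the time--radius inequalities \eqref{fa} of Corollary \ref{cortempi}. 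Set $\Delta := r(\tau_1)^{1+\alpha/2}-r(t_1)^{1+\alpha/2}$, so that \eqref{fa} reads $\tau_1-t_1\ge\Delta/((1+\alpha/2)\sqrt{2C_+})$; this $\Delta$ is exactly the quantity appearing in \eqref{fe}.

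The heart of the argument, and the step I expect to be the real obstacle, is the uniform bound for $|A|$. Since in space $A$ is not conserved, I cannot read it off at a single point; instead I would first extract a good instant by averaging. By the mean value theorem for integrals there is $\sigma\in[t_1,\tau_1]$ with $|\dot s(\sigma)| = (\tau_1-t_1)^{-1}\int_{t_1}^{\tau_1}|\dot s|\,dt$. The zero-energy relation \eqref{eqenergia} together with \eqref{stimaV} bounds the angular speed, $|\dot s|\le\sqrt{2C_+}\,r^{-1-\alpha/2}$, and hence $|A| = r^2|\dot s|\le\sqrt{2C_+}\,r^{1-\alpha/2}$; feeding these into the average and converting back to $|A(\sigma)| = r(\sigma)^2|\dot s(\sigma)|$, with $r(\sigma)\le r(\tau_1)$, produces a bound for $|A(\sigma)|$ whose leading part is the numerator term $C_1 r(\tau_1)^{1-\alpha/2}$ divided by the conversion factor $C_3\,\Delta\, r(\tau_1)^{-3(\alpha+2)/4}$ in the denominator, the powers of $r(\tau_1)$ and the denominator $\Delta$ being dictated by \eqref{fa} and by the exponents in the energy bound.

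I would then propagate this forward using the slow variation of $A$: for every $t\in[\tau_1,t_2]$,
\[
|A(t)|\le |A(\sigma)|+\int_{\sigma}^{t}|\dot A|\,ds\le |A(\sigma)|+C_+\int_{t_1}^{t_2}\frac{ds}{r^{\alpha+2}(s)},
\]
and the last integral, estimated by the change of variables $s\mapsto r$ and \eqref{fa}, contributes precisely the correction term $C_2/\Delta$ in the numerator. Taking the supremum over $t\in[\tau_1,t_2]$ then bounds $\sup_{[\tau_1,t_2]}|A|$ by the radius-dependent fraction in \eqref{fe}. The delicate points here are to keep every dependence on $r(t_1)$ and $r(\tau_1)$ explicit, and the mild bootstrap whereby the control just obtained on $|A|$ is what legitimises the lower bound on $\dot r$ used in the change of variables (valid since $r\ge K$ is large by \eqref{largep}).

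Finally I would insert the uniform bound into the tail and estimate the remaining integral. From \eqref{eqenergia} and \eqref{stimaV}, once $|A|$ is controlled and $r$ is large, one has $\dot r\ge c\,r^{-\alpha/2}$, so that again by $s\mapsto r$
\[
\int_{\tau_2}^{t_2}\frac{ds}{r^2(s)}\le \frac{1}{c}\int_{r(\tau_2)}^{+\infty}r^{\alpha/2-2}\,dr = C\,r(\tau_2)^{-(2-\alpha)/2}\le C_4\,r(\tau_2)^{-(2-\alpha)/4},
\]
the last (deliberately non-optimal) inequality holding since $r(\tau_2)\ge K$ is bounded below. Multiplying this decay factor by the uniform bound for $|A|$ obtained above gives exactly the right-hand side of \eqref{fe}, completing the proof.
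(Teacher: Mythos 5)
Your skeleton is the same as the paper's: write $\int_{\tau_2}^{t_2}\vert\dot s\vert\,dt=\int_{\tau_2}^{t_2}\vert A\vert/r^2\,dt$, bound $\sup_{[\tau_1,t_2]}\vert A\vert$ by its size near $\tau_1$ plus $\int\vert\dot A\vert$ via \eqref{stimaapunto}, then estimate the tail integral of $r^{-2}$. (Incidentally, the mean-value/averaging detour is pointless: the bound $\vert A(t)\vert\le\sqrt{2C_+}\,r(t)^{1-\alpha/2}$ coming from \eqref{eqenergia} and \eqref{stimaV} is pointwise, so you can evaluate directly at $t=\tau_1$, exactly as the paper does.) The first genuine gap is your claim that $\int_{t_1}^{t_2}r^{-\alpha-2}\,ds\le C_2/\Delta$ ``by the change of variables $s\mapsto r$ and \eqref{fa}''. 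The change of variables requires a positive lower bound on $\dot r$, which is unavailable near $t_1$: strict monotonicity of $r$ still allows $\dot r(t_1)=0$ (a pericenter). Worse, the claimed inequality is simply false: if $\dot r(t_1)=0$ and $r(t_1)=K$, the portion of the integral over $[t_1,t_1+1]$ is bounded below by a positive constant independent of $\tau_1$, while $C_2/\Delta\to 0$ as $r(\tau_1)\to+\infty$. The paper avoids this by integrating $\vert\dot A\vert$ only from $\tau_1$ onward, and the lower bound it needs for $\dot I=r\dot r$ on $[\tau_1,t_2]$ does \emph{not} come from energy plus angular-momentum control: it comes from the Lagrange--Jacobi inequality \eqref{stimajacobi} (convexity of $I$, hence $\dot I$ increasing) integrated over $[t_1,\tau_1]$, combined with \eqref{fa} --- which is precisely where $\Delta=r(\tau_1)^{1+\alpha/2}-r(t_1)^{1+\alpha/2}$ and the hypothesis $\tau_1>t_1$ enter.

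The second gap is the crux, and it is the step you yourself flagged as a ``mild bootstrap'': the bound $\dot r\ge c\,r^{-\alpha/2}$ on $[\tau_2,t_2]$ does not follow from control of $\vert A\vert$ together with $r\ge K$. The energy relation gives $\dot r^2=2V-\vert A\vert^2/r^2$, and with $\vert A\vert\approx\sqrt{2C_+}\,r(\tau_1)^{1-\alpha/2}$ and $C_-r^{-\alpha}\le V\le C_+r^{-\alpha}$, the right-hand side is bounded below by a multiple of $r^{-\alpha}$ only when $r\gtrsim(2C_+/C_-)^{1/(2-\alpha)}\,r(\tau_1)$ --- a largeness condition \emph{relative to} $r(\tau_1)$, not the absolute condition $r\ge K$ you invoke. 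Concretely, take $\tau_1=\tau_2$ just after a pericenter $t_1$: then $\dot r(\tau_2)$ is arbitrarily small while $r(\tau_2)\ge K$, so no admissible constant $c$ exists, and the intermediate region $r(\tau_2)\le r\lesssim r(\tau_1)$ is left with no estimate at all. The paper's device for exactly this difficulty is the monotonicity of $X(t)=I(t)^{(\alpha-2)/8}\,\dot I(t)$ (a refined Lagrange--Jacobi computation relying on \eqref{stimaW2}), which yields a lower bound of the form $\dot I(t)\ge c\,\Delta\,r(\tau_1)^{-3(\alpha+2)/4}\,I(t)^{(2-\alpha)/8}$ on all of $[\tau_1,t_2]$; this single inequality is the source of both the denominator of \eqref{fe} and the decay factor $C_4\,r(\tau_2)^{-(2-\alpha)/4}$, which your write-up asserts (the ``conversion factor'') but never derives. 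A repair in your spirit is possible without any pointwise lower bound on $\dot r$: split $[\tau_2,t_2]$ into dyadic shells $\{2^k r(\tau_2)\le r\le 2^{k+1}r(\tau_2)\}$, bound the time spent in each shell by Corollary \ref{cortempi}, and sum the geometric series; the same trick bounds $\int_{\tau_1}^{t_2}\vert\dot A\vert$ by a constant, absorbable into $C_1r(\tau_1)^{1-\alpha/2}$ since $r(\tau_1)\ge K$ (at the price of constants also involving $K$). But that is a different argument from the one you wrote.
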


Clearly, a symmetric result can be given when $r$ is strictly decreasing on $[t_1,t_2]$.

\begin{proof}
At first, we observe that, using \eqref{stimajacobi} and Corollary \ref{cortempi}, we find
\begin{align}\label{passo1}
\dot I(\tau_1) &\geq \frac{(2-\alpha)m}{2\alpha} \int_{t_1}^{\tau_1} r^{-\alpha}(s)\,ds \geq
\frac{(2-\alpha)m}{2\alpha} \frac{\tau_1-t_1}{r^\alpha(\tau_1)} \nonumber \\
& \geq 
\frac{(2-\alpha)m}{2\alpha} \frac{1}{(1+\alpha/2) \sqrt{2C_+}}\frac{1}{r^\alpha(\tau_1)}(r(\tau_1)^{1+\alpha/2}-r(t_1)^{1+\alpha/2}). 
\end{align}
Next, for any $t \geq \tau_1$ we define
$$
X(t) = I(t)^{\frac{\alpha-2}{8}} \,\dot I(t)
$$
and we claim that $X$ is increasing on $[\tau_1,t_2]$. Indeed, observing that
$$
\dot I^2(t) = 2 I(t) \left( \vert \dot x(t) \vert^2 - r^2(t) \vert \dot s(t) \vert^2 \right) = 4 I(t) V(x(t)) - 4 I^2(t) \vert \dot s(t) \vert^2, 
$$
and recalling \eqref{lj}, we find, for every $t \in [\tau_1,t_2]$,
\begin{align*}
\dot X(t) & = \frac{\alpha-2}{8} I(t)^{\frac{\alpha-2}{8}-1} \dot I^2(t) + I(t)^{\frac{\alpha-2}{8}} \ddot I(t) \\
& = I(t)^{\frac{\alpha-2}{8}} \left(\left(1+\frac{\alpha}{2}\right) V(x(t)) + \nabla V(x(t)) \cdot x(t) + \left(1-\frac{\alpha}{2}\right) I(t) \vert \dot s(t)\vert^2 \right) \\
& \geq I(t)^{\frac{\alpha-2}{8}} \left(\left(1-\frac{\alpha}{2}\right) \frac{m}{\alpha r(t)} + \left(1+\frac{\alpha}{2}\right) W(x(t)) + \nabla W(x(t)) \cdot x(t)\right) > 0,
\end{align*}
in view of \eqref{stimaW2}. As a consequence, recalling \eqref{passo1},
\begin{align*}
I(t)^{\frac{\alpha-2}{8}} \,\dot I(t) & \geq I(\tau_1)^{\frac{\alpha-2}{8}} \,\dot I(\tau_1) \\
& \geq \frac{C_3}{r(\tau_1)^{\frac{3\alpha+2}{4}}} \left( r(\tau_1)^{1+\alpha/2}-r(t_1)^{1+\alpha/2} \right), \quad \mbox{ for every } t \in [\tau_1,t_2],
\end{align*}
where 
$$
C_3 = \frac{(2-\alpha)m}{2^{(14-\alpha)/8}\alpha} \frac{1}{(1+\alpha/2) \sqrt{C_+}}.
$$
Summing up,
\begin{equation}\label{passo2}
\dot I(t) \geq \frac{C_3}{r(\tau_1)^{\frac{3\alpha+2}{4}}} \left( r(\tau_1)^{1+\alpha/2}-r(t_1)^{1+\alpha/2} \right)I(t)^{\frac{2-\alpha}{8}}, \quad \mbox{ for every } t \in [\tau_1,t_2].  
\end{equation}
Now, we write \eqref{stimaapunto} as
$$
\vert \dot A(t) \vert \leq \frac{1}{2^{\frac{\alpha+2}{2}}} \frac{C_+}{I(t)^{1+\alpha/2}}, \quad \mbox{ for every } t \in [\tau_1,t_2].
$$
Recalling \eqref{passo1} and the fact that $\dot I$ is increasing, we find, for $t \in [\tau_1,t_2]$,
$$
\vert \dot A(t) \vert \leq \frac{\alpha \left( 1 +\frac{\alpha}{2} \right) C_+ \sqrt{2C_+}}{2^{\alpha/2}(2-\alpha)m} \frac{r^\alpha(\tau_1)}{r(\tau_1)^{1+\alpha/2}-r(t_1)^{1+\alpha/2}}
\frac{\dot I(t)}{I(t)^{1+\alpha/2}},
$$
so that
$$
\int_{\tau_1}^{t} \vert \dot A(s) \vert \,ds \leq \frac{C_2}{r(\tau_1)^{1+\alpha/2}-r(t_1)^{1+\alpha/2}}, \quad \mbox{ for every } t \in [\tau_1,t_2],
$$
where
$$
C_2 = \frac{2 \left( 1 +\frac{\alpha}{2} \right) C_+ \sqrt{2C_+}}{(2-\alpha)m}.
$$
Therefore, using the energy relation and \eqref{stimaV}, for every $t \in [\tau_1,t_2]$,
\begin{align*}
\vert A(t) \vert & \leq \vert A(\tau_1) \vert + \frac{C_2}{r(\tau_1)^{1+\alpha/2}-r(t_1)^{1+\alpha/2}} \\
& \leq  C_1 r(\tau_1)^{1-\alpha/2} + \frac{C_2}{r(\tau_1)^{1+\alpha/2}-r(t_1)^{1+\alpha/2}},
\end{align*}
where $C_1 = \sqrt{2C_+}$
Recalling \eqref{momentopolare}, we thus find
$$
\vert \dot s(t) \vert \leq \left( C_1 r(\tau_1)^{1-\alpha/2} + \frac{C_2}{r(\tau_1)^{1+\alpha/2}-r(t_1)^{1+\alpha/2}}\right) \frac{1}{2 I(t)}, \quad \mbox{ for every } t \in [\tau_1,t_2].
$$
Combining this estimate with \eqref{passo2}, we obtain, for $\tau_2 \in [\tau_1,t_2]$,
\begin{align*}
\int_{\tau_2}^{t_2} \vert \dot s(t) \vert \,dt & \leq \frac{C_1 r(\tau_1)^{1-\alpha/2} + \frac{C_2}{r(\tau_1)^{1+\alpha/2} - r(t_1)^{1+\alpha/2}}}
{C_3 \frac{r(\tau_1)^{1+\alpha/2} - r(t_1)^{1+\alpha/2}}{r(\tau_1)^{3(\alpha+2)/4}}} \frac{1}{2} \int_{\tau_2}^{t_2} \frac{\dot I(t)}{I(t)^{1+(2-\alpha)/8}}\,dt \\
& \leq \frac{C_1 r(\tau_1)^{1-\alpha/2} + \frac{C_2}{r(\tau_1)^{1+\alpha/2} - r(t_1)^{1+\alpha/2}}}
{C_3 \frac{r(\tau_1)^{1+\alpha/2} - r(t_1)^{1+\alpha/2}}{r(\tau_1)^{3(\alpha+2)/4}}}
\frac{C_4}{r(\tau_2)^{(2-\alpha)/4}},
\end{align*}
where $C_4 = 2^{(\alpha+14)/8}/(2-\alpha)$. The proof is thus concluded.
\end{proof}

\section{The approximation argument}\label{sez3}

In this section, given $\xi^+, \xi^- \in \mathbb{S}^2$, we present a result showing how an
entire parabolic solution of \eqref{main} satisfying \eqref{asintoticast} can be obtained as limit of parabolic solutions 
defined on compact intervals, provided suitable assumptions are satisfied. Notice that in this section the hypothesis
$\xi^+ \neq \xi^-$ is not needed.

\begin{proposition}\label{approssimazione}
Let $\xi^+, \xi^- \in \mathbb{S}^2$.
Suppose that, for every large $R > 0$, there exists a parabolic solution $x_R:[-\omega_R,\omega_R] \to \mathbb{R}^3 \setminus \Sigma$
of \eqref{main} satisfying $x_R(-\omega_R) = R\xi^-$, $x_R(\omega_R) = R\xi^+$,
\begin{equation}\label{limitesup}
\limsup_{R \to +\infty} \,\min_t \vert x_R(t) \vert < +\infty
\end{equation}
and
\begin{equation}\label{limiteinf}
\liminf_{R \to +\infty} \,\min_t \vert x_R(t) - c_i \vert > 0, \qquad \forall \, i =1,\ldots,N.
\end{equation} 
Finally, in the case $\min_t \vert x_R(t) \vert < K$, suppose further that 
\begin{equation}\label{stimatempi}
\limsup_{R \to +\infty}\left( t^+_R - t^-_R \right) < +\infty,
\end{equation}
being $K > \Xi + 1$ the constant fixed in Subsection \ref{stimepot} and $t^-_R < t^+_R$ the unique instants such that 
$\vert x_R(t^\pm_R) \vert = K$ and $\vert x_R(t) \vert > K$ for $t < t_R^-$ and $t > t^+_R$.

Then, there exists a parabolic solution
$x_\infty: \mathbb{R} \to \mathbb{R}^3 \setminus \Sigma$ of \eqref{main} such that,
writing $x_\infty = r_\infty s_\infty$ as in \eqref{polar1},
$$
r_\infty(t) \sim \gamma_{\alpha,m} \, \vert t \vert^{\frac{2}{2+\alpha}}, \qquad t \to \pm\infty,
$$
with $\gamma_{\alpha,m}$ given by \eqref{gamdef}, and 
$$
\lim_{t \to \pm \infty} s_{\infty}(t) = \xi^{\pm}.
$$
\end{proposition}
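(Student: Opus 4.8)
The plan is to realize the entire solution as a locally uniform limit of the family $(x_R)$ after a time-translation, and then to invoke the qualitative results of Section \ref{sez2} to pin down its behaviour at infinity. First I would recenter time at the instant $\tau_R$ of closest approach, setting $y_R(t)=x_R(t+\tau_R)$, so that $|y_R(0)|=\min_t|x_R(t)|$. By \eqref{limitesup} the points $y_R(0)$ stay in a fixed ball, and by \eqref{limiteinf} they remain at a uniform distance $\delta_0>0$ from $\Sigma$; hence they lie in a fixed compact subset of $\mathbb{R}^3\setminus\Sigma$, on which $V$ is bounded. The zero-energy relation $\tfrac12|\dot y_R(0)|^2=V(y_R(0))$ then bounds the velocities, so along a subsequence $(y_R(0),\dot y_R(0))\to(p_0,v_0)$ with $p_0\notin\Sigma$ and $\tfrac12|v_0|^2=V(p_0)$. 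I let $x_\infty$ be the maximal solution of \eqref{main} issuing from these data; by continuity of the energy it is parabolic.

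Next I would propagate the convergence. By continuous dependence, $y_R\to x_\infty$ in $C^1$ on every compact subinterval of the maximal existence interval on which $x_\infty$ avoids $\Sigma$, and the shifted intervals $[-\omega_R-\tau_R,\omega_R-\tau_R]$ exhaust $\mathbb{R}$, since Corollary \ref{cortempi} forces $\omega_R-t_R^+\to+\infty$ (and symmetrically on the left) while $t_R^+-\tau_R$ stays bounded (trivially when $\min_t|x_R|\geq K$, and by \eqref{stimatempi} otherwise). The bounded-excursion-time hypothesis \eqref{stimatempi} confines the whole inner part $\{|y_R|\leq K\}$ to a fixed window $[-T_0,T_0]$, on which, by \eqref{limiteinf}, $y_R$ ranges in the compact set $\overline{B_K(0)}\setminus\bigcup_i B_{\delta_0}(c_i)$; hence $x_\infty$ is collision-free there. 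Writing $\sigma^+=\lim_R(t_R^+-\tau_R)\in[0,T_0]$ (with $\sigma^+=0$ when there is no inner excursion), the limit obeys $r_\infty(\sigma^+)=K$ and $\dot r_\infty(\sigma^+)\geq 0$. For $t\geq\sigma^+$, as long as $r_\infty\geq K$, the Lagrange--Jacobi inequality \eqref{stimajacobi} makes $I_\infty$ strictly convex, and since $\dot I_\infty(\sigma^+)=r_\infty\dot r_\infty(\sigma^+)\geq 0$ this forces $r_\infty$ to increase strictly and to stay above $K$; thus no further collision can occur and, by Corollary \ref{cortempi}, the solution exists for all $t\geq\sigma^+$. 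Proposition \ref{asintotica} then delivers $r_\infty(t)\sim\gamma_{\alpha,m}\,t^{2/(2+\alpha)}$ together with a limit direction $\xi^{(+)}:=\lim_{t\to+\infty}s_\infty(t)$.

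The crucial and most delicate step, and the one I expect to be the main obstacle, is to identify $\xi^{(+)}$ with the prescribed $\xi^+$: this requires interchanging the limits $R\to+\infty$ and $t\to+\infty$, and it is precisely here that Lemma \ref{tecnico} is used. On the outer increasing branch I would apply \eqref{fe} with $t_1$ the crossing of level $K$ and $\tau_1$ the crossing of the fixed level $2K$, so that the large prefactor reduces to a constant $\Lambda=\Lambda(K,\alpha,m,C_+)$ and $\int_{\tau_2}^{t_2}|\dot s_R|\,dt\leq \Lambda\, C_4\, r_R(\tau_2)^{-(2-\alpha)/4}$ \emph{uniformly in} $R$. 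Given $\varepsilon>0$, I choose $\rho$ so large that this bound is $<\varepsilon$ whenever $r_R(\tau_2)\geq\rho$; letting $\bar t_R$ be the instant with $r_R(\bar t_R)=\rho$ and recalling $s_R(\omega_R-\tau_R)=\xi^+$, the angular estimate gives $|s_R(\bar t_R)-\xi^+|<\varepsilon$. Passing to the limit (the $\bar t_R$ converge to the finite $\bar t_\infty$ where $r_\infty=\rho$, and $s_R(\bar t_R)\to s_\infty(\bar t_\infty)$) and then letting $\rho\to+\infty$ yields $\xi^{(+)}=\xi^+$; the entirely symmetric argument for $t\to-\infty$ gives $a_\infty=-\infty$, the matching radial asymptotics, and $\lim_{t\to-\infty}s_\infty(t)=\xi^-$. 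Without the uniform-in-$R$ decay of the angular variation furnished by Lemma \ref{tecnico}, the asymptotic direction could drift away from $\xi^\pm$ as $R\to+\infty$, so that estimate is the heart of the argument.
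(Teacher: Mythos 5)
Your proposal follows the same strategy as the paper's own proof: recentre time near the closest approach, pass to a locally uniform limit, use the Lagrange--Jacobi inequality (Lemma \ref{lemlj}), Corollary \ref{cortempi} and Proposition \ref{asintotica} to obtain global existence, absence of collisions and the radial asymptotics, and finally identify the asymptotic directions by applying Lemma \ref{tecnico} with a prefactor that is \emph{uniform in} $R$ --- which is exactly the content of the paper's Claim 4. The differences in detail (centring at the minimum of $\vert x_R \vert$ rather than at the midpoint of $[t_R^-,t_R^+]$, and invoking continuous dependence on initial data instead of a priori $\mathcal{C}^2$ bounds plus Ascoli--Arzel\`a) are inessential.

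There is, however, one step that fails as written: your choice of levels in the application of Lemma \ref{tecnico}. You take $t_1$ at the crossing of level $K$ and $\tau_1$ at the crossing of the fixed level $2K$. But hypothesis \eqref{limitesup} only guarantees $\min_t \vert x_R(t) \vert \leq \tilde K$ for some finite $\tilde K$, and it is perfectly admissible that $\min_t \vert x_R(t) \vert \geq K$ for every $R$ --- in which case $r_R$ never crosses level $K$ --- or even that $\min_t \vert x_R(t) \vert$ is close to, or above, $2K$, in which case either the level $2K$ is never attained on the increasing branch, or the gap $r(\tau_1)^{1+\alpha/2} - r(t_1)^{1+\alpha/2}$ appearing in the denominator of \eqref{fe} becomes arbitrarily small; in both situations the uniformity in $R$ of your constant $\Lambda$, which is the whole point of the step, is lost. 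The remedy is the paper's choice: set $\tilde K_R = \max\{K, \min_t \vert x_R(t)\vert\} \in [K,\tilde K]$, take $t_1$ at the start of the increasing branch (where $r_R = \tilde K_R$) and $\tau_1$ at the crossing of level $\tilde K_R + 1$; since $(\tilde K_R+1)^{1+\alpha/2} - \tilde K_R^{1+\alpha/2}$ is bounded away from zero and $\tilde K_R + 1 \leq \tilde K + 1$, the prefactor in \eqref{fe} is then bounded in terms of $K$, $\tilde K$, $\alpha$, $m$, $C_+$ alone. With this substitution (and the accompanying use of \eqref{fa} to keep the crossing instants of any fixed level $\rho$ bounded in $R$, which you already noted), your double-limit argument goes through and coincides in substance with the paper's.
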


\begin{proof}
As a preliminary step, we notice that, for any $R$ such that $\min_t \vert x_R(t) \vert \geq K$ (if any), Lemma \ref{lemlj}
implies that $r_R(t):= \vert x_R(t) \vert$ has a unique minimum point $t_R$; in this case, we set $t^-_R = t^+_R = t_R$.
Hence, the instants $t^{\pm}_R$ are well-defined for any $R$. We also introduce the constants
$K_R = \min_t r_R(t)$ and $\tilde K_R = \max\{ K, K_R \}$ and we observe that assumption \eqref{limitesup} guarantees the existence of
$\tilde K \geq K$ such that $\tilde K_R \leq \tilde K$ for any (large) $R$. 
We split the proof in some steps.
\smallbreak
\noindent
\underline{Claim 1:} it holds that $\omega_R - t^+_R\to +\infty$ and $-\omega_R-t^-_R \to -\infty$.
\smallbreak
\noindent
Indeed, using Corollary \ref{cortempi} with $t_1 = t^+_R$ and $t_2 = \omega_R$, we obtain
\begin{align*}
\omega_R - t^+_R & \geq \frac{1}{\left(1+\alpha/2 \right)\sqrt{2C_+}} \left( R^{1+\alpha/2}-\tilde K_R^{1+\alpha/2}\right) \\
& \geq \frac{1}{\left(1+\alpha/2 \right)\sqrt{2C_+}} \left( R^{1+\alpha/2}-\tilde K^{1+\alpha/2}\right),
\end{align*}
whence the conclusion (for $-\omega_R - t^-_R$ the argument is the same).
\smallbreak
We now define
\begin{equation}\label{xrtraslata}
\tilde x_R(t) = x_R\left( t + \frac{t^-_R + t^+_R}{2} \right), \qquad 
t \in [\omega^-_R,\omega^+_R],
\end{equation}
where 
$$
\omega^-_R = -\omega_R - t^-_R - \Delta_R, \quad 
\omega^+_R = \omega_R - t^+_R + \Delta_R, \quad 
\Delta_R = \frac{t^+_R - t^-_R}{2}.
$$
Notice that assumption
\eqref{stimatempi} guarantees that $\Delta_R \leq \Delta$ for a suitable $\Delta > 0$ and $R$ large enough. Then, we have the following.
\smallbreak
\noindent
\underline{Claim 2:} there exists a $\mathcal{C}^2$-function $x_\infty:\mathbb{R} \to \mathbb{R}^3$ such that, for $R \to +\infty$,
$$
\tilde x_R \to x_{\infty} \quad \mbox{ in } \, \mathcal{C}^2_{\textnormal{loc}}(\mathbb{R}).
$$
\smallbreak
\noindent
Indeed, \eqref{limitesup} and \eqref{limiteinf} imply that
$$
\vert \tilde x_R(0) \vert, \quad \vert \dot{\tilde x}_R(0) \vert = \sqrt{2V (\tilde x_R(0))}, \quad 
\max_t \vert \ddot{\tilde x}_R(t) \vert = \max_t \vert \nabla V(\tilde x_R(t)) \vert 
$$
are bounded in $R$. Then, a standard compactness argument gives the conclusion. 
\smallbreak
\noindent
\underline{Claim 3:} $x_\infty: \mathbb{R} \to \mathbb{R}^3 \setminus \Sigma$ is a parabolic solution
of \eqref{main}
and 
$$
r_\infty(t) \sim \gamma_{\alpha,m}, \, \vert t \vert^{\frac{2}{2+\alpha}}\qquad t \to \pm\infty.
$$ 
\smallbreak
\noindent
Indeed, \eqref{limiteinf} guarantees that
$x_\infty$ has no collisions; hence, using Claim 2 we can pass to the limit both in the equation and in the energy relation, thus ensuring that
$x_\infty$ is a parabolic solution of \eqref{main}. 
Moreover, \eqref{stimatempi} implies that $r_\infty(t) \geq K$ for 
$\vert t \vert \geq \Delta$. Then, the asymptotic estimates for $r_\infty$ follow from Proposition \ref{asintotica}
(and the symmetric statement for $t \to -\infty$).
Notice that Proposition \ref{asintotica} also implies that $s_\infty$ admits a limit both for $t \to +\infty$ and for $t \to -\infty$,
but we do not know these limits to be $\xi^{\pm}$. This is indeed our final step.
\smallbreak
\noindent
\underline{Claim 4:} it holds that
\begin{equation}\label{limites}
\lim_{t \to \pm\infty }s_\infty(t) = \xi^{\pm}.
\end{equation}
\smallbreak
\noindent
We prove only the limit relation for $t \to +\infty$ (the other being analogous). 
As a first step, we fix a constant $\tilde C$ such that
$$
\frac{C_1 (\tilde K_R + 1)^{1-\alpha/2} + 
\frac{C_2}{(\tilde K_R + 1)^{1+\alpha/2} - \tilde K_R^{1+\alpha/2}}}
{C_3 \frac{(\tilde K_R + 1)^{1+\alpha/2} - \tilde K_R^{1+\alpha/2}}{(\tilde K_R + 1)^{3(\alpha+2)/4}}}
C_4 < \tilde C
$$
for any $R$ (where the constants $C_1,C_2,C_3$ and $C_4$ are the ones in Lemma \ref{tecnico}); this is possible since
$K \leq \tilde K_R \leq \tilde K$. Next, for any $\epsilon > 0$ let us take $Z_\epsilon > \tilde K + 1$ such that 
$$
\frac{\tilde C}{Z_\epsilon^{(2-\alpha)/4}} < \frac{\epsilon}{2},
$$
so that
$$
\frac{C_1 (\tilde K_R + 1)^{1-\alpha/2} + \frac{C_2}{(\tilde K_R + 1)^{1+\alpha/2} - \tilde K_R^{1+\alpha/2}}}
{C_3 \frac{(\tilde K_R + 1)^{1+\alpha/2} - \tilde K_R^{1+\alpha/2}}{(\tilde K_R + 1)^{3(\alpha+2)/4}}}
\frac{C_4}{Z_\epsilon^{(2-\alpha)/4}} < \frac{\epsilon}{2},
$$
for any $R > Z_\epsilon$. Let $\tilde t_{\epsilon,R} > \Delta'_R  > \Delta_R$ be the unique instants such that
$\tilde r_R(\tilde t_{\epsilon,R}) = Z_\epsilon$ and $\tilde r_R(\Delta'_R) = \tilde K_R + 1$ respectively, where we have employed the
usual notation $\tilde x_R = \tilde r_R \tilde s_R$.  
From Lemma \ref{tecnico} with the choices $t_1 = \Delta_R$, $\tau_1 = \Delta_R'$, $\tau_2 = \tilde t_{\epsilon,R}$ and
$t_2 = \omega_R^+$, we have that
$$
\int_{\tilde t_{\epsilon,R}}^{\omega_R^+} \vert \dot{\tilde s}_R(t) \vert \,dt < \frac{\epsilon}{2}.
$$  
On the other hand, \eqref{fa} gives
$$
\tilde t_{\epsilon,R} - \Delta_R \leq \sqrt{\frac{2\alpha}{(2-\alpha)m}} Z_\epsilon^{1+\alpha/2};
$$
hence, recalling that $\Delta_R \leq \Delta$, 
$$
\tilde t_\epsilon := \sup_{R > Z_\epsilon} \tilde t_{\epsilon,R} < +\infty.
$$
We are now in position to conclude. Indeed, for any $t > \tilde t_\epsilon$ let us take $R > Z_\epsilon$ such that
$\vert \tilde s_R(t) - s_\infty(t) \vert < \epsilon/2$ (following from the $\mathcal{C}^2_\textnormal{loc}$ convergence).
Then
\begin{align*}
\vert s_\infty(t) - \xi^+ \vert & \leq \vert s_\infty(t) - \tilde s_{R}(t) \vert + \vert \tilde s_{R}(t) - \tilde s_R(\omega^+_R) \vert 
\\ & < \frac{\epsilon}{2} + \int_{t}^{\omega_R^+} \vert \dot{\tilde s}_R(t) \vert \,dt \leq
\frac{\epsilon}{2} + \int_{\tilde t_{\epsilon,R}}^{\omega_R^+} \vert \dot{\tilde s}_R(t) \vert <  \frac{\epsilon}{2} + \frac{\epsilon}{2} = \epsilon, 
\end{align*}
thus proving \eqref{limites}.
\end{proof}

\section{The fixed-endpoints problem}\label{s4}

In view of Proposition \ref{approssimazione}, in this section we look for parabolic solutions of the (free-time) fixed-endpoints problem
\begin{equation}\label{bol}
\left\{
\begin{array}{l}
\vspace{0.1cm}
\displaystyle{\ddot x_R = \nabla V(x_R)}\\
x_R(\pm \omega_R) = R \xi^{\pm},
\end{array}
\right.
\end{equation}
with $V$ defined in \eqref{defv}. Henceforth, 
we use the notation
$$
\mathcal{A}_{[a,b]}(x) = \int_a^b \left( \frac{1}{2} \vert \dot x(t) \vert^2 + V(x(t)) \right)\,dt
$$
for any $x \in H^1([a,b]; \mathbb{R}^3)$. As well known, if $\bar{x}: [a,b] \to \mathbb{R}^3 \setminus \Sigma$ is a (non-collision) solution
of $\ddot x = \nabla V(x)$, then $\bar{x}$ is a critical point of the functional $\mathcal{A}_{[a,b]}$ on the domain
$\{ x \in H^1([a,b];\mathbb{R}^3 \setminus \Sigma) \, : \, x(a) = \bar{x}(a),\, x(b) = \bar{x}(b) \}$.
\smallbreak
We have the following result, which can be considered of independent interest. 
 
\begin{theorem}\label{thfix}
Let $K > \Xi +1$ be the constant given in Subsection \ref{stimepot}. Then, for any $R > K$ and for any $\xi^+$, $\xi^- \in \mathbb{S}^2$ with $\xi^+ \neq \xi^-$, 
there exist $\omega_R > 0$ and a parabolic solution of \eqref{bol} satisfying 
\begin{equation}\label{morsefix}
\textnormal{j}(d^2 \mathcal{A}_{[-\omega_R,\omega_R]}(x_R)) \leq 1,
\end{equation}
and
\begin{equation}\label{stimalivello}
\left(\sqrt{\frac{2m}{\alpha}}\frac{4}{2-\alpha}\right) R^{1-\alpha/2} - M \leq \mathcal{A}_{[-\omega_R,\omega_R]}(x_R) \leq 
\left(\sqrt{\frac{2m}{\alpha}}\frac{4}{2-\alpha}\right) R^{1-\alpha/2} + M,
\end{equation}
where $M > 0$ is a suitable constant not depending on $R$.
\end{theorem}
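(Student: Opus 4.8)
The plan is to realize $x_R$ as a mountain-pass critical point of a Maupertuis--Jacobi reformulation of the zero-energy problem, to read off the action bounds \eqref{stimalivello} from explicit radial competitors, and to obtain the Morse-index bound \eqref{morsefix} from the one-dimensional nature of the min-max. First I would pass to a parametrization-free setting adapted to zero energy. On the space of $H^1$ paths $u:[0,1]\to\mathbb{R}^3\setminus\Sigma$ with $u(0)=R\xi^-$ and $u(1)=R\xi^+$, I introduce the Jacobi length
$$
\mathcal{L}_R(u)=\int_0^1\sqrt{2V(u(\sigma))}\,|u'(\sigma)|\,d\sigma
$$
(or the more regular Maupertuis companion $\big(\int_0^1\tfrac12|u'|^2\big)\big(\int_0^1 2V(u)\big)$). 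A positive critical point $u$ reparametrizes, via $dt=|u'|/\sqrt{2V(u)}\,d\sigma$, to a zero-energy solution $x_R$ of $\ddot x=\nabla V(x)$; the total travel time defines $2\omega_R$, and since the equation is autonomous a translation yields a solution of \eqref{bol} on $[-\omega_R,\omega_R]$. The crucial bookkeeping identity is that at zero energy $\tfrac12|\dot x|^2+V=2V=\sqrt{2V}\,|\dot x|$, so that $\mathcal{A}_{[-\omega_R,\omega_R]}(x_R)=\mathcal{L}_R(u)$; this is what makes \eqref{stimalivello} a statement about the critical \emph{length}.

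Next I would set up the mountain-pass geometry and extract the level estimates. Since $\mathbb{R}^3\setminus\Sigma$ is simply connected, the obstruction is energetic rather than topological: using $\xi^+\neq\xi^-$ together with the variational description of the homogeneous parabolic arcs carried out in the Appendix, one builds a one-parameter family of admissible paths joining two reference paths that pass the cluster $\Sigma$ on opposite sides, all sharing the leading radial cost, in such a way that the min-max level exceeds the endpoint levels by a positive but $O(1)$ amount. For the upper bound in \eqref{stimalivello} I test with the broken radial path going straight in from $R\xi^-$ to radius $O(1)$, turning near the centers along a bounded arc, then straight out to $R\xi^+$; by \eqref{stimak} its two radial runs contribute
$$
2\sqrt{\tfrac{2m}{\alpha}}\int_{O(1)}^{R}r^{-\alpha/2}\,dr=\Big(\sqrt{\tfrac{2m}{\alpha}}\tfrac{4}{2-\alpha}\Big)R^{1-\alpha/2}+O(1),
$$
while the turning arc, confined to a fixed ball, adds only $O(1)$. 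For the lower bound, Lemma \ref{lemlj} forces $r_R$ to attain an interior minimum, so $x_R$ genuinely dives in and climbs back out, and estimating $\sqrt{2V}\,|\dot x|\geq\sqrt{2V}\,|\dot r|$ with the lower bound in \eqref{stimak} reproduces the same leading term minus a constant; the constant $M$ absorbs all the $O(1)$ discrepancies, bounded uniformly in $R$ because $\Sigma$ lives in a fixed ball. Compactness at the min-max level is standard away from collisions: for fixed endpoints $\mathcal{L}_R$ is coercive, so paths cannot escape to infinity and the only possible loss of compactness is a collision.

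The delicate point, and the main obstacle, is \eqref{morsefix} together with the \emph{absence} of collisions. Because $\alpha\in[1,2)$ violates the strong-force condition, colliding paths have finite length, so the min-max could a priori detect a colliding trajectory, and the Morse index is precisely the device that excludes this. Since the min-max family is one-dimensional, the resulting critical point carries at most one unstable direction; after quotienting out the single degenerate reparametrization direction of $\mathcal{L}_R$ and invoking the classical comparison between the fixed-energy (Jacobi) and fixed-time Morse indices, this transfers to $\textnormal{j}(d^2\mathcal{A}_{[-\omega_R,\omega_R]}(x_R))\leq 1$. On the other hand, a colliding zero-energy trajectory should carry Morse index $\geq 2$: a blow-up at the collision reduces to the homogeneous model, whose parabolic collision--ejection arc overshoots (the scattering angle $2\pi/(2-\alpha)>2\pi$), producing a negative second-variation direction in each of the two transverse angular directions, with the requisite positivity supplied by \eqref{deltastar2}. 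This would contradict the bound above and force $x_R\subset\mathbb{R}^3\setminus\Sigma$.

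Establishing this collision Morse-index lower bound uniformly in $R$ is therefore the crux, and the Newtonian case $\alpha=1$ is where it is hardest, since the second variation is not directly defined across a collision. There I would first regularize the two-body collisions by a Levi-Civita/Kustaanheimo--Stiefel-type desingularization (cf.\ \cite{Spe69}), so that colliding trajectories become genuine critical points of a smooth regularized functional with a well-defined, and provably large, Morse index, to which the same incompatibility argument applies. I expect the bulk of the technical work, beyond the routine estimates drawn from Subsection \ref{stimepot}, to concentrate precisely in this collision analysis and in the faithful transfer of the index from $\mathcal{L}_R$ to $d^2\mathcal{A}$.
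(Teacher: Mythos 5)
Your overall architecture --- a one-dimensional min-max for a Maupertuis/Jacobi functional, Morse index at most one, level estimates from radial competitors, and collision exclusion via blow-up with special treatment of $\alpha=1$ --- is the same as the paper's, but two of your key steps fail as stated. The first concerns the min-max class and the lower level bound. You run a mountain pass between two high-action reference paths and assert that the min-max level exceeds the endpoint levels by a definite amount. In $\mathbb{R}^3\setminus\Sigma$, which is simply connected, one can swing one reference path onto the other around $\Sigma$ (out of any fixed plane), keeping every intermediate path at a fixed positive distance from the centers and within $O(1)$ of the endpoint action; no definite gap is available, and without it the mountain-pass deformation argument (which must fix the endpoint paths below the critical level) collapses. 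Moreover, you never introduce the strong-force regularization $V_\beta = V+\beta U$: for $\alpha\in[1,2)$ the unperturbed functional does not satisfy Palais--Smale, so your min-max produces no critical point at all, not even a generalized one on which to run the index argument. Your fallback for the lower bound --- that Lemma \ref{lemlj} forces $x_R$ to ``dive in and climb back out'' --- is also incorrect: Lemma \ref{lemlj} gives an interior minimum of $r_R$ but says nothing about its depth, and a solution whose minimal radius is $\sim dR$ with $d>0$ has action strictly below the leading term in \eqref{stimalivello} (this is precisely what Proposition \ref{azionebolza} quantifies, and it is how the paper derives a contradiction in Section \ref{limitesupsec}; it is not a property enjoyed by all solutions). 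The paper's device replacing all of this is the class $\Lambda$ of $\mathbb{S}^1$-families of paths with the two degree conditions in \eqref{minmaxdef}: this class is invariant under arbitrary deformations of $\Gamma$ (so no gap or endpoint condition is ever needed), and the degree conditions force every $h\in\Lambda$ to contain a path meeting the segment $[c_1,c_2]$ (property \eqref{segmento}), which yields the lower bound \eqref{stimamau} on the level and hence on $\mathcal{A}_{[-\omega_R,\omega_R]}(x_R)$.

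The second gap is the collision exclusion at $\alpha=1$, which your proposal misidentifies as an index argument. You claim a colliding parabolic arc carries index at least $2$ (``a negative direction in each of the two transverse angular directions'') and that a Levi-Civita/KS-type regularization would make the index ``provably large''. This cannot work: the blow-up analysis (Tanaka's, reproduced in Lemma \ref{morsetanaka}) yields $i(\alpha)=\max\{k\in\mathbb{N}: k<2/(2-\alpha)\}$ negative directions per collision, coming from out-of-plane variations, and $i(1)=1$; the in-plane normal direction contributes nothing for the Kepler parabola (scattering angle exactly $2\pi$, no self-intersection). Thus for $\alpha=1$ a single collision is perfectly compatible with index $\leq 1$, and no regularization can raise the index. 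The paper's actual mechanism for $\alpha=1$ is different in kind: Sperling's regularization shows that the solution extends through the collision as the unique solution of a smooth, reversible ODE in the variables $(x,y,w)$, which forces the reflection symmetry $x_0(t)=x_0(2t_0-t)$; this symmetry is then incompatible with the boundary data $\vert q^+\vert=\vert q^-\vert=R$, $q^+\neq q^-$, via the Lagrange--Jacobi inequality of Lemma \ref{lemlj}. This uniqueness/symmetry argument, not a Morse index count, is the idea missing from your proposal; for $\alpha>1$ your blow-up index argument does match the paper's.
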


A comment about this result is in order. The existence of a parabolic solution of the fixed-endpoints problem is far from being surprising, since it
could be proved by using quite standard minimization arguments (together with Marchal's principle \cite{Mar02}). In this way, a solution having zero Morse index 
(cf. \eqref{morsefix}) can be obtained. Unfortunately, this solution is not robust when the fixed ends are sent to infinity.
 The crucial point in Theorem \ref{thfix} is the asymptotic level estimate \eqref{stimalivello}, which indeed
does not hold for minimizing parabolic solutions. This estimate, together with the Morse index bound \eqref{morsefix}, will allow us to pass to the limit as the endpoints tend to infinity along 
the fixed directions. Indeed it enables us to prove 
\eqref{limitesup}, \eqref{limiteinf} and \eqref{stimatempi} (see Section \ref{s5}); via Proposition \ref{approssimazione}, 
an entire parabolic solution of \eqref{main} 
with prescribed asymptotic directions will be therefore obtained.
\smallbreak 
The proof of Theorem \ref{thfix} will be given in four main steps. 

At first (see Section \ref{sezminmax}), we use a variational argument of min-max type to prove the existence of a parabolic solution for a modified equation of the form $\ddot x = \nabla V_\beta(x)$, with $\beta \in (0,1]$ and $V_\beta$ a potential satisfying a strong-force condition near each center. The min-max argument is similar to the one introduced in \cite{BahRab89,Gre88} dealing with the fixed-time (periodic) problem; here we look for fixed-energy solutions, therefore using the Maupertuis functional
(as in \cite{FelTan00,Tan93}). 

As a second step (see Section \ref{secgen}), we pass to the limit for $\beta \to 0^+$ so as to find the existence of a \emph{generalized} (parabolic) solution of 
$\ddot x = \nabla V(x)$ (cf. \cite{BahRab89, Tan93} again).

In the third step (see Section \ref{noncoll}), we prove that generalized solutions are actually classical ones, by showing that collisions with the set of the centers cannot occur.
To this end, we take advantage of a blow-up argument introduced in \cite{Tan93b} and highlighting the relation between 
the Morse index of the solution and the number of its collisions. This is enough to obtain the conclusion when $\alpha > 1$, while 
further information coming from regularizations techniques \cite{Spe69} is needed when $\alpha = 1$.

Finally (see Section \ref{seclivello}), we prove the Morse index formula \eqref{morsefix} as well as the level estimate \eqref{stimalivello}.

\smallbreak
The arguments in the first two steps, as well as in the third step for $\alpha > 1$, are valid more in general for parabolic solutions
joining two points $q^+, q^- \in \mathbb{R}^3 \setminus \Sigma$ with $q^+ \neq q^-$. For this reason, and not to overload the notation
emphasizing an inessential dependence on $R$, we will give the corresponding proofs in this setting. 

\subsection{A min-max argument}\label{sezminmax}

Let us first define the modified potential $V_\beta$, for $\beta \in [0,1]$, by setting
$$
V_\beta(x) = V(x) + \beta U(x), \qquad x \in \mathbb{R}^3 \setminus \Sigma,
$$
where $U \in \mathcal{C}^2(\mathbb{R}^3 \setminus \Sigma)$ is defined as
$$
U(x) = \sum_{i=1}^N \frac{m_i}{2 \vert x - c_i \vert^2} \Psi(\vert x - c_i \vert^2), 
$$
with $\Psi \in \mathcal{C}^2(\mathbb{R}^+;[0,1])$ a cut-off function such that $\Psi(r) = 1$ if $0 \leq r \leq \delta^*$ and $\Psi(r) = 0$ if $r \geq 2\delta^*$.
At this point, we can introduce the Maupertuis functional
$$
\mathcal{M}_\beta(u) = \int_{-1}^1 \vert \dot u(t) \vert^2 \,dt \int_{-1}^1 V_\beta(u(t))\,dt
$$
defined on the Hilbert manifold
$$
\Gamma = \Gamma_{q^{\pm}} = \Big\{ u \in H^1([-1,1];\mathbb{R}^3 \setminus \Sigma) \, : \, u( \pm 1) = q^{\pm} \Big\}. 
$$
As well-known (see, for instance, \cite[Theorem 4.1]{AmbCot93} and \cite[Appendix B]{SoaTer13}) $\mathcal{M}_\beta$ is smooth 
and any critical point $u_\beta \in \Gamma$ 
satisfies, for $t \in [-1,1]$, 
\begin{equation}\label{eqmau}
\ddot u_\beta(t) = \omega_\beta^2\, \nabla V_\beta (u_\beta(t)), \qquad \frac{1}{2}\vert \dot u_\beta(t) \vert^2 - \omega_\beta^2\, V_\beta(u_\beta(t)) = 0,
\end{equation}
where 
\begin{equation}\label{defomega}
\omega_\beta = \left(\frac{\int_{-1}^1 \vert \dot u_\beta \vert^2}{2 \int_{-1}^1 V_\beta(u_\beta)}\right)^{1/2}.
\end{equation}
Notice that, since $q^+ \neq q^-$, $u_\beta$ is not constant: as a consequence, $\omega_\beta > 0$ and the function
\begin{equation}\label{defxbeta}
x_\beta(t) = u_\beta\left( \frac{t}{\omega_\beta}\right), \qquad t \in [-\omega_\beta,\omega_\beta],
\end{equation}
is a parabolic solution of $\ddot x_\beta = \nabla V_\beta(x_\beta)$ on the interval $[-\omega_\beta,\omega_\beta]$ and, of course, $x_\beta(\pm \omega_\beta) = q^{\pm}$.
\smallbreak
In the next lemma we collect the compactness properties of $\mathcal{M}_\beta$ which will be used later.

\begin{lemma}\label{mauplem}
The following hold true:
\begin{itemize}
\item[(M1)] for any $\beta \geq 0$, $\mathcal{M}_\beta$ is coercive at infinity, that is, if $\Vert u_n \Vert \to +\infty$, then
$$
\lim_{n \to +\infty} \mathcal{M}_\beta(u_n) = +\infty;
$$
\item[(M2)] for any $\beta > 0$, $\mathcal{M}_\beta$ is ``coercive at the boundary'', that is, if $u \in \partial\,\Gamma$ and $u_n \rightharpoonup u$ weakly in $H^1$, then
$$
\lim_{n \to +\infty} \mathcal{M}_\beta(u_n) = +\infty;
$$ 
\item[(M3)] for any $\beta > 0$, $\mathcal{M}_\beta$ satisfies the Palais-Smale condition, that is, if $\mathcal{M}_\beta(u_n)$ is bounded and
$\nabla \mathcal{M}_\beta(u_n) \to 0$, then there exists 
$u \in \Gamma$ such that $u_n \to u$ strongly in $H^1$ (up to subsequences).
\end{itemize}
\end{lemma}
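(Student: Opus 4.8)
The plan is to treat the three assertions in the order (M1), (M2), (M3), since the Palais--Smale condition (M3) will follow from the first two by a standard scheme. Throughout I write $D(u) = \int_{-1}^1 |\dot u|^2\,dt$ and $P(u) = \int_{-1}^1 V_\beta(u)\,dt$, so that $\mathcal{M}_\beta(u) = D(u)P(u)$, and I record two elementary facts. First, since $V_\beta \geq V > 0$ the factor $P$ is strictly positive, and by Cauchy--Schwarz
\[
\mathcal{M}_\beta(u) \geq \left( \int_{-1}^1 |\dot u|\,\sqrt{V_\beta(u)}\,dt \right)^{2} =: \ell(u)^2,
\]
where $\ell(u)$ is the length of $u$ in the Jacobi metric $\sqrt{V_\beta}\,|dx|$. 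Second, as the endpoints $u(\pm1)=q^\pm$ are fixed, control of $\int|\dot u|^2$ controls $\int|u|^2$, so $\Vert u\Vert \to +\infty$ if and only if $D(u)\to +\infty$.

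For (M1) I assume $D(u_n)\to+\infty$ and, to conclude $\mathcal{M}_\beta(u_n)\to+\infty$, it suffices to extract from every subsequence a further subsequence along which $\mathcal{M}_\beta\to+\infty$; I split according to whether $\rho_n := \max_t |u_n(t)|$ is bounded. If $\rho_n \leq \bar\rho$, then $u_n$ ranges in the compact set $\overline{B_{\bar\rho}(0)}$, on which $V$ (continuous, positive, and blowing up at $\Sigma$) is bounded below by a positive constant; hence $P(u_n)\geq c>0$ and $\mathcal{M}_\beta(u_n)\geq c\,D(u_n)\to+\infty$. If instead $\rho_n\to+\infty$, I use the Jacobi length: the radial coordinate $|u_n(t)|$ sweeps the whole interval $[K,\rho_n]$ for $n$ large, so, bounding $\sqrt{V_\beta}\geq\sqrt{V}\geq\sqrt{C_-}\,|x|^{-\alpha/2}$ for $|x|\geq K$ via \eqref{stimaV} and using $|\dot u_n|\geq\bigl|\tfrac{d}{dt}|u_n|\bigr|$,
\[
\ell(u_n) \geq \sqrt{C_-}\int_{K}^{\rho_n} r^{-\alpha/2}\,dr = \frac{\sqrt{C_-}}{1-\alpha/2}\left(\rho_n^{1-\alpha/2}-K^{1-\alpha/2}\right) \to +\infty,
\]
because $\alpha<2$, whence $\mathcal{M}_\beta(u_n)\geq\ell(u_n)^2\to+\infty$. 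This is the step I expect to be the main obstacle: unlike a standard action functional, the product structure of $\mathcal{M}_\beta$ permits the factor $P$ to vanish along diverging sequences (the potential decays at infinity), and it is precisely the Jacobi-length bound together with $\alpha<2$ that restores coercivity.

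For (M2) let $\beta>0$, let $u\in\partial\Gamma$, so $u(t_0)=c_i$ for some $t_0$ and some center $c_i$, and let $u_n\rightharpoonup u$. The compact embedding $H^1\hookrightarrow C^0$ gives $u_n\to u$ uniformly, while weak lower semicontinuity of $D$ gives $\liminf_n D(u_n)\geq D(u)>0$, the limit being non-constant since $q^+\neq q^-$. The strong-force character of $U$ now forces $P$ to blow up: since $u$ is absolutely continuous with $u(t_0)=c_i$, Cauchy--Schwarz yields $|u(t)-c_i|\leq\Vert\dot u\Vert_{L^2}\,|t-t_0|^{1/2}$ near $t_0$, so the integral $\int\Psi(|u-c_i|^2)\,|u-c_i|^{-2}\,dt$ diverges, and Fatou's lemma gives $\liminf_n P(u_n)\geq\tfrac{\beta m_i}{2}\int\Psi(|u-c_i|^2)|u-c_i|^{-2}\,dt=+\infty$. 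Together with $D(u_n)\geq c>0$ this yields $\mathcal{M}_\beta(u_n)\to+\infty$.

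Finally, for (M3) suppose $\mathcal{M}_\beta(u_n)$ is bounded and $\nabla\mathcal{M}_\beta(u_n)\to0$. By (M1) the sequence is bounded in $H^1$, so up to a subsequence $u_n\rightharpoonup u$ and $u_n\to u$ uniformly; by (M2) the limit cannot collide, hence $u\in\Gamma$ and, for $n$ large, $u_n$ lies in a fixed compact subset of $\mathbb{R}^3\setminus\Sigma$ on which $V_\beta$ and $\nabla V_\beta$ are bounded. Recalling
\[
d\mathcal{M}_\beta(u)[\varphi] = 2P(u)\int_{-1}^1 \dot u\cdot\dot\varphi\,dt + D(u)\int_{-1}^1 \nabla V_\beta(u)\cdot\varphi\,dt,
\]
I test $\nabla\mathcal{M}_\beta(u_n)\to0$ against $\varphi = u_n-u$, which vanishes at $\pm1$: the potential term tends to $0$ because $u_n\to u$ in $L^2$ with $\nabla V_\beta(u_n)$ bounded and $D(u_n)$ bounded, while $P(u_n)\to P(u)>0$ by uniform convergence. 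Hence $\int_{-1}^1\dot u_n\cdot(\dot u_n-\dot u)\,dt\to0$, and combining with $\dot u_n\rightharpoonup\dot u$ gives $\dot u_n\to\dot u$ in $L^2$, so $u_n\to u$ strongly in $H^1$, as required.
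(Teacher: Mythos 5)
Your proof is correct, and its overall architecture matches the paper's: (M1) and (M2) first, then (M3) by the standard scheme of extracting a weak limit, placing it in $\Gamma$ via (M2), and testing $\nabla \mathcal{M}_\beta(u_n)$ against $u_n-u$; your (M3) is essentially verbatim the paper's argument. The differences are in the first two items. For (M2), the paper reduces to showing $\int_{-1}^1 V_\beta(u_n)\to+\infty$ and then simply cites \cite[Lemma 5.3]{AmbCot93}; you instead carry out the strong-force (Gordon-type) argument explicitly, via the H\"older-$\tfrac12$ bound $\vert u(t)-c_i\vert \leq \Vert \dot u \Vert_{L^2} \vert t-t_0\vert^{1/2}$ and Fatou's lemma — same idea, but self-contained. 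For (M1) your mechanism is genuinely different: the paper argues by contradiction, noting that if $\mathcal{M}_\beta(u_n)$ stays bounded while $\int \vert\dot u_n\vert^2\to\infty$, then $\delta_n := \int V_\beta(u_n)\to 0^+$, so some point $u_n(t_n)$ satisfies $\vert u_n(t_n)\vert \geq (C_-/\delta_n)^{1/\alpha}$ by \eqref{stimaV}, whence $\int\vert\dot u_n\vert^2 \gtrsim \delta_n^{-2/\alpha}$ and $\mathcal{M}_\beta(u_n) \gtrsim \delta_n^{1-2/\alpha}\to+\infty$ (using $\alpha<2$), a contradiction. You argue directly, with a dichotomy on $\rho_n = \max_t\vert u_n(t)\vert$: positivity of $V$ on compacta in the bounded case, and the Jacobi-length bound $\mathcal{M}_\beta(u)\geq \mathcal{L}(u)^2$ with $\mathcal{L}(u_n)\gtrsim \rho_n^{1-\alpha/2}$ in the unbounded case. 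Both proofs ultimately rest on \eqref{stimaV} and the subcriticality $\alpha<2$; the paper's is more compact, while yours is constructive and uses exactly the functional $\mathcal{L}$ that the paper itself deploys later for the level estimate \eqref{stimamau} in Section \ref{seclivello}, so it fits the paper's toolbox well. One harmless imprecision: the radial coordinate of $u_n$ sweeps $[\max(K,\vert q^+\vert),\rho_n]$ rather than $[K,\rho_n]$ (in the application $\vert q^\pm\vert = R > K$), but the resulting integral still diverges, so your estimate is unaffected.
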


\begin{proof}
As for (M1), we argue similarly as in \cite[Lemma 3.2]{BarTerVer14}. Suppose by contradiction that $\Vert u_n \Vert \to \infty$ and
$\mathcal{M}_\beta(u_n)$ is bounded from above. 
Then $\int_{-1}^1 \vert \dot u_n \vert^2 \to +\infty$ and, therefore,
$$
\delta_n := \int_{-1}^1 V_\beta(u_n) \to 0^+.
$$
As a consequence, there exists
$t_n \in [0,1]$ such that $V_\beta(u_n(t_n)) \leq \delta_n$; hence $\vert u_n(t_n) \vert \to +\infty$.
From \eqref{stimaV}, we thus 
$$
\vert u_n(t_n) \vert \geq \left(\frac{C_-}{\delta_n}\right)^{1/\alpha}.
$$
Then, for large $n$, we have
$$
\int_{-1}^1 \vert \dot u_n \vert^2 \geq \int_{-1}^{t_n} \vert \dot u_n \vert^2 \geq \frac{1}{2} \vert u_n(t_n) - q^- \vert^2 \geq \frac{1}{2}(C_-^{1/\alpha}\delta_n^{-1/\alpha}-\vert q^- \vert)^2 \geq \frac{1}{4}C_-^{2/\alpha}\delta_n^{-2/\alpha};
$$
as a consequence
$$
\mathcal{M}_\beta(u_n) \geq \frac{1}{4}C_-^{2/\alpha}\delta_n^{1-\frac{2}{\alpha}}
$$
contradicting the fact that $\mathcal{M}_\beta(u_n)$ is bounded for above.
\smallbreak
As for (M2), we first observe that 
$$
\mathcal{M}_\beta(u_n) \geq \frac{1}{2}\big( \vert q^+ \vert - \vert q^- \vert \big)^2 \int_{-1}^1 V_\beta(u_n);
$$
hence, we only need to show that $\int_{-1}^1 V_\beta(u_n) \to \infty$. This can be proved as in 
\cite[Lemma 5.3]{AmbCot93} with obvious modifications.

Finally, we deal with (M3). Let $(u_n) \subset \Gamma$ be a Palais-Smale sequence. From (M1) we know that
$\Vert u_n \Vert$ is bounded, so that $u_n \rightharpoonup u$ weakly in $H^1$; moreover, $u \in \Gamma$ in view of (M2).
Hence, we only need to show that $u_n \to u$ strongly. To this end, we write
\begin{align*}
\langle \nabla \mathcal{M}_\beta(u_n), u_n - u \rangle = & 2 \int_{-1}^1 \vert \dot u_n \vert^2 \int_{-1}^1 V_\beta(u_n) - 
2 \int_{-1}^1( \dot u_n \cdot \dot u) \int_{-1}^1 V_\beta(u_n) + \\
& \int_{-1}^1 \vert \dot u_n \vert^2 \int_{-1}^1 \big(\nabla V_\beta(u_n) \cdot (u_n - u)\big) 
\end{align*}
Since $u_n \rightharpoonup u$ weakly in $H^1$ and $u_n \to u$ uniformly in $[-1,1]$, with $u \in \Gamma$, it holds that 
$$
2 \int_{-1}^1( \dot u_n \cdot \dot u) \int_{-1}^1 V_\beta(u_n) \to 2 \int_{-1}^1 \vert \dot u \vert^2 \int_{-1}^1 V_\beta(u)
$$
and
$$
\int_{-1}^1 \vert \dot u_n \vert^2 \int_{-1}^1 \big(\nabla V_\beta(u_n) \cdot (u_n - u)\big) \to 0.
$$
Therefore, taking into account that $\langle \nabla \mathcal{M}_\beta(u_n), u_n - u \rangle \to 0$ 
(notice that $u_n - u \in H^1_0([-1,1])$), we infer that
$$
2 \int_{-1}^1 \vert \dot u_n \vert^2 \int_{-1}^1 V_\beta(u_n) \to 2 \int_{-1}^1 \vert \dot u \vert^2 \int_{-1}^1 V_\beta(u).
$$
Since $\int_{-1}^1 V_\beta(u_n) \to \int_{-1}^1 V_\beta(u)$, we thus have $\int_{-1}^1 \vert \dot u_n \vert^2 \to \int_{-1}^1 \vert \dot u \vert^2$.
As a consequence, $u_n \to u$ strongly in $H^1$, as desired.
\end{proof}

Now we are going to describe the min-max argument. For any $h \in \mathcal{C}\left(\mathbb{S}^1,\Gamma\right)$ and for $i=1,2$, set
$$
\tilde h_i : \mathbb{S}^1 \times [-1,1] \to \mathbb{S}^2, \qquad
(s,t) \mapsto \frac{h(s)(t)-c_i}{\vert h(s)(t) - c_i \vert}.
$$
Since $\tilde h_i(s,\pm1) = q^{\pm}$ for any $s \in \mathbb{S}^1$, the map
$\tilde h_i$ can be identified with a continuous self-map on $\mathbb{S}^2$ and so it has a well-defined degree
$\textnormal{deg}_{\mathbb{S}^2}(\tilde h_i)$ \cite{GraDug03}. We can thus define the class 
\begin{equation}\label{minmaxdef}
\Lambda = \Lambda_{q^{\pm}} = \left\{ h \in \mathcal{C}\left(\mathbb{S}^1,\Gamma\right) \, : \, 
\textnormal{deg}_{\mathbb{S}^2}(\tilde h_1 ) \neq 0 = \textnormal{deg}_{\mathbb{S}^2}(\tilde h_2 )  \right\} 
\end{equation}
(it is clear that this set is non-empty) and the associated min-max value
\begin{equation}\label{deflivello}
c_\beta = c_{\beta,q^{\pm}} = \inf_{h \in \Lambda} \sup_{s \in \mathbb{S}^1} \mathcal{M}_\beta(h(s)).
\end{equation}
We first show that the levels $c_\beta$ are bounded and bounded away from zero.

\begin{lemma}\label{lemliv}
There exist $c_*, c^* > 0$ such that
$$
c_* \leq c_\beta \leq c^*, \quad \mbox{ for any } \beta \in [0,1].
$$
\end{lemma}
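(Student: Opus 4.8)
The plan is to exploit the monotonicity of the modified potential $V_\beta = V + \beta U$ in the parameter $\beta$. Since $U \geq 0$, for every $u \in \Gamma$ and every $0 \leq \beta \leq 1$ one has $V(u) \leq V_\beta(u) \leq V_1(u)$ pointwise, and hence, as $\int_{-1}^1 |\dot u|^2 \geq 0$,
\begin{equation*}
\mathcal{M}_0(u) \leq \mathcal{M}_\beta(u) \leq \mathcal{M}_1(u), \qquad u \in \Gamma, \ \beta \in [0,1].
\end{equation*}
This sandwiches the min-max levels between $c_0$ and $c_1$, so it suffices to bound $c_1$ from above and $c_0$ from below by positive constants, both manifestly independent of $\beta$.

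For the upper bound I would fix once and for all a single admissible family $h_0 \in \Lambda$ (the class $\Lambda$ being non-empty). Since $h_0$ takes values in $\Gamma$, every path $h_0(s)$ is collision-free, so $\mathcal{M}_1(h_0(s))$ is finite and the map $s \mapsto \mathcal{M}_1(h_0(s))$ is continuous on the compact circle $\mathbb{S}^1$; it therefore attains a finite maximum $c^*$. By the monotonicity above, $c_\beta \leq \sup_{s} \mathcal{M}_\beta(h_0(s)) \leq \sup_{s} \mathcal{M}_1(h_0(s)) = c^*$ for every $\beta \in [0,1]$.

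The lower bound is the point where a quantity bounded away from zero must be produced uniformly over the whole class $\Lambda$. Here I would observe that it is in fact enough to bound $\mathcal{M}_0$ from below on all of $\Gamma$: by Cauchy--Schwarz,
\begin{equation*}
\mathcal{M}_0(u) = \int_{-1}^1 |\dot u|^2 \,dt \int_{-1}^1 V(u)\,dt \geq \frac{1}{2}\left( \int_{-1}^1 \sqrt{2V(u(t))}\, |\dot u(t)|\, dt \right)^2,
\end{equation*}
and the integral on the right is precisely the Jacobi length (at zero energy) of the path $u$, hence is no smaller than the Jacobi distance $d_J(q^-, q^+)$ between the two fixed endpoints. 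Since $V$ is continuous and strictly positive on $\mathbb{R}^3 \setminus \Sigma$, choosing a radius $\rho$ with $\rho < |q^+ - q^-|$ and $\overline{B_\rho(q^-)} \cap \Sigma = \emptyset$ gives $v_0 := \min_{\overline{B_\rho(q^-)}} V > 0$; every path from $q^-$ to $q^+$ must cross $\partial B_\rho(q^-)$, so its Jacobi length is at least $\sqrt{2 v_0}\,\rho > 0$, whence $d_J(q^-, q^+) > 0$. Setting $c_* = \tfrac{1}{2} d_J(q^-, q^+)^2$, we get $\mathcal{M}_\beta(u) \geq \mathcal{M}_0(u) \geq c_*$ for every $u \in \Gamma$ and every $\beta$, and taking first the supremum over $s$ and then the infimum over $h$ yields $c_\beta \geq c_*$.

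The computations are all routine; the only point demanding a little care — and the reason the statement is not entirely trivial — is the uniformity in $\beta$, which is why I route everything through the two monotonicity endpoints rather than estimating each $\mathcal{M}_\beta$ directly. It is worth stressing that the topological restriction defining $\Lambda$ (the degree conditions on $\tilde h_1$ and $\tilde h_2$) plays \emph{no} role in this particular lemma: positivity of the level follows solely from the fact that $q^+ \neq q^-$ forces a positive Jacobi distance. Those degree conditions will instead become essential later, in order to guarantee that the min-max value is an actual critical value detecting a genuinely non-minimizing, mountain-pass type solution.
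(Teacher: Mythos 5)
Your proof is correct, and its skeleton coincides with the paper's: both use $U \geq 0$ to get $\mathcal{M}_0 \leq \mathcal{M}_\beta \leq \mathcal{M}_1$ on $\Gamma$, hence $c_0 \leq c_\beta \leq c_1$, and both handle the upper bound by evaluating along a single fixed family in $\Lambda$ (the paper leaves this finiteness step entirely implicit; you spell it out). Where you genuinely diverge is the lower bound $c_0 > 0$. The paper argues by contradiction, recycling the coercivity mechanism of (M1): if $\mathcal{M}_0(h_n(s_n)) \to 0$, then $\delta_n := \int_{-1}^1 V(h_n(s_n)) \to 0$, because the fixed distinct endpoints bound the kinetic factor away from zero; then by the decay estimate \eqref{stimaV} the path must reach distance of order $\delta_n^{-1/\alpha}$ from the origin, so the kinetic factor is at least of order $\delta_n^{-2/\alpha}$ and $\mathcal{M}_0(h_n(s_n)) \geq \tfrac{1}{4}C_-^{2/\alpha}\delta_n^{1-2/\alpha} \to +\infty$ (here $\alpha < 2$ is essential), a contradiction. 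You instead bound $\mathcal{M}_0$ from below directly: Cauchy--Schwarz gives $\mathcal{M}_0(u) \geq \tfrac{1}{2}\bigl(\int_{-1}^1 \sqrt{2V(u)}\,\vert \dot u\vert\,dt\bigr)^2$, half the squared Jacobi length, and the Jacobi distance between $q^-$ and $q^+$ is positive by a purely local argument near $q^-$. Your route is more elementary and somewhat more robust: it uses neither the behavior of $V$ at infinity nor $\alpha < 2$, it yields an explicit constant $c_* = v_0\rho^2$, and it makes transparent (as you note) that the bound holds on all of $\Gamma$, so the degree conditions defining $\Lambda$ play no role here --- which is equally true, but less visible, in the paper's contradiction argument; what the paper's route buys is economy, since it reuses verbatim an estimate already needed for the Palais--Smale analysis. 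Incidentally, your argument also sidesteps a slip in the paper's text: the inequality there carries the factor $(\vert q^+\vert - \vert q^-\vert)^2$, which vanishes in the intended application $q^{\pm} = R\xi^{\pm}$; it should read $\vert q^+ - q^-\vert^2$, which is exactly the quantity on which your argument is built.
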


\begin{proof}
We first observe that the function $\beta \mapsto c_\beta$ is non-decreasing. As a consequence,
$c_0 \leq c_\beta \leq c_1$ for any $\beta \in [0,1]$. We thus only need to show that $c_0 > 0$.
By contradiction, assume that there exist sequences $(h_n) \subset \Lambda$ and $(s_n) \subset \mathbb{S}^1$ such that
$\mathcal{M}_0(h_n(s_n)) \to 0$. Then
$$
\int_{-1}^1 V(h_n(s_n)) \leq \frac{2 \mathcal{M}_0(h_n(s_n))}{(\vert q^+ \vert - \vert q^- \vert)^2} \to 0
$$
and the very same arguments used in the proof of (M1) in Lemma \ref{mauplem} show that 
$\mathcal{M}_0(h_n(s_n)) \to +\infty$, a contradiction.
\end{proof}

We are now in position to state and prove the main result of this subsection, ensuring the existence of 
critical points at level $c_\beta$ and having Morse index at most $1$.

\begin{proposition}\label{valcritico}
For any $\beta > 0$, $c_\beta$ is a critical value for the functional $\mathcal{M}_\beta$. 
In particular, there exists $u_{\beta} = u_{\beta,q^{\pm}} \in \Gamma$ such that
$$
\mathcal{M}_\beta(u_\beta) = c_\beta, \quad \nabla \mathcal{M}_\beta(u_\beta) = 0, \quad
\textnormal{j}\left( d^2 \mathcal{M}_\beta(u_\beta) \right) \leq 1.
$$
\end{proposition}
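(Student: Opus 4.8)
The plan is to run a standard min--max argument built on a quantitative deformation lemma, the only non-routine points being the invariance of the topological class $\Lambda$ under the deformation flow and the extraction of a critical point of controlled Morse index. Fix $\beta>0$. By Lemma \ref{mauplem}, $\mathcal{M}_\beta$ is $\mathcal{C}^2$, coercive at infinity (M1), coercive at the boundary $\partial\Gamma$ (M2) and satisfies the Palais--Smale condition (M3). The point of the strong-force term $\beta U$ is precisely (M2): it forces $\mathcal{M}_\beta\to+\infty$ as one approaches the collision set, so that every sublevel set $\{\mathcal{M}_\beta\le b\}$ stays uniformly away both from infinity (by (M1)) and from $\Sigma$ (by (M2)). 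On such a sublevel set I would construct a locally Lipschitz negative pseudo-gradient field for $\mathcal{M}_\beta$ on the manifold $\Gamma$; since $\Gamma$ encodes the fixed endpoints $u(\pm1)=q^\pm$, the admissible directions are in $H^1_0$ and the induced flow $\eta(\cdot,\tau)$ automatically preserves the endpoints and stays in $\Gamma$. Together with (M3), this yields the usual deformation lemma: if $c_\beta$ were a regular value, then for small $\varepsilon>0$ there is $\tau$ with $\eta(\{\mathcal{M}_\beta\le c_\beta+\varepsilon\},\tau)\subset\{\mathcal{M}_\beta\le c_\beta-\varepsilon\}$.

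The crucial verification is that $\Lambda$ is invariant under $\eta$. Given $h\in\Lambda$, the deformed family $s\mapsto\eta(h(s),\tau)$ is again a continuous map $\mathbb{S}^1\to\Gamma$, and because the flow stays in $H^1([-1,1];\mathbb{R}^3\setminus\Sigma)$ and keeps the endpoints fixed, each associated map $\tilde h_i$ is deformed continuously within the self-maps of $\mathbb{S}^2$ fixing $q^\pm$. Hence $\textnormal{deg}_{\mathbb{S}^2}(\tilde h_i)$ is unchanged by homotopy invariance of the degree, so $\eta(h(\cdot),\tau)\in\Lambda$. With this invariance, existence of a critical point at level $c_\beta$ is the standard contradiction: pick $h\in\Lambda$ with $\sup_s\mathcal{M}_\beta(h(s))\le c_\beta+\varepsilon$, deform to obtain $\eta(h(\cdot),\tau)\in\Lambda$ with sup below $c_\beta-\varepsilon$, contradicting the definition \eqref{deflivello}. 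Since $c_\beta\ge c_*>0$ by Lemma \ref{lemliv}, the resulting $u_\beta$ is non-constant, giving $\omega_\beta>0$ and hence a genuine parabolic solution via \eqref{defxbeta}.

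For the Morse index bound $\textnormal{j}(d^2\mathcal{M}_\beta(u_\beta))\le1$ I would invoke the refined min--max principle tying the dimension of the parameter space of the sweep-out to the Morse index of a suitable min-maxing critical point. Here the families are parametrized by the one-dimensional manifold $\mathbb{S}^1$, so one expects a critical point at level $c_\beta$ of Morse index at most $1$. Concretely, for a $\mathcal{C}^2$ functional satisfying Palais--Smale, if \emph{every} critical point at level $c_\beta$ had Morse index $\ge2$, then using the local normal form of the critical set from the Gromoll--Meyer/Marino--Prodi theory one can push an almost-optimal $\mathbb{S}^1$-family across each such critical point so as to lower the sup below $c_\beta$ while remaining in $\Lambda$ (the attendant deformations again preserving the degree conditions as above), a contradiction. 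This is the mechanism used in \cite{Gre88,Tan93}, which I would follow. I expect this last step --- producing a critical point with Morse index $\le1$, rather than merely \emph{some} critical point at level $c_\beta$ --- to be the main technical obstacle, since it requires the finer min--max machinery rather than the bare deformation lemma, and it is exactly the Morse index information that later rules out collisions.
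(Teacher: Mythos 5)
Your proposal follows essentially the same route as the paper's own (sketched) proof: a deformation lemma built on the compactness properties of Lemma \ref{mauplem}, invariance of the min-max class $\Lambda$ via homotopy invariance of the degree, and the standard min-max principle to get the critical value, followed by Tanaka-type arguments exploiting the one-dimensionality of the parameter space $\mathbb{S}^1$ to extract a critical point of Morse index at most $1$. The paper cites \cite{BahRab89,Tan93,Str08} for the deformation/min-max step and \cite{Tan93,Tan93b} for the Morse index bound, which are exactly the mechanisms you describe.
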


\begin{proof}[Sketch of the proof]
The fact that $c_\beta$ is a critical value for $\mathcal{M}_\beta$ follows from standard arguments of Critical Point Theory.
Indeed, the compactness properties of $\mathcal{M}_\beta$ collected in Lemma \ref{mauplem}
allow us to prove a Deformation Lemma on the lines of \cite[Proposition 1.17]{BahRab89}
or \cite[Proposition 1.6]{Tan93}. Then, a well-known min-max principle (cf. \cite[Theorem 4.2]{Str08}) yields the conclusion.

To prove that $\mathcal{M}_\beta^{-1}(c_\beta)$ contains a critical point $u_\beta$ with 
$\textnormal{j}\left( d^2 \mathcal{M}_\beta(u_\beta) \right) \leq 1$, one has to argue similarly as in the proof
of \cite[Proposition 1.5 (iii)]{Tan93}. The crucial point here is that the Morse index cannot exceed the value $1$ since,
in the definition of $c_\beta$, the one-dimensional manifold $\mathbb{S}^1$ is involved (see also \cite{Tan93b}). 
\end{proof}

\begin{remark}
We observe that the results in this section could be proved also using different min-max classes, as for instance
$\Lambda' = \left\{ h \in \mathcal{C}\left(\mathbb{S}^1,\Gamma\right) :  
\textnormal{deg}_{\mathbb{S}^2}(\tilde h_1 ) \neq 0  \right\}$. The reason for the choice of the class $\Lambda$
is that (as a direct consequence of the homotopy invariance of the degree) for any $h \in \Lambda$ there exists $s_h \in \mathbb{S}^1$ such that
\begin{equation}\label{segmento}
h(s_h)([-1,1]) \cap [c_1,c_2] \neq \emptyset,
\end{equation}  
where $[c_1,c_2] = \{ \lambda c_1 + (1-\lambda) c_2 : \lambda \in [0,1]\}$ is the segment joining $c_1$ and $c_2$.
This property, which will play a crucial role in our next arguments 
(see the final part of Section \ref{seclivello}), does not hold for min-max classes like $\Lambda'$.
\end{remark}

\subsection{Generalized solutions}\label{secgen}

Our goal now is to study the convergence for $\beta \to 0^+$ of the functions $u_\beta \in \Gamma$ given in Proposition \ref{valcritico}.
To this end, we state and prove the following lemma.

\begin{lemma}\label{limitebeta}
There exist $M_* > 0$ and $\omega_*, \omega^*$ with $\omega_*, \omega^* > 0$ such that, for any $\beta \in (0,1]$,
$$
\int_{-1}^1 \vert \dot u_\beta \vert^2 \leq M_* \quad \mbox{ and } \quad
\omega_* \leq \omega_\beta \leq \omega^*
$$ 
where $\omega_\beta$ is defined in \eqref{defomega}.
\end{lemma}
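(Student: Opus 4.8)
The plan is to reduce the four asserted bounds to two-sided control of the quantities
$a_\beta := \int_{-1}^1 \vert \dot u_\beta \vert^2$ and $b_\beta := \int_{-1}^1 V_\beta(u_\beta)$. Indeed, by the very definition of the Maupertuis functional and of $\omega_\beta$ in \eqref{defomega} we have $\mathcal{M}_\beta(u_\beta) = a_\beta b_\beta$ and $\omega_\beta^2 = a_\beta/(2b_\beta)$; moreover Lemma \ref{lemliv} already provides the uniform bound $c_* \le \mathcal{M}_\beta(u_\beta) = c_\beta \le c^*$ for every $\beta \in (0,1]$. Hence, once $a_\beta$ and $b_\beta$ are each bounded from above and from below away from zero (uniformly in $\beta$), the conclusion $\int_{-1}^1\vert\dot u_\beta\vert^2 = a_\beta \le M_*$ is immediate, and the two-sided bound on $\omega_\beta$ follows by feeding these four estimates into $\omega_\beta^2 = a_\beta/(2b_\beta)$.

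The easy half concerns $a_\beta$ from below. Since $u_\beta(\pm 1) = q^{\pm}$ with $q^+ \neq q^-$, Cauchy--Schwarz on $[-1,1]$ gives $\vert q^+ - q^- \vert = \bigl\vert \int_{-1}^1 \dot u_\beta \bigr\vert \le \sqrt{2}\, a_\beta^{1/2}$, so that $a_\beta \ge \tfrac12 \vert q^+ - q^- \vert^2 > 0$. Combined with $a_\beta b_\beta = c_\beta \le c^*$, this at once yields the upper bound $b_\beta \le 2c^*/\vert q^+ - q^-\vert^2 =: B_*$.

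The crucial and only nontrivial step is the lower bound $b_\beta \ge \mu_* > 0$, which is an adaptation of the coercivity argument (M1) in Lemma \ref{mauplem}. I argue by contradiction: suppose $b_{\beta_n} \to 0$ along some sequence. Since the average of $V_{\beta_n}(u_{\beta_n})$ over $[-1,1]$ equals $b_{\beta_n}/2$, there is $t_n \in [-1,1]$ with $V(u_{\beta_n}(t_n)) \le V_{\beta_n}(u_{\beta_n}(t_n)) \le b_{\beta_n}/2$. As $V$ blows up at the centers and is continuous elsewhere, it is bounded below by a positive constant $V_0$ on $\overline{B_K(0)} \setminus \Sigma$; therefore $V(u_{\beta_n}(t_n)) < V_0$ forces $\vert u_{\beta_n}(t_n)\vert > K$ for $n$ large, and \eqref{stimaV} then gives $\vert u_{\beta_n}(t_n)\vert \ge (2C_-/b_{\beta_n})^{1/\alpha}$. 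A restricted Cauchy--Schwarz on $[-1,t_n]$ (whose length is at most $2$) yields $a_{\beta_n} \ge \tfrac12\vert u_{\beta_n}(t_n)-q^-\vert^2 \ge c\, b_{\beta_n}^{-2/\alpha}$ for a positive constant $c$, so that $c_{\beta_n} = a_{\beta_n} b_{\beta_n} \ge c\, b_{\beta_n}^{\,1-2/\alpha} \to +\infty$, the exponent $1-2/\alpha$ being negative since $\alpha < 2$. This contradicts $c_{\beta_n} \le c^*$ and produces the uniform $\mu_* > 0$.

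With $b_\beta \ge \mu_*$ established, the last energy bound reads $a_\beta = c_\beta/b_\beta \le c^*/\mu_* =: M_*$, and inserting the four estimates into $\omega_\beta^2 = a_\beta/(2b_\beta)$ gives $\tfrac{\vert q^+-q^-\vert^2}{4B_*} \le \omega_\beta^2 \le \tfrac{M_*}{2\mu_*}$, i.e. the desired $\omega_* \le \omega_\beta \le \omega^*$. I expect the only real obstacle to be the lower bound on $b_\beta$: it is precisely here that the subquadratic homogeneity ($\alpha < 2$, whence $1-2/\alpha < 0$) is exploited, and one must be careful that the implication ``small average potential $\Rightarrow$ a point escaping to large radius $\Rightarrow$ large Dirichlet energy'' is quantitative and uniform in $\beta$; this uniformity is guaranteed by the monotonicity $V_\beta \ge V$ (valid for all $\beta \ge 0$) together with the uniform ceiling $c_\beta \le c^*$ from Lemma \ref{lemliv}.
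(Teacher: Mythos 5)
Your proof is correct and follows essentially the same route as the paper: both arguments rest on the uniform level bounds $c_* \le c_\beta \le c^*$ from Lemma \ref{lemliv}, the Cauchy--Schwarz lower bound on the kinetic term coming from $q^+ \neq q^-$, and the coercivity mechanism of (M1) in Lemma \ref{mauplem}, which exploits $\alpha < 2$ through \eqref{stimaV}. The only cosmetic difference is that the paper invokes (M1) as a black box to bound $\int_{-1}^1 \vert \dot u_\beta \vert^2$ from above and then reads both bounds on $\omega_\beta$ off the identity $\omega_\beta = \int_{-1}^1 \vert \dot u_\beta \vert^2 / \sqrt{2 c_\beta}$, whereas you re-run the same coercivity argument in-line to bound $\int_{-1}^1 V_\beta(u_\beta)$ from below and then use $\omega_\beta^2 = \int_{-1}^1 \vert \dot u_\beta \vert^2 / \bigl(2\int_{-1}^1 V_\beta(u_\beta)\bigr)$; these are equivalent reshufflings of the same estimates.
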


\begin{proof}
The fact that $\int_{-1}^1 \vert \dot u_\beta \vert^2$ is bounded follows immediately from (M1) of Lemma \ref{mauplem} together with Lemma \ref{lemliv}: indeed,
if $\int_{-1}^1 \vert \dot u_\beta \vert^2 \to +\infty$ then $\Vert u_\beta \Vert \to +\infty$, so that
$$
c^* \geq c_\beta = \mathcal{M}_\beta(u_\beta) \geq \mathcal{M}_0(u_\beta) \to +\infty,
$$
a contradiction. As a consequence $\Vert u_\beta \Vert$ is bounded and we easily conclude that $\omega_\beta$ is bounded from above, as well.
Finally, from Lemma \ref{lemliv} we have
$$
\omega_\beta = \frac{\int_{-1}^1 \vert \dot u_\beta \vert^2}{\sqrt{2 c_\beta}} \geq \frac{(\vert q^+ \vert - \vert q^- \vert)^2}{2\sqrt{2c_*}}
$$ 
so that $\omega_\beta$ is bounded away from zero. This concludes the proof.
\end{proof}

From Lemma \ref{limitebeta}, it follows that (up to subsequences) $\omega_\beta \to \omega_0 \in [\omega_*,\omega^*]$ and $u_\beta \rightharpoonup u_0$ weakly in $H^1$. Moreover
the set
\begin{equation}\label{defd0}
D_0 = u_0^{-1}(\Sigma)
\end{equation}
has zero measure; indeed, by Fatou's lemma and Lemma \ref{lemliv}
\begin{align*}
\int_{-1}^1 V(u_0) & \leq \liminf_{\beta \to 0^+}\int_{-1}^1 V(u_\beta) \leq \liminf_{\beta \to 0^+} \int_{-1}^1 V_\beta(u_\beta) \\
& = \liminf_{\beta \to 0^+}  \frac{c_\beta}{\int_{-1}^1 \vert \dot u_\beta \vert^2} \leq \frac{2c^*}{ (\vert q^+ \vert - \vert q^- \vert)^2}.
\end{align*}
Then, arguing as in \cite[p. 374]{Tan93}, we can prove that the function
\begin{equation}\label{defx0}
x_0(t) = u_0\left( \frac{t}{\omega_0}\right), \qquad t \in [-\omega_0,\omega_0],
\end{equation}
is a generalized parabolic solution of $\ddot x = \nabla V(x)$, that is:
\begin{itemize}
\item[i)] $x_0 \in \mathcal{C}([-\omega_0,\omega_0];\mathbb{R}^3)$ and $x_0(\pm \omega_0) = q^{\pm}$,
\item[ii)] the set $E_0 = x_0^{-1}(\Sigma) = \omega_0 \,D_0$ has zero measure,
\item[iii)] $x_0 \in \mathcal{C}^2([-\omega_0,\omega_0] \setminus E_0;\mathbb{R}^3 \setminus \Sigma)$ and, for any 
$t \in [-\omega_0,\omega_0] \setminus E_0$,
$$
\ddot x_0(t) = \nabla V(x_0(t)), \qquad \frac{1}{2}\vert \dot x_0(t) \vert^2 - V(x_0(t)) = 0.
$$
\end{itemize}
Of course, such a solution is actually a classical one whenever $D_0 = \emptyset$.

\begin{remark}\label{remlivellozero}
For further convenience, we also observe that, if $D_0 = \emptyset$, then $u_\beta \to u_0$ in $\mathcal{C}^2$, $u_0$ is
is a critical point of $\mathcal{M}_0$
and, moreover,
\begin{equation}\label{livellozero}
\mathcal{M}_0(u_0) = c_0,
\end{equation}
where $c_0$ is defined in \eqref{deflivello} (for $\beta = 0$). To prove \eqref{livellozero}, we first observe that
(as a consequence of the $\mathcal{C}^2$-convergence) 
$c_\beta = \mathcal{M}_\beta(u_\beta) \to \mathcal{M}_0(u_0)$; moreover, we have already noticed (see the proof of Lemma \ref{lemliv})
that $\beta \mapsto c_\beta$ is non-decreasing. Hence, $c_0 \leq \mathcal{M}_0(u_0)$. Now, assume by contraction that
$c_0 < \mathcal{M}_0(u_0)$; then, there exists $h \in \mathcal{C}(\mathbb{S}^1,\Gamma)$
such that, for any $\beta \in (0,1]$,
$$
\sup_{s \in \mathbb{S}^1} \mathcal{M}_0(h(s)) < \mathcal{M}_0(u_0) \leq \mathcal{M}_\beta(u_\beta) = c_\beta \leq \sup_{s \in \mathbb{S}^1} \mathcal{M}_\beta(h(s)). 
$$
On the other hand,
$$
\vert \mathcal{M}_\beta(h(s)) - \mathcal{M}_0(h(s)) \vert \leq \beta \int_{-1}^1 \left\vert \frac{d}{dt} h(s) \right\vert^2 \int_{-1}^1 U(h(s)) \to 0, 
$$
uniformly in $s \in \mathbb{S}^1$ for $\beta \to 0^+$, a contradiction.
\end{remark}

\subsection{Non-collision solutions}\label{noncoll}

In this section we show that $D_0 = \emptyset$. To this end, we assume by contradiction that $D_0 \neq \emptyset$ and we define
\begin{equation}\label{defnu}
\nu = \# D_0 > 0.
\end{equation}
We also set, for $\alpha \in [1,2)$, 
\begin{equation}\label{ialpha}
i(\alpha) = \max \left\{ k \in \mathbb{N} \, : \, k < \frac{2}{2-\alpha}\right\}.
\end{equation}
For comments about the meaning of this definition, we refer to \cite[Section 4]{Tan93b}.
Here we simply notice that $i(1) = 1$ and $i(\alpha) > 1$ for $\alpha \in (1,2)$.

The next proposition is analogous to \cite[Proposition 4.1]{Tan93}.

\begin{lemma}\label{morsetanaka}
It holds that
$$
\liminf_{\beta \to 0^+}\, \textnormal{j}(d^2 \mathcal{M}_\beta(u_\beta)) \geq i(\alpha) \nu. 
$$
\end{lemma}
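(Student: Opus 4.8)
The goal is a lower bound on the Morse index of $u_\beta$ (equivalently of $x_\beta$) in terms of the number of collisions $\nu$ of the limiting generalized solution $x_0$. My strategy would be to localize near each collision and produce, for each collision point, at least $i(\alpha)$ linearly independent directions along which the second variation $d^2\mathcal{M}_\beta(u_\beta)$ is negative, with the directions associated to distinct collisions having disjoint supports so that they are automatically independent and mutually orthogonal in the relevant inner product. Summing over the $\nu$ collisions then yields $\textnormal{j} \geq i(\alpha)\nu$ in the limit $\beta \to 0^+$.

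\textbf{Step 1 (localization and blow-up).}
Fix a collision instant $t_* \in D_0$, say $u_0(t_*) = c_i$. For small $\beta$, $u_\beta$ passes near $c_i$ on a short time interval; I would perform a blow-up (parabolic rescaling) centered at $c_i$ adapted to the homogeneity degree $-\alpha$ of the leading singular term $m_i/(\alpha|x-c_i|^\alpha)$, exactly as in \cite{Tan93b}. Under this rescaling the limiting profile is a collision (or ejection) arc of the \emph{purely} $-\alpha$-homogeneous problem, whose variational properties are catalogued in the Appendix. The perturbation $\beta U$ and the regular part $\Phi_i$ become negligible in the rescaled limit, so the local second variation converges to that of the homogeneous collision arc.

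\textbf{Step 2 (counting negative directions of the homogeneous arc).}
The crux is that a collision arc of the $-\alpha$-homogeneous problem has Morse index at least $i(\alpha)$, where $i(\alpha)$ is exactly the spectral count in \eqref{ialpha}. This is where the definition $i(\alpha)=\max\{k\in\mathbb{N}: k<2/(2-\alpha)\}$ enters: the relevant Jacobi/eigenvalue problem along the homogeneous arc (governed by the angular Laplacian on $\mathbb{S}^2$ together with the radial homogeneity) produces unstable modes indexed precisely up to this threshold, reflecting the ``looping'' scattering angle $2\pi/(2-\alpha)$ discussed in the Introduction. I would extract $i(\alpha)$ compactly supported test variations realizing negativity of the second variation of the homogeneous functional, then transplant them back (via the inverse blow-up and a cutoff) to obtain $i(\alpha)$ variations supported in a small neighbourhood of $t_*$ on which $d^2\mathcal{M}_\beta(u_\beta)$ is negative definite, for all sufficiently small $\beta$.

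\textbf{Step 3 (assembling and passing to the limit).}
Since \eqref{limiteinf}-type separation guarantees the $\nu$ collision instants are distinct, I would choose the neighbourhoods pairwise disjoint, so the $i(\alpha)\nu$ test variations have disjoint supports and are therefore linearly independent and $d^2\mathcal{M}_\beta(u_\beta)$-orthogonal. This forces $\textnormal{j}(d^2\mathcal{M}_\beta(u_\beta)) \geq i(\alpha)\nu$ for $\beta$ small, and taking $\liminf_{\beta\to 0^+}$ gives the claim. The main obstacle is Step 2: making the blow-up limit rigorous at the level of the \emph{second variation} (not just the solution), controlling the error terms from $\Phi_i$, $W$, and $\beta U$ uniformly, and correctly identifying the negative eigenspace of the homogeneous Jacobi operator so that its dimension is exactly $i(\alpha)$. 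The bookkeeping that the transplanted variations remain negative directions after the cutoff, uniformly in $\beta$, is the delicate quantitative point.
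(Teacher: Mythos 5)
Your overall skeleton---blow up at each collision instant, extract $i(\alpha)$ negative directions per collision from a limiting profile, take disjoint supports and sum over the $\nu$ collisions---is indeed the architecture of the paper's proof (which follows Tanaka \cite{Tan93}). However, Steps 1--2 contain a genuine error that the paper's argument is specifically designed to avoid. Your claim that ``the perturbation $\beta U$ \dots become[s] negligible in the rescaled limit'' is false, and this is exactly the delicate point. The natural blow-up scale is $\delta_\beta = \min_t \vert u_\beta(t) - c_1\vert$, and rescaling $v_\beta(t) = \delta_\beta^{-1}\bigl(x_\beta(\delta_\beta^{1+\alpha/2}t + \tau_\beta\omega_\beta) - c_1\bigr)$ turns the equation into
$$
\ddot v_\beta = -\frac{m_1 v_\beta}{\vert v_\beta \vert^{\alpha+2}} - \frac{\beta}{\delta_\beta^{2-\alpha}}\,\frac{m_1 v_\beta}{\vert v_\beta \vert^{4}} + \delta_\beta^{1+\alpha}\,\nabla\Phi_1(\delta_\beta v_\beta + c_1).
$$
The regular part $\Phi_1$ does disappear, but the strong-force term carries the factor $\beta/\delta_\beta^{2-\alpha}$, and nothing forces this to vanish: $\beta\to 0$ and $\delta_\beta \to 0$ simultaneously, and the entire reason $u_\beta$ avoids collision is that $\beta U$ is \emph{not} negligible at the scale $\delta_\beta$. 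The paper tracks $d = \lim_{\beta\to 0^+}\beta/\delta_\beta^{2-\alpha} \in [0,+\infty]$: when $d<+\infty$ the limit profile solves $\ddot v_0 = -m_1 v_0/\vert v_0\vert^{\alpha+2} - d\,m_1 v_0/\vert v_0\vert^{4}$, a strong-force perturbation of the homogeneous problem, and when $d=+\infty$ a different normalization is needed (deferred to \cite{Tan93}). As written, your argument covers only the case $d=0$.

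Relatedly, the limiting object is misidentified. Since $\vert v_\beta(0)\vert = 1$ and $\vert v_\beta(t)\vert \geq 1$, the blow-up limit is an entire zero-energy \emph{non-collision} solution grazing the unit sphere, not ``a collision (or ejection) arc of the purely $-\alpha$-homogeneous problem''; a collision arc would not even carry the well-defined second variation you need. Consequently the Appendix cannot serve as the source of the index count: it catalogues existence, uniqueness and action levels of parabolic arcs, but contains no Morse-index information. The $i(\alpha)$ negative directions are built along the non-collision profile by out-of-plane variations $\varphi_i\, e$ (with $e$ orthogonal to the plane of the profile), and the fact that there are $i(\alpha)$ of them---reflecting the total turning angle $2\pi/(2-\alpha)$---is Tanaka's result \cite[Sections 3--4]{Tan93b}, exactly as the paper uses again in Section \ref{limiteinfsec}. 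With these two corrections (tracking the ratio $d$, and working along the non-collision profile with Tanaka's spectral count rather than the Appendix), your Step 3 goes through as in the paper.
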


\begin{proof}[Sketch of the proof]
The proof follows the same lines of the one of \cite[Proposition 4.1]{Tan93}, investigating the asymptotic behavior of the 
Morse indexes $\textnormal{j}(d^2 \mathcal{M}_\beta(u_\beta))$ for $\beta \to 0^+$ via a blow-up argument.
The minor difference here comes from the proof of the convergence of the blow-up sequence and, 
for the reader's convenience, 
we sketch some details (similar arguments will also appear in the subsequent sections). 

Let $\tau_0 \in D_0 \subset (-1,1)$ and assume, to fix the ideas,
$u_0(\tau_0) = c_1$. Then, using the fact that $D_0$ has zero measure,
it is possible to find $\tau^-_\beta, \tau_\beta, 
\tau^+_\beta \in (-1,1)$ such that 
$\tau^-_\beta < \tau_\beta < \tau^+_\beta$, $\delta_\beta := \vert u_\beta(\tau_\beta) - c_1 \vert = \min_t \vert u_\beta(t) - c_1 \vert \to 0^+$,
$$
\vert u_\beta(\tau^\pm_\beta) - c_1 \vert = \delta^*
\quad \mbox{ and }
\quad \vert u_\beta(t) - c_1 \vert \leq \delta^*, \quad \mbox{ for any } t \in [\tau^-_\beta,\tau^+_\beta].
$$   
Since $u_\beta \to u_0$ uniformly, both $\tau_\beta - \tau^-_\beta$ and $\tau^+_\beta - \tau_\beta$ are bounded away from zero.
Let us define
$$
d= \lim_{\beta \to 0^+}\frac{\beta}{\delta_\beta^{2-\alpha}};
$$
we give the details only in the case $d < +\infty$ (for $d = +\infty$, see \cite{Tan93}). Let us consider $x_\beta$ as defined in \eqref{defxbeta}
and set
$$
v_\beta(t) = \frac{1}{\delta_\beta}\left(x_\beta\left( \delta_\beta^{1+\alpha/2}t + \tau_\beta\, \omega_\beta \right) - c_1 \right), \qquad t \in [-\gamma_\beta,\sigma_\beta],
$$
where
$$
-\gamma_\beta = \frac{\left( \tau^-_\beta - \tau_\beta \right)\omega_\beta }{\delta_\beta ^{1+\alpha/2}} \quad 
\mbox{and} \quad  
\sigma_\beta = \frac{\left( \tau^+_\beta - \tau_\beta \right)\omega_\beta }{\delta_\beta ^{1+\alpha/2}}.
$$
Notice that $\vert v_\beta(0) \vert = 1$, $\vert v_\beta(t) \vert \geq 1$ and $\vert \delta_\beta v_\beta(t) + c_1 \vert \leq \delta^*$ for
$t \in  [-\gamma_\beta,\sigma_\beta]$. 
An easy computation shows that, writing $V$ as in \eqref{vsing}, $v_\beta$ satisfies
$$
\ddot v_\beta = - \frac{m_1 v_\beta}{\vert v_\beta \vert^{\alpha+2}} - \frac{\beta}{\delta_\beta^{2-\alpha}} \frac{m_1 v_\beta}{\vert v_\beta \vert^4}
+\delta_\beta^{1+\alpha}\, \nabla \Phi_1(\delta_\beta v_\beta + c_1)
$$
and
$$
\frac{1}{2}\vert \dot v_\beta \vert^2 = \frac{m_1}{\alpha \vert v_\beta \vert^{\alpha}} + \frac{1}{2} \frac{\beta}{\delta_\beta^{2-\alpha}} \frac{m_1}{\vert v_\beta \vert^2} + \delta_\beta^{\alpha} \Phi_1(\delta_\beta v_\beta + c_1).
$$
Also, recalling that $\omega_\beta$ are bounded away from zero (see Lemma \ref{limitebeta}) we have that $-\gamma_\beta \to -\infty$ and $\sigma_\beta \to +\infty$. As a consequence, it is easy to see
that $v_\beta \to v_0$ in $\mathcal{C}^2_{\textnormal{loc}}(\mathbb{R})$, where $v_0$ satisfies 
$$
\ddot v_0 = - \frac{m_1 v_0}{\vert v_0 \vert^{\alpha+2}} - d \, \frac{m_1 v_0}{\vert v_0 \vert^4}
$$ 
and
$$
\frac{1}{2}\vert \dot v_0 \vert^2 = \frac{m_1}{\alpha \vert v_0 \vert^{\alpha}} + \frac{d}{2} \frac{m_1}{\vert v_0 \vert^2}.
$$
From now on, the proof follows exactly the one in \cite{Tan93}.	
\end{proof}

In view of the above lemma, and recalling that $\textnormal{j}(d^2 \mathcal{M}_\beta(u_\beta)) \leq 1$
(see Proposition \ref{valcritico}) we immediately see that $\nu = 0$ whenever $\alpha > 1$, contradicting \eqref{defnu}.
Hence, the proof that $D_0$ is empty is concluded in this case.
\smallbreak
If $\alpha = 1$, Lemma \ref{morsetanaka} (again combined with Proposition \ref{valcritico}) gives $\nu = 1$
and an additional argument is needed, requiring $\vert q^+ \vert = \vert q^- \vert > K$.
Let $t_0$ be the (unique) instant such that
$x_0(t_0) \in \Sigma$ and, to fix the ideas, assume that $x_0(t_0) = c_1$. On one hand, arguing as in the proof of Theorem
\cite[Theorem 0.1]{Tan93b}, we can see that the limit
\begin{equation}\label{limitetanaka}
\lim_{t \to t_0} \frac{x_0(t) - c_1}{\vert x_0(t) - c_1 \vert} = \xi_0 \in \mathbb{S}^2
\end{equation}
exists (that is, both the limits for $t \to t_0^{\pm}$ exist and they are equal).
On the other hand, we can regularize the equation as described in \cite{Spe69}.
More precisely, we set  
$$
\tau(t) = \int_{t_0}^t \frac{d\zeta}{\vert x_0(\zeta) - c_1 \vert}, \qquad t \in [-\omega_0,\omega_0],
$$
and we denote by $t(\tau)$ its inverse function, defined on the interval
$[\tau^-,\tau^+]$ with $\tau^{\pm} = \int_{t_0}^{\pm \omega_0}  d\zeta/\vert x_0(\zeta) - c_1 \vert$; moreover, for any $\tau \neq 0$, let
$$
\begin{array}{l}
\vspace{0.1cm}
x(\tau) = x_0(t(\tau)) - c_1 \\
\vspace{0.1cm}
\displaystyle{y(\tau) = \frac{d}{d\tau}x(\tau)} \\
\displaystyle{w(\tau) = \frac{1}{\vert x(\tau) \vert} \left[ \left( \frac{d}{d\tau} \vert x(\tau) \vert\right)  y(\tau) - m_1 
x(\tau) \right]}.
\end{array}
$$  
Then, the function $z(\tau) = (x(\tau),y(\tau),w(\tau))$ satisfies the differential equation
$$
z'(\tau) = F(z(\tau)), \qquad \tau \neq 0
$$
where $F = (F_1,F_2,F_3) \in \mathcal{C}^\infty((\mathbb{R}^3 \setminus \Sigma') \times \mathbb{R}^3 \times \mathbb{R}^3)$ with
$\Sigma' = \{ c_2 - c_1, \ldots, c_N - c_1\}$,
$$
\begin{array}{l}
\vspace{0.2cm}
F_1(z) = y \\
\vspace{0.2cm}
\displaystyle{F_2(z) = w + \vert x \vert^2 \nabla \Phi_1(x + c_1)} \\
\displaystyle{F_3(z) = (x \cdot y) \nabla \Phi_1(x + c_1) + \big( 2 \Phi_1(x+c_1) + x \cdot \nabla \Phi_1(x+c_1)\big) y} 
\end{array}
$$
and $\Phi_1$ given in \eqref{vsing}. Using the estimates in \cite[Section 7]{Spe69}, it follows that
the limit $z_0:=\lim_{\tau \to 0} z(\tau)$ exists
with
$$
z_0 = (0,0,c_{m_1} \xi_0),
$$
where $c_{m_1}$ is a suitable non-zero constant depending only on $m_1$
and $\xi_0$ is as in \eqref{limitetanaka}. Hence, $z$ satisfies the Cauchy problem
$$
z' = F(z), \qquad z(0) = z_0
$$
for any $\tau \in [\tau^-,\tau^+]$. 
Since $F$ fulfills
$$
\begin{array}{l}
\vspace{0.2cm}
F_1(x,-y,w) = -F_1(x,y,w) \\
\vspace{0.2cm}
F_2(x, -y,w) = F_2(x,y,w) \\
F_3(x,-y,w) = -F_3(x,-y,w) 
\end{array}
$$
for any $(x,y,w) \in (\mathbb{R}^3 \setminus \Sigma') \times \mathbb{R}^3 \times \mathbb{R}^3$,
it is immediate to see that $x$ satisfies
$$
x(\tau) = x(-\tau), \quad \mbox{ for} \; \vert\tau\vert \leq \min\{-\tau^-,\tau^+\}.
$$
If $-\tau^- = \tau^+$, this is impossible whenever $q^- \neq q^+$. Hence, we can assume that
$-\tau^- \neq \tau^+$ and, to fix the ideas, that $-\tau^- < \tau^+$; then $t_0 < 0$ and
$$
x_0(t) = x_0(2t_0-t), \quad \mbox{ for every } t \in [-t_0-\omega_0, t_0 + \omega_0]. 
$$
In particular
$$
\vert x_0(-\omega) \vert = \vert q^- \vert = \vert x_0(2t_0 + \omega) \vert.
$$
Since $\vert q^- \vert > K$ is large enough, Lemma \ref{lemlj} implies that, 
defining $r_0(t) = \vert x_0(t) \vert$, it holds $\dot r_0(-\omega_0) < 0$.
Therefore, $\dot r_0(2t_0 + \omega_0) > 0$ so that, again in view of Lemma \ref{lemlj},
$r_0(t) > \vert q^- \vert$ for any $t \in (2t_0+\omega_0,\omega_0]$, contradicting the fact that
$r_0(\omega_0) = \vert q^+ \vert = \vert q^- \vert$.

\subsection{Morse index and level estimates}\label{seclivello}

From now, due to our assumption $q^{\pm} = R \xi^{\pm}$, we need to emphasize the dependence on $R$ in our notation. Accordingly, 
the function $u_{\beta}$ as well as the time-interval $\omega_\beta$ appearing in Section \ref{sezminmax} will be denoted by 
$u_{\beta,R}$ and $\omega_{\beta,R}$, respectively. Also, with reference to Section \ref{secgen}, we will write
$u_R$ and $\omega_R$ for the limits of $u_{\beta,R}$ and $\omega_{\beta,R}$ as $\beta \to 0^+$, previously denoted by $u_0$ and $\omega_0$. Finally, $
x_R(t) = u_R(t/\omega_R)$ for $t \in [-\omega_R,\omega_R]$ (compare with \eqref{defx0}). 
\smallbreak
We first prove the Morse index formula \eqref{morsefix}. Since $u_{\beta,R} \to u_R$ in $\mathcal{C}^2$ (see Remark \ref{remlivellozero}), it is easy to see that
$\textnormal{j}(d^2 \mathcal{M}_0(u_R)) \leq 1$. On the other hand, a straightforward computation shows that
$$
d^2 \mathcal{M}_0(u_R)[v,v] = \left(\int_{-\omega_R}^{\omega_R} \vert \dot x_R \vert^2 \right) d^2 \mathcal{A}_{[-\omega_R,\omega_R]}(x_R)[y,y] - 4 \left( \int_{-\omega_R}^{\omega_R} \nabla V(x_R) \cdot y\right)^2,
$$
where $v \in H^1_0([-1,1])$ and $y(t) = v(t/\omega_R)$ for $t \in [-\omega_R,\omega_R]$. Hence,
$$
\textnormal{j}(d^2 \mathcal{A}_{[-\omega_R,\omega_R]}(x_R)) \leq 1,
$$
as desired.
\smallbreak
Now, we prove the estimate from above in \eqref{stimalivello}.
To this end, we first recall the notation in Section \ref{sezminmax} and we choose an arbitrary 
$\gamma \in \Lambda_{K\xi^{\pm}}$. Then, we take
$\eta^+:[1,+\infty) \to [K,+\infty)$ and $\eta^{-}: (-\infty,-1] \to [K,+\infty)$  as the solutions of the Cauchy problems
$$
\dot\eta^{\pm} = \pm\sqrt{2 V(\xi^{\pm}\eta^{\pm})}, \qquad \eta^{\pm}(\pm 1) = K
$$
and we define $\tau^+_R,\tau^-_R$ (for $R > K$) as the unique points such that $\eta^{\pm}(\tau^{\pm}_R) = R$.
As a next step, we set, for any $s \in \mathbb{S}^1$,
$$
\zeta(s)(t) = 
\begin{cases}
\xi^+ \eta^+(t) & \; \mbox{ for } t \in [1,\tau_R^+] \\
\gamma(s)(t) & \; \mbox{ for } t \in [-1,1] \\
\xi^- \eta^-(t) & \; \mbox{ for } t \in [\tau^-_R,-1]
\end{cases}
$$
and
$$
h(s)(t) = \zeta(s)\left( \tau_R^- + \frac{1}{2} (\tau^+_R - \tau^-_R) (t+1)\right), \quad
\mbox{ for any } t \in [-1,1],
$$
in such a way that $h \in \Lambda_{R\xi^{\pm}}$. We also set,
for any $T > 0$, $x_T(s)(t)=h(s)(t/T)$ for $t \in [-T,T]$.
We have, for any $s \in \mathbb{S}^1$,
\begin{align*}
\sqrt{\mathcal{M}_0(h(s))} & = \frac{1}{\sqrt{2}} \inf_{T > 0} \mathcal{A}_{[-T,T]}(x_T(s))
\leq \frac{1}{\sqrt{2}}\int_{\tau^-_R}^{\tau^+_R} 
\left( \frac{1}{2} \vert \dot \zeta(s)(t)\vert^2 + V(\zeta(s)(t))\right)\,dt \\
& \leq 
\frac{1}{\sqrt{2}}\int_{\tau^-_R}^{-1} 
\left( \frac{1}{2} \vert \dot \eta^-(t)\vert^2 + V(\xi^-\eta^-(t))\right)\,dt
+ \frac{1}{\sqrt{2}}\mathcal{A}_{[-1,1]}(\gamma(s))
\\
& \quad + 
\frac{1}{\sqrt{2}}\int_{1}^{\tau^+_R} 
\left( \frac{1}{2} \vert \dot \eta^+(t)\vert^2 + V(\xi^+\eta^+(t))\right)\,dt \\
& \leq M_+ + \int_{-1}^{\tau_R^-} \sqrt{V(\xi^-\eta^-(t))} \dot \eta^-(t) \,dt + \int_{1}^{\tau_R^+} \sqrt{V(\xi^+\eta^+(t))} \dot \eta^+(t) \, dt
\\ & = M_+ + \int_{K}^R \sqrt{V(\xi^- r)}\,dr + \int_{K}^R \sqrt{V(\xi^+ r)}\,dr,
\end{align*}
with $M_+ > 0$ a suitable constant not depending on $R$ and $s$.
Now, using \eqref{stimak} we find
$$
\sqrt{V(\xi^{\pm}r)} \leq \sqrt{\frac{m}{\alpha}} \frac{1}{r^{\alpha/2}} + \frac{C_+}{r^{2 + \alpha/2}}, \quad
\mbox{ for every } r \geq K,
$$
so that, with a simple computation,
$$
\sqrt{\mathcal{M}_0(h(s))} \leq \sqrt{\frac{m}{\alpha}}\frac{4}{2-\alpha} R^{1-\alpha/2} + M_+ + \frac{4C_+}{2+\alpha}, \quad \
\mbox{ for every } s \in \mathbb{S}^1.
$$
Recalling the definition of $c_0$ given in \eqref{deflivello}, the fact $\mathcal{M}_0(u_R) = c_0$ (compare with \eqref{livellozero}) and the well-known relation
\begin{equation}\label{azione}
\sqrt{\mathcal{M}_0(u_R)} = \frac{1}{\sqrt{2}}\mathcal{A}_{[-\omega_R,\omega_R]}(x_R),
\end{equation}
we infer that
$$
\mathcal{A}_{[-\omega_R,\omega_R]}(x_R) \leq 
\left(\sqrt{\frac{2m}{\alpha}}\frac{4}{2-\alpha}\right) R^{1-\alpha/2} + \sqrt{2} \left( M_+ + \frac{4C_+}{2+\alpha} \right).
$$
Therefore, the estimate from above in \eqref{stimalivello} holds for any 
\begin{equation}\label{st1}
M > \sqrt{2} \left( M_+ + \frac{4C_+}{2+\alpha} \right). 
\end{equation}
\smallbreak
Finally, we prove the estimate from below in \eqref{stimalivello}. As a first step, we prove that for any 
$u \in \Gamma_{R \xi^\pm}$ satisfying
\begin{equation}\label{minimok}
\min_t \vert u(t) \vert \leq K
\end{equation}
it holds that
\begin{equation}\label{stimamau}
\sqrt{\mathcal{M}_0(u)} \geq \sqrt{\frac{m}{\alpha}} \frac{4}{2-\alpha} R^{1-\alpha/2} - \frac{4C_+}{2+\alpha} - \sqrt{\frac{m}{\alpha}}\frac{4}{2-\alpha}K^{1-\alpha/2}.
\end{equation}
To prove this, we first observe that \eqref{minimok} implies the existence of
$t_1,t_2 \in (-1,1)$ such that $\vert u(t_i) \vert = K$
and $\vert u(t) \vert \geq K$ for $t \in [-1,t_1] \cup [t_2,1]$.
Now, we introduce the notation
$$
\mathcal{L}(u) = \int_{-1}^1 \vert \dot u(t) \vert \sqrt{V(u(t))}\,dt;
$$
writing $r(t) = \vert u(t) \vert$, we obtain
\begin{align*}
\sqrt{\mathcal{M}_0(u)} & \geq \mathcal{L}(u) \geq \int_{[-1,t_1] \cup [t_2,1]} \vert \dot u(t) \vert \sqrt{V(u(t))} \,dt \\
& \geq 
\sqrt{\frac{m}{\alpha}}\int_{[-1,t_1] \cup [t_2,1]} \vert \dot r(t) \vert r^{-\alpha/2}(t)\,dt
- C_+\int_{[-1,t_1] \cup [t_2,1]} \vert \dot r(t) \vert r^{-2-\alpha/2}(t)\,dt,
\end{align*} 
where the last inequality follows from \eqref{stimak}. Now, on one hand
$$
\int_{[-1,t_1] \cup [t_2,1]} \vert \dot r(t) \vert r^{-2-\alpha/2}(t)\,dt = 2 \int_K^R r^{-2-\alpha/2}\,dr \leq
\frac{4}{\alpha+2}.  
$$
On the other hand,
$$
\int_{-1}^{t_1} \vert \dot r(t) \vert r^{-\alpha/2}(t)\,dt
\geq \inf_{r \in \mathcal{R}} \int_{-1}^{t_1} \vert \dot r(t) \vert r^{-\alpha/2}(t)\,dt = \frac{2}{2-\alpha}(R^{1-\alpha/2}-K^{1-\alpha/2}),
$$
where $\mathcal{R} = \{ r \in H^1([-1,t_1]) : r(-1) = R, \,r(t_1) = K\}$, and analogous estimate holds for 
$
\int_{t_2}^{1} \vert \dot r(t) \vert r^{-\alpha/2}(t)\,dt$.
Summing up, \eqref{stimamau} is proved.

To conclude, we recall that that for any $h \in \Lambda$, there exists $s_h \in \mathbb{S}^1$ such that
$h(s_h)([-1,1]) \cap [c_1,c_2] \neq \emptyset$ (see \ref{segmento}); in particular, $h(s_h)$ satisfies \eqref{minimok}.  Hence
$$
\sup_{s \in \mathbb{S}^1} \sqrt{\mathcal{M}_0(h(s))} \geq \sqrt{\frac{m}{\alpha}} \frac{4}{2-\alpha} R^{1-\alpha/2} - \frac{4C_+}{2+\alpha} - \sqrt{\frac{m}{\alpha}}\frac{4}{2-\alpha}K^{1-\alpha/2}. 
$$
Recalling the definition of $c_0$ given in \eqref{deflivello}, the fact $\mathcal{M}_0(u_R) = c_0$ (compare with \eqref{livellozero}) and
\eqref{azione}, we obtain
$$
\mathcal{A}_{[-\omega_R,\omega_R]}(x_R) \geq \sqrt{\frac{2m}{\alpha}} \frac{4}{2-\alpha} R^{1-\alpha/2} - \sqrt{2}\left(\frac{4C_+}{2+\alpha} + \sqrt{\frac{m}{\alpha}}\frac{4}{2-\alpha}K^{1-\alpha/2}\right).
$$
Hence, the estimate from below in \eqref{stimalivello} holds for any 
\begin{equation}\label{st2}
M > \sqrt{2}\left(\frac{4C_+}{2+\alpha} + \sqrt{\frac{m}{\alpha}}\frac{4}{2-\alpha}K^{1-\alpha/2}\right).
\end{equation}
\smallbreak
Combining \eqref{st1} and \eqref{st2}, we conclude.

\section{Proof of the main result}\label{s5}

In this section we prove that the parabolic solutions $x_R$ given by Theorem \ref{thfix} satisfy the assumptions of 
Proposition \ref{approssimazione}, namely, \eqref{limitesup}, \eqref{limiteinf} and \eqref{stimatempi}.
In this way, we obtain the thesis of Theorem \ref{thmain}.

\subsection{Proof of \eqref{stimatempi}}\label{sectempi}

Of course, we assume here that $\min_t \vert x_R(t) \vert < K$ and we take $t^-_R, t^+_R$ as in
\eqref{stimatempi}. Notice that, by Lemma \ref{lemlj}, $\vert x_R(t) \vert \leq K$ for any 
$t \in [t^-_R,t^+_R]$ so that 
$$
V(x_R(t)) \geq V_K := \inf_{\vert x \vert \leq K} V(x) > 0, \quad \mbox{ for every } t \in [t^-_R,t^+_R].
$$
Then, using the conservation of the energy we can estimate $\mathcal{A}_{[-\omega_R,\omega_R]}(x_R)$ as follows:
\begin{align*}
\mathcal{A}_{[-\omega_R,\omega_R]}(x_R) & = \int_{[-\omega_R,t^-_R] \cup [t^+_R,\omega_R]}2V(x_R(t))\,dt + \int_{[t^-_R,t^+_R]}2V(x_R(t))\,dt \\
& \geq \int_{[-\omega_R,t^-_R] \cup [t^+_R,\omega_R]}\vert \dot x_R(t) \vert \sqrt{V(x_R(t))}\,dt + 2\left( t^+_R- t^-_R \right)V_K.
\end{align*}
Now, arguing as in the proof of \eqref{stimamau} we can see that 
$$
\int_{[-\omega_R,t^-_R] \cup [t^+_R,\omega_R]}\vert \dot x_R(t) \vert \sqrt{V(x_R(t))}\,dt \geq \sqrt{\frac{2m}{\alpha}} \frac{4}{2-\alpha} R^{1-\alpha/2} - \frac{M}{\sqrt{2}},
$$
so that
$$
2\left( t^+_R- t^-_R \right)V_K \leq \mathcal{A}_{[-\omega_R,\omega_R]}(x_R) - \sqrt{\frac{2m}{\alpha}} \frac{4}{2-\alpha} R^{1-\alpha/2} + \frac{M}{\sqrt{2}}.
$$
Recalling the estimate from above in \eqref{stimalivello}, we conclude.

\subsection{Proof of \eqref{limiteinf}}\label{limiteinfsec}
 
By contradiction, assume that, for instance,
\begin{equation}\label{deltar}
\delta_R := \min_t \vert x_R(t) - c_1 \vert = \vert x_R(\tau_R) - c_1 \vert \to 0^+.
\end{equation}
Setting $J_R(t) = \tfrac{1}{2}\vert x_R(t) - c_1 \vert^2$, we can perform computations analogous to the ones leading to \eqref{eqenergia} and
\eqref{eqpolare}; in particular, writing $V$ as in \eqref{vsing} and using \eqref{deltastar2}, we can easily see that
$$
\ddot J_R(t) > 0, \quad \mbox{ whenever} \quad \vert x_R(t) - c_1 \vert \leq \delta^*.
$$
Then, there exist $\tau^-_R,\tau^+_R$ such that 
$\tau^-_R < \tau_R < \tau^+_R$ and
$$
\vert x_R(\tau_R^\pm) - c_1 \vert = \delta^*
\quad \mbox{ and } \quad
\vert x_R(t) - c_1 \vert \leq \delta^*, 
\quad \mbox{ for any } t \in [\tau^-_R,\tau^+_R];
$$ 
moreover, for $r_R(t) := \vert x_R(t) - c_1 \vert$ it holds that
$\dot r_R(t) < 0$ for $t \in (\tau^-_R,\tau_R)$ and
$\dot r_R(t) > 0$ for $t \in (\tau_R,\tau^+_R)$.
As a consequence, using the conservation of the energy and \eqref{deltastar3}, we obtain
\begin{align*}
\tau^+_R - \tau_R & \geq \int_{\tau_R}^{\tau^+_R}\frac{\dot r_R(t)}{\sqrt{2V(x_R(t))}}\,dt \geq \sqrt{\frac{\alpha}{3m_1}}
\int_{\tau_R}^{\tau^+_R}\dot r_R(t)r_R(t)^{\alpha/2}\,dt \\
& =  \sqrt{\frac{\alpha}{3m_1}}\frac{2}{2+\alpha} \left( \delta_*^{1+\alpha/2} - \delta_R^{1+\alpha/2} \right),
\end{align*}
implying that $\tau^+_R - \tau_R$ is bounded away from zero for $R$ large; 
of course, the same holds for
$\tau_R - \tau^-_R$.
 
As a next step, we define
the function $v_R$ as
$$
v_R(t) = \frac{1}{\delta_R}\left(x_R\left( \delta_R^{1+\alpha/2}t + \tau_R \right) - c_1 \right), \qquad t \in [-\gamma_R,\sigma_R],
$$
where
$$
-\gamma_R = \frac{\tau^-_R - \tau_R}{\delta_R ^{1+\alpha/2}} \qquad 
\mbox{and} \qquad  
\sigma_R = \frac{\tau^+_R - \tau_R}{\delta_R^{1+\alpha/2}}.
$$
Notice that $\vert v_R(0) \vert = 1$, $\vert v_R(t) \vert \geq 1$ and $\vert \delta_R v_R(t) + c_1 \vert \leq \delta^*$ for
$t \in  [-\gamma_R,\sigma_R]$. The function $v_R$ satisfies
$$
\ddot v_R = - \frac{m_1 v_R}{\vert v_R \vert^{\alpha+2}} +\delta_R^{1+\alpha} \nabla \Phi_1(\delta_R v_R + c_1)
$$
and
$$
\frac{1}{2}\vert \dot v_R \vert^2 = \frac{m_1}{\alpha \vert v_R \vert^{\alpha}} + \delta_R^{\alpha} \Phi_1(\delta_R v_R + c_1).
$$ 
Moreover, in view of the above discussion, $-\gamma_R \to -\infty$ and $\sigma_R \to +\infty$. In view of these facts, it is easy to see
that $v_R \to v_\infty$ for $R \to +\infty$ in $\mathcal{C}^2_{\textnormal{loc}}(\mathbb{R})$, with $v_\infty$ an entire parabolic solution of the problem
$$
\ddot v_\infty = - \frac{m_1 v_\infty}{\vert v_\infty \vert^{\alpha+2}}.
$$
\smallbreak
We now continue the proof by showing that, as a consequence of
the above blow-up analysis,
\begin{equation}\label{morseblowup}
\liminf_{R \to +\infty} \textnormal{j}\left( d^2 \mathcal{A}_{[\tau^-_R,\tau^+_R]}(x_R)\right) \geq i(\alpha),
\end{equation}
with $i(\alpha)$ defined in \eqref{ialpha}.
Of course, $v_\infty$ is contained in a plane in $\mathbb{R}^3$
(say, $v_\infty(t) \cdot e \equiv 0$, for a suitable $e \in \mathbb{R}^3$); moreover, 
from \cite[Sections 3-4]{Tan93b} we know that for $L > 0$ large enough
there exist $i(\alpha)$ linearly independent $\varphi_1,\ldots,\varphi_{i(\alpha)} \in \mathcal{C}^\infty_c((-L,L);\mathbb{R})$ such that
\begin{equation}\label{autov}
\int_{-L}^L \left(
\dot \varphi_i^2(t) - \frac{m_1}{\alpha \vert v_\infty(t) \vert^{\alpha+2}}\varphi_i^2(t)\right)\,dt < 0, \qquad i=1,\ldots,i(\alpha).
\end{equation}
Notice that $\varphi_i \in \mathcal{C}^\infty_c((-\gamma_R.\sigma_R);\mathbb{R})$ for $R$ large.
We define, for $i=1,\ldots,i(\alpha)$ and $s \in [-\gamma_R,\sigma_R]$,
$$
k_{i}(s) = \varphi_i(s) e
$$
and, for $t \in [\tau_R^-,\tau_R^+]$,
$$
h_{i,R}(t) =  \delta_R \,\varphi_i\left( \frac{t-\tau_R}{\delta_R^{1+\alpha/2}} \right) e.
$$
An elementary computation shows that
\begin{align*}
\delta_R^{\alpha/2-1} & d^2 \mathcal{A}_{[\tau^-_R,\tau^+_R]}(x_R) [h_{i,R},h_{i,R}]
= \int_{-\gamma_R}^{\sigma_R} \left(\vert \dot k_{i,R} \vert^2 + \delta_R^{2+\alpha} D^2 V(\delta_R v_R + c_1)[k_{i,R},k_{i,R}]\right) \\
& = \int_{-\gamma_R}^{\sigma_R} \left(\vert \dot k_{i,R} \vert^2 -\frac{m_1}{\alpha \vert v_R \vert^{\alpha+2} \vert}\vert k_{i,R} \vert^2\right) \\
& \quad + \int_{-\gamma_R}^{\sigma_R} \left(  m_1 \frac{\left( v_R \cdot k_{i,R}\right)^2} {(\alpha+2) \vert v_R \vert^{\alpha+4}}
 + \delta_R^{2+\alpha} d^2 \Phi_1(\delta_R v_R + c_1)[k_{i,R},k_{i,R}]\right)
\end{align*}
Recalling that $v_\infty(t) \cdot e \equiv 0$ and passing to the limit, we easily obtain that
$$
\lim_{R \to +\infty} \delta_R^{\alpha/2-1} d^2 \mathcal{A}_{[\tau^-_R,\tau^+_R]}(x_R) [h_{i,R},h_{i,R}] = \int_{-L}^L \left(
\dot \varphi_i^2 - \frac{m_1}{\alpha \vert v_\infty \vert^{\alpha+2}}\varphi_i^2\right)
$$ 
which is negative in view of \eqref{autov}. This gives the desired conclusion \eqref{morseblowup}.
\smallbreak
In the case $\alpha > 1$, \eqref{morseblowup} immediately leads to a contradiction. Indeed, combining
\eqref{morsefix} together with the easy observation that
$\textnormal{j}( d^2 \mathcal{A}_{[\tau^-_R,\tau^+_R]}(x_R)) \leq \textnormal{j}\left( d^2 \mathcal{A}_{[-\omega_R,\omega_R]}(x_R)\right)$
yields a contradiction with $i(\alpha) > 1$.
\smallbreak
In the case $\alpha = 1$, more work is needed. At first, we observe that, arguing as in the proof of \eqref{stimatempi}
(see Section \ref{stimatempi}), we can prove that
$$
\int_{t^-_R}^{t^+_R} \vert \dot x_R(t) \vert^2 \leq M + \frac{M}{\sqrt{2}},
$$
so that, using \eqref{stimatempi}, $\int_{t^-_R}^{t^+_R} (\vert x_R \vert^2 + \vert \dot x_R \vert^2)$ is bounded as well. 
On the other hand, 
$\vert x_R(t^\pm_R) \vert$, $\vert \dot x_R(t^\pm_R) \vert = \sqrt{2V(x_R(t^\pm_R))}$ and
$$
\Vert \ddot x_R \Vert_{L^\infty([-\omega_R,t^-_R] \cup [t^+_R,\omega_R])}
$$
are also bounded. As a consequence,
defining $\tilde x_R$ as in \eqref{xrtraslata}, we have that there exists a function $x_\infty: \mathbb{R} \to \mathbb{R}^3$ 
such that $\tilde x_R \to x_\infty$ in $H^1_{\textnormal{loc}}(\mathbb{R})$ (in particular, uniformly on compact sets).
From \eqref{deltar} we deduce that there exists $t_\infty \in \mathbb{R}$ such that $x_\infty(t_\infty) = c_1$; moreover,
via a blow-up analysis analogous to the one leading to \eqref{morseblowup} (and recalling \eqref{morsefix}),    
we see that $x_\infty(t) \notin \Sigma$ for $t \neq t_\infty$. As a consequence, $x_\infty$ is a (one-collision) generalized parabolic solution of
\eqref{main} and, reasoning as in the proof of Claim 4 in Proposition \ref{approssimazione}, we obtain 
\begin{equation}\label{limitidiversi}
\lim_{t \to -\infty} \frac{x_\infty(t)}{\vert x_\infty(t) \vert} = \xi^- \neq \xi^+ = \lim_{t \to +\infty} \frac{x_\infty(t)}{\vert x_\infty(t) \vert}.
\end{equation}
On the other hand, we can argue exactly as in Section \ref{noncoll} (using regularization techniques) to prove that
$$
x_\infty(t) = x_\infty(2t_\infty-t), \quad \mbox{ for all } t \in \mathbb{R}.
$$
This clearly contradicts \eqref{limitidiversi}.

\subsection{Proof of \eqref{limitesup}}\label{limitesupsec}

By contradiction, assume that 
$$
\rho_R := \min_t \vert x_R(t) \vert = \vert x_R(\tau_R) \vert \to +\infty.
$$
(notice that here $\tau_R$ has a different meaning with respect to \eqref{deltar}). 
In particular, we can always suppose $\rho_R \geq K$; then, Lemma \ref{lemlj} and Corollary 
\ref{cortempi} are applicable and we obtain
\begin{equation}\label{tempifin}
\begin{array}{ll}
\vspace{0.2cm}
\displaystyle{\omega_R - \tau_R} & \geq \displaystyle{\frac{1}{(1+\alpha/2)\sqrt{2C_+}}\left( R^{1+\alpha/2}-\rho_R^{1+\alpha/2}\right)},
\\
\displaystyle{-\omega_R - \tau_R} & \geq \displaystyle{\frac{1}{(1+\alpha/2)\sqrt{2C_+}}\left( R^{1+\alpha/2}-\rho_R^{1+\alpha/2}\right)}.
\end{array}
\end{equation}
Let us set
$$
d_R = \frac{\rho_R}{R} \in (0,1], \qquad d = \lim_{R \to +\infty}d_R \in [0,1], 
$$
and we distinguish two cases.
\medbreak
If $d = 0$, we define
$$
v_R(t) = \frac{1}{\rho_R}\left(x_R\left( \rho_R^{1+\alpha/2}t + \tau_R \right) \right), \qquad t \in [-\gamma_R,\sigma_R],
$$
where
$$
-\gamma_R = \frac{-\omega_R - \tau_R}{\rho_R ^{1+\alpha/2}} \qquad 
\mbox{and} \qquad  
\sigma_R = \frac{\omega_R - \tau_R}{\rho_R^{1+\alpha/2}}.
$$
Notice that $\vert v_R(0) \vert = 1$, $1 \leq \vert v_R(t) \vert \leq R/\rho_R$ for
$t \in  [-\gamma_R,\sigma_R]$. Writing $V$ as in \eqref{vinf}, the function $v_R$ satisfies
$$
\ddot v_R = - \frac{m v_R}{\vert v_R \vert^{\alpha+2}} +\rho_R^{1+\alpha} \nabla W(\rho_R v_R)
$$
and
$$
\frac{1}{2}\vert \dot v_R \vert^2 = \frac{m}{\alpha \vert v_R \vert^{\alpha}} + \rho_R^{\alpha} W(\rho_R v_R).
$$
Moreover, from \eqref{tempifin} we obtain 
$$
\sigma_R = \frac{\omega_R - \tau_R}{\rho_R^{1+\alpha/2}} \geq \frac{1}{(1+\alpha/2)\sqrt{2C_+}}\frac{1-d_R^{1+\alpha/2}}
{d_R^{1+\alpha/2}} \to +\infty
$$
and, analogously, $-\gamma_R \to -\infty$. Finally, using \eqref{stimaW} we find
\begin{equation}\label{w1}
\vert \rho_R^{1+\alpha} \nabla W(\rho_R v_R) \vert \leq \rho_R^{1+\alpha} \frac{C_+}{\vert \rho_R v_R \vert^{\alpha+3}} \leq \frac{C_+}{\rho_R^2} \to 0
\end{equation}
and
\begin{equation}\label{w2}
\vert \rho_R^{\alpha} W(\rho_R v_R) \vert \leq \rho_R^{\alpha} \frac{C_+}{\vert \rho_R v_R \vert^{\alpha+2}} \leq \frac{C_+}{\rho_R^2} \to 0
\end{equation}
for $R \to +\infty$, uniformly in $t$. We can thus readily see
that $v_R \to v_\infty$ in $\mathcal{C}^2_{\textnormal{loc}}(\mathbb{R})$, with $v_\infty$ an entire parabolic solution of the problem
$$
\ddot v_\infty = - \frac{m v_\infty}{\vert v_\infty \vert^{\alpha+2}}.
$$
Moreover, following the arguments used in the proof of Claim 4 in Proposition \ref{approssimazione}, we also have
$$
\lim_{t \to -\infty} \frac{v_\infty(t)}{\vert v_\infty(t) \vert} = \xi^- \neq \xi^+ = \lim_{t \to +\infty} \frac{v_\infty(t)}{\vert v_\infty(t) \vert}
$$
This immediately gives a contradiction in the case $\alpha = 1$,
since, as well-known, the asymptotic directions of parabolic solutions of the Kepler problem must coincide (cf. Proposition \ref{propspan}).
 On the other hand, for $\alpha > 1$ we can argue as in Section \ref{limiteinfsec}
(using this time \eqref{w1} and \eqref{w2} to pass to the limit) 
 to prove that
 $$
 \liminf_{R \to +\infty} \textnormal{j}\left( d^2 \mathcal{A}_{[-\omega_R,\omega_R]}(x_R)\right) \geq i(\alpha)
 $$
 and thus contradicting \eqref{morsefix} since $i(\alpha) \geq 2$ for $\alpha > 1$. 
\medbreak
We now focus on the case $d \in (0,1]$. Let us define
$$
\tilde v_R(t) = \frac{1}{R}\left(x_R\left( R^{1+\alpha/2}t + \tau_R \right) \right), \qquad t \in [-\tilde\gamma_R,\tilde\sigma_R],
$$
where
$$
-\tilde\gamma_R = \frac{-\omega_R - \tau_R}{R ^{1+\alpha/2}} \qquad 
\mbox{and} \qquad  
\tilde\sigma_R = \frac{\omega_R - \tau_R}{R^{1+\alpha/2}}.
$$
The function $\tilde v_R$ satisfies
$$
\ddot{\tilde{v}}_R = - \frac{m \tilde v_R}{\vert \tilde v_R \vert^{\alpha+2}} +R^{1+\alpha} \nabla W(R \tilde v_R)
$$
and
$$
\frac{1}{2}\vert \dot{\tilde{v}}_R \vert^2 = \frac{m}{\alpha \vert \tilde v_R \vert^{\alpha}} + R^{\alpha} W(R \tilde v_R).
$$
Moreover, $\vert \tilde v_R(0) \vert = d_R$, $\tilde v_R(-\tilde\gamma_R) = \xi^-$, $\tilde v_R(\tilde\sigma_R) = \xi^+$ and $d_R \leq \vert \tilde v_R(t) \vert \leq 1$ for $t \in  [-\tilde\gamma_R,\tilde\sigma_R]$. Finally, similarly as in \eqref{w1} and \eqref{w2},
\begin{equation}\label{w3}
\vert R^{1+\alpha} \nabla W(R \tilde v_R) \vert \leq \frac{C_+}{R^2}\left(\frac{2}{d}\right)^{\alpha+2}, \qquad   
\vert R^{\alpha} W(R \tilde v_R) \vert \leq \frac{C_+}{R^2} \leq \frac{C_+}{R^2}\left(\frac{2}{d}\right)^{\alpha+3}
\end{equation}
for $R$ large enough.

We now claim that $\tilde\sigma_R + \tilde\gamma_R$ is bounded away from zero. 
Indeed, if $\tilde\sigma_R \to 0^+$ and $-\tilde\gamma_R \to 0^-$, then
from
\begin{equation}\label{lim1}
\xi^+ = \tilde v_R(\tilde\sigma_R) = \tilde v_R(0) + \int_0^{\tilde\sigma_R} \dot{\tilde{v}}_R(t)\,dt
\end{equation}
and
\begin{equation}\label{lim2}
\xi^- = \tilde v_R(-\tilde\gamma_R) = \tilde v_R(0) + \int_{-\tilde\gamma_R}^0 \dot{\tilde{v}}_R(t)\,dt,
\end{equation}
together with the fact that $\max_t \vert \dot{\tilde{v}}_R(t) \vert $ is bounded in $R$ in view of \eqref{w3}, we obtain
$\tilde v_R(0) \to \xi^+$  and $\tilde v_R(0) \to \xi^-$, which is not possible
since $\xi^+ \neq \xi^-$.

As a consequence, there exists a nontrivial interval $\tilde I_\infty = [-\tilde\gamma_\infty,\tilde\sigma_\infty]$ 
such that $\tilde v_R \to \tilde v_\infty$ in $\mathcal{C}^2_{\textnormal{loc}}(\tilde I_\infty)$;
moreover, $d \leq \vert \tilde v_\infty(t) \vert \leq 1$ for $t \in \tilde I_\infty$ and $\tilde v_\infty$ is a parabolic solution of
\begin{equation}\label{pblimite}
\ddot{\tilde{v}}_\infty = - \frac{m \tilde v_\infty}{\vert \tilde v_\infty \vert^{\alpha+2}}.
\end{equation}
This is possible only if $\tilde I_\infty$ is a compact interval (compare with the discussion before Proposition \ref{propspan});
as a consequence, the $\mathcal{C}^2_{\textnormal{loc}}$ convergence actually reduces to the $\mathcal{C}^2$ one.
Summing up, and passing to the limit in \eqref{lim1} and \eqref{lim2}, 
$\tilde v_\infty$ is a parabolic solution of the (free-time) fixed-endpoints problem
$$
\left\{
\begin{array}{l}
\vspace{0.2cm}
\displaystyle{\ddot{\tilde{v}}_\infty = - \frac{m \tilde v_\infty}{\vert \tilde v_\infty \vert^{\alpha+2}}}\\
\tilde v_\infty(T_1) = \xi^-, \; \tilde v_\infty(T_2) = \xi^+
\end{array}
\right.
$$
with $T_1 = -\tilde\gamma_\infty$ and $T_2 = \tilde\sigma_\infty$.

Now, using the fact that $x_R$ has zero energy, we write
\begin{align*}
\mathcal{A}_{[-\omega_R,\omega_R]}(x_R) & = \int_{-\omega_R}^{\omega_R} 2 V(x_R(t))\,dt = 2 R^{1+\alpha/2} \int_{-\tilde\sigma_R}^{\tilde\gamma_R} V(R \tilde v_R(s))\,ds \\
& = 2 R^{1-\alpha/2} \int_{-\tilde\sigma_R}^{\tilde\gamma_R} \left( \frac{m}{\alpha \vert \tilde v_R(s) \vert^\alpha} + R^\alpha W(R \tilde v_R(s))\right) \,ds
\end{align*}
so that, using \eqref{w3},
$$
\lim_{R \to +\infty} \frac{\mathcal{A}_{[-\omega_R,\omega_R]}(x_R)}{R^{1-\alpha/2}} = 2 
\int_{-\tilde\sigma_\infty}^{\tilde\gamma_\infty} \frac{m}{\alpha \vert \tilde v_\infty(s) \vert^\alpha}\,ds.
$$
Using Proposition \ref{azionebolza}
$$
\int_{-\tilde\sigma_\infty}^{\tilde\gamma_\infty} \frac{m}{\alpha \vert \tilde v_\infty(s) \vert^\alpha}\,ds = \mathcal{A}^{\alpha,m}_{[-\tilde\gamma_\infty,\tilde\sigma_\infty]}(\tilde v_\infty) < \sqrt{\frac{2m}{\alpha}}\frac{4}{2-\alpha},
$$
so that a contradiction with \eqref{stimalivello} is obtained.

\begin{remark}\label{xi}
When $\xi^+ = \xi^-$, the arguments developed along the paper can be adapted to prove the existence of a generalized (see Section \ref{secgen}) spatial parabolic solutions of \eqref{main} satisfying \eqref{asintoticast} (for $\xi^+ = \xi^-$).

Indeed, we first observe that a variant of Theorem \ref{thfix} can be proved for $\xi^+ = \xi^-$, giving the existence of a generalized parabolic solution
of \eqref{bol} satisfying the level estimate \eqref{stimalivello}. 
This can be done via an approximation argument for $\xi_n^+ \to \xi^+$ and $\xi_n^- \to \xi^-$ (with $\xi_n^- \neq \xi_n^-$), the convergence for $n \to +\infty$ of the corresponding solution coming from \eqref{stimalivello} (with some care, it is possible to see that the constant $M$ can be chosen independently on $n$).

Second, we pass to the limit $R \to +\infty$ following the steps in the proof of Proposition \ref{approssimazione}.
Minor variants are needed, since just $H^1_{\textnormal{loc}}$ convergence is possible near the collision instants; however, a careful use
of the action estimate \eqref{stimalivello} allows us to obtain the conclusion. We leave the details to the reader for the sake of briefness.
\end{remark}

\section{Appendix: the $\alpha$-homogeneous problem}

In this final section we collect some useful results about parabolic solutions of the $\alpha$-homogeneous problem
\begin{equation}\label{aom}
\ddot x = - \frac{\mu x }{\vert x \vert^{\alpha+2}}, \qquad x \in \mathbb{R}^3 \setminus \{0\},
\end{equation}
where $\mu > 0$ and $\alpha \in [1,2)$. Of course, the term \emph{parabolic} is here meant with respect to the natural energy associated to \eqref{aom}, namely
$x$ is a parabolic solution
 of \eqref{aom} if $\tfrac{1}{2}\vert \dot x(t) \vert^ 2 = \frac{\mu}{\alpha \vert x(t) \vert^\alpha}$.
 
It is well-known that any solution to \eqref{aom} is contained in a plane; therefore, without loss of generality we assume that $x \in \mathbb{R}^2$ and we use polar coordinates
$$
x(t) = r(t)e^{i\theta(t)}, \qquad r(t) > 0.
$$
Recall also that any solution $x: I \to \mathbb{R}^2 \setminus \{0\}$ (with $I \subset \mathbb{R}$ interval) to \eqref{aom} has constant angular momentum, that is (in polar coordinates)
\begin{equation}\label{momang}
r^2(t) \dot\theta(t) \equiv c, \qquad t \in I, \quad \mbox{ for some } \, c \in \mathbb{R}.
\end{equation}
In particular, either the function $t \mapsto \theta(t)$ is constant ($c=0$) or it is strictly monotone ($c \neq 0$).
Combining \eqref{momang} with the fact that $x$ has zero energy, we obtain
\begin{equation}\label{ene}
\frac{1}{2} \dot r^2(t) = \frac{\mu}{\alpha r^\alpha(t)} - \frac{1}{2} \frac{c^2}{r^2(t)}, \qquad t \in I.
\end{equation}
Finally, the Lagrange-Jacobi identity (compare with \eqref{lj}) reads as
\begin{equation}\label{lj2}
\frac{d^2}{dt^2}\left( \frac{1}{2}r^2(t)\right) = \frac{(2-\alpha)\mu}{\alpha r^{\alpha}(t)}, \qquad t \in I.
\end{equation}

The case of parabolic solutions with zero angular momentum is easily discussed. Indeed, by integrating \eqref{ene} for $c = 0$ we find that
the only solutions are of the type
$$
x(t) = \gamma_{\alpha,\mu} (t-t_0)^{\frac{2}{2+\alpha}} e^{i\theta_0}, \qquad t \in I = (t_0,+\infty),
$$
$$
x(t) = \gamma_{\alpha,\mu} (t_0-t)^{\frac{2}{2+\alpha}} e^{i\theta_0}, \qquad t \in I = (-\infty,t_0),
$$
for $t_0 \in \mathbb{R}$, $\theta_0 \in [0,2\pi)$, where 
$$
\gamma_{\alpha,\mu} = \left(\sqrt{\frac{\mu}{2\alpha}} (2+\alpha)\right)^{\frac{2}{2+\alpha}}
$$
as already defined in \eqref{gamdef}. In particular, there are no entire rectilinear parabolic solutions of \eqref{aom}.
\smallbreak
From now, we thus consider the case of solutions with non-zero angular momentum. 
First, we deal with entire parabolic solutions to \eqref{aom}. From the Lagrange-Jacobi identity \eqref{lj2} we deduce that there exists
$t_* \in \mathbb{R}$ such that $\dot r(t) < 0$ for $t < t_*$ and $\dot r(t) > 0$ for $t > t_*$; moreover,
$r(t) \to +\infty$ for $\vert t \vert \to +\infty$ (compare with \cite[Lemma 7.6]{BarTerVer14}).
We also have the following.

\begin{proposition}\label{propspan}
Let $x: \mathbb{R} \to \mathbb{R}^3 \setminus \{0\}$ be a parabolic solution of \eqref{aom} (with angular momentum $c \neq 0$).
Then
\begin{equation}\label{romo}
r(t) \sim \gamma_{\alpha,\mu} \vert t \vert^{\frac{2}{2+\alpha}}, \qquad \vert t \vert \to +\infty,
\end{equation}
and the limits $\theta(\pm\infty) := \lim_{t \to \pm \infty}\theta(t)$ satisfy
\begin{equation}\label{span}
\vert \theta(+\infty) - \theta(-\infty) \vert = \frac{2\pi}{2-\alpha}.
\end{equation}
\end{proposition}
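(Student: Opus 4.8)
The plan is to treat the two assertions separately, deriving \eqref{romo} exactly as in Proposition \ref{asintotica} and then computing the swept angle \eqref{span} by an explicit quadrature.

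For \eqref{romo}, I would start from the energy relation \eqref{ene}, which I rewrite as
\[
r^\alpha(t)\,\dot r^2(t) = \frac{2\mu}{\alpha} - \frac{c^2}{r^{2-\alpha}(t)} .
\]
Since $r(t)\to+\infty$ as $|t|\to+\infty$ (recalled before the statement, via \eqref{lj2}) and $2-\alpha>0$, the right-hand side tends to $2\mu/\alpha$; this is the exact analogue of the function $\Gamma$ appearing in the proof of Proposition \ref{asintotica}, so the same application of de l'H\^opital's rule yields \eqref{romo}. Equivalently, on the branch $t>t_*$ one has $r^{\alpha/2}\dot r\to\sqrt{2\mu/\alpha}$, whence $\tfrac{2}{2+\alpha}r^{1+\alpha/2}\sim\sqrt{2\mu/\alpha}\,t$ and \eqref{romo} follows. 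As a by-product, $\dot\theta=c/r^2=O(|t|^{-4/(2+\alpha)})$ is integrable at $\pm\infty$ because $\alpha<2$, so the limits $\theta(\pm\infty)$ are finite and \eqref{span} is meaningful.

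For \eqref{span}, recall from \eqref{lj2} that $r$ has a unique minimum at some $t_*$, attained at the perihelion radius $r_{\min}=\left(\tfrac{\alpha c^2}{2\mu}\right)^{1/(2-\alpha)}$ (the unique zero of $\dot r$), and that $\dot\theta=c/r^2$ has constant sign, so $\theta$ is strictly monotone. Splitting the total variation at $t_*$ and changing variables from $t$ to $r$ via $dt=dr/\dot r$ with $\dot r=\pm\sqrt{\tfrac{2\mu}{\alpha r^\alpha}-\tfrac{c^2}{r^2}}$, both half-orbits contribute the same $r$-integral, and I obtain
\[
\bigl|\theta(+\infty)-\theta(-\infty)\bigr|
=2\int_{r_{\min}}^{+\infty}\frac{|c|}{r\sqrt{\tfrac{2\mu}{\alpha}\,r^{2-\alpha}-c^2}}\,dr .
\]
Using $r_{\min}^{2-\alpha}=\tfrac{\alpha c^2}{2\mu}$ I factor the radicand as $c^2\bigl((r/r_{\min})^{2-\alpha}-1\bigr)$ and substitute $s=r/r_{\min}$, which removes all the constants and reduces the expression to $2\int_1^{+\infty}\frac{ds}{s\sqrt{s^{2-\alpha}-1}}$. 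The substitution $u=s^{2-\alpha}$ turns this into $\frac{2}{2-\alpha}\int_1^{+\infty}\frac{du}{u\sqrt{u-1}}$, and the elementary substitution $u=1+p^2$ gives $\int_1^{+\infty}\frac{du}{u\sqrt{u-1}}=\pi$. Hence the swept angle equals $\frac{2\pi}{2-\alpha}$, which is \eqref{span}.

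The asymptotic part is entirely routine, mirroring Proposition \ref{asintotica}. In the angle computation the only points needing care are the convergence of the improper integral at its two endpoints --- an integrable square-root singularity at $r=r_{\min}$ and an integrand of order $r^{-1-(2-\alpha)/2}$ at infinity, integrable precisely because $\alpha<2$ --- together with the bookkeeping of signs when splitting at $t_*$. There is no genuine obstacle here: the result is a clean explicit quadrature, and the value $\frac{2\pi}{2-\alpha}>2\pi$ is exactly the \emph{looping} overshoot anticipated in the Introduction.
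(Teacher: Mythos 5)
Your proposal is correct and follows essentially the same route as the paper: for \eqref{romo} both arguments study $\Gamma(t)=r^\alpha(t)\dot r^2(t)\to 2\mu/\alpha$ and invoke de l'H\^opital, and for \eqref{span} both split the orbit at the perihelion $t_*$ and compute the swept angle by the same explicit quadrature, using $r_*^{2-\alpha}=\alpha c^2/(2\mu)$ to evaluate it. The only difference is cosmetic --- the paper substitutes $\xi=1/r$ and then $\xi=\eta^{2/(2-\alpha)}$ to land on an $\arcsin$-type integral equal to $\pi/2$ per half-orbit, while you rescale by $r_{\min}$ and use $u=s^{2-\alpha}$, $u=1+p^2$ to land on the $\arctan$ integral; both yield $\pi/(2-\alpha)$ per branch.
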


We observe that the asymptotic estimate \eqref{romo} follows from \eqref{rasy}; however, in this simpler setting we can provide a slightly more direct proof.  
We also notice that, for $\alpha = 1$, \eqref{span} gives $\vert \theta(+\infty)-\theta(-\infty) \vert = 2\pi$, according to the fact 
that $t \mapsto x(t) = r(t)e^{i\theta(t)}$ parameterizes a parabola in the plane. 
On the other hand, for $\alpha > 1$, i) and ii) imply that $x(t)$ is a self-intersecting planar path, with exactly
$$
i_*(\alpha) = \max \left\{ k \in \mathbb{N} \, : \, k < \frac{1}{2-\alpha}\right\}
$$
self-intersection. Notice that this quantity is strictly related to the 
constant $i(\alpha)$ defined in \eqref{ialpha}.

\begin{proof}
We define the function
$$
\Gamma(t) = r^{\alpha}(t)\dot r^2(t).
$$
Using \eqref{ene} we obtain
$$
\Gamma(t) = \frac{2\mu}{\alpha} - c^2 r^{\alpha-2}(t),
$$
so that
$$
\lim_{\vert t \vert \to +\infty}\Gamma(t) = \frac{2\mu}{\alpha}. 
$$
Hence $r^{\alpha/2}(t)\dot r(t) \to \pm \sqrt{\tfrac{2\mu}{\alpha}}$ for $t \to \pm\infty$ and we obtain
the asymptotic estimate for $r$ using de l'Hopital rule.

To prove \eqref{span}, we assume (to fix the ideas) that $\dot\theta(t) > 0$ and let $r_* = r(t_*)$. Using \eqref{momang} and \eqref{ene}, we have
\begin{align*}
\theta(+\infty) - \theta(t_*) & = c \int_{t_*}^\infty \frac{dt}{r^2(t)} =
c \int_{t_*}^\infty \frac{\dot r(t)}{r^2(t)\sqrt{\frac{2\mu}{\alpha r^\alpha(t)} - \frac{c^2}{r^2(t)}}}\,dt
\\ & = \int_{0}^{\frac{1}{r_*}} \frac{d\xi}{\sqrt{\frac{2\mu}{\alpha c^2}\xi^\alpha - \xi^2}} = 
\frac{2}{2-\alpha}\int_{0}^{r_*^{(\alpha-2)/2}} \frac{d\eta}{\sqrt{\frac{2\mu}{\alpha c^2} - \eta^2}}, 
\end{align*} 
where in the last equality we have used the change of variable $\xi = \eta^{2/(2-\alpha)}$.
From \eqref{ene} with $t = t_*$ we find
$
r_*^{\alpha-2} = \tfrac{2\mu}{\alpha c^2}, 
$
so that
$$
\int_{0}^{r_*^{(\alpha-2)/2}} \frac{d\eta}{\sqrt{\frac{2m}{\alpha c^2} - \eta^2}} = \frac{\pi}{2}
$$
and, therefore,
$$
\theta(+\infty) - \theta(t_*) = \frac{\pi}{2-\alpha}.
$$
Evaluating in an analogous way $\theta(t_*) - \theta(-\infty)$, we conclude.
\end{proof}

We now look for parabolic solutions of the (free-time) fixed-endpoints problem
\begin{equation}\label{freeapp2}
\left\{
\begin{array}{l}
\vspace{0.1cm}
\displaystyle{\ddot x = - \frac{\mu x }{\vert x \vert^{\alpha+2}}} \\
x(T_1) = x_1, \quad x(T_2) = x_2,  
\end{array}
\right.
\end{equation}
where $x_1, x_2 \in \mathbb{R}^2$. Our aim is to prove the following result.

\begin{proposition}\label{azionebolza}
Let $x_1, x_2 \in \mathbb{R}^2$ be such that $x_1 \neq x_2$ and $\vert x_1 \vert = \vert x_2 \vert = 1$. If $x$ is a parabolic solution of problem \eqref{freeapp2}, 
then
$$
\mathcal{A}^{\alpha,\mu}_{[T_1,T_2]}(x) < \sqrt{\frac{2\mu}{\alpha}}\frac{4}{2-\alpha},
$$
where $\mathcal{A}^{\alpha,\mu}_{[T_1,T_2]}(x) = \int_{T_1}^{T_2} \left( \tfrac{1}{2}\vert \dot x \vert^2 + \frac{\mu}{\alpha \vert x \vert^{\alpha}}\right)$ is the action functional associated with \eqref{aom}.
\end{proposition}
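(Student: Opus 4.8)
The plan is to exploit the fact that for a zero-energy solution the action coincides with $\int 2V$ and can be computed in closed form in polar coordinates. First I would record that, since $\tfrac{1}{2}\vert \dot x\vert^2=\frac{\mu}{\alpha\vert x\vert^\alpha}$, one has $\mathcal{A}^{\alpha,\mu}_{[T_1,T_2]}(x)=\int_{T_1}^{T_2}\frac{2\mu}{\alpha\,r^\alpha(t)}\,dt$, where $r=\vert x\vert$. The hypothesis $x_1\neq x_2$ forces the angular momentum $c=r^2\dot\theta$ in \eqref{momang} to be nonzero: a solution with $c=0$ is radial, hence confined to a single ray through the origin, and such a ray meets the unit circle only once, so it could not join the two distinct endpoints $x_1,x_2$ without colliding. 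Thus $c\neq0$ and the energy relation \eqref{ene} is available.

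Next I would invoke the Lagrange--Jacobi identity \eqref{lj2}: since $r^2$ is strictly convex and $r(T_1)=r(T_2)=1$, the radius attains a unique interior minimum $r_*=r(t_*)\in(0,1)$, with $r$ strictly decreasing on $[T_1,t_*]$ and strictly increasing on $[t_*,T_2]$; here $r_*>0$ because the solution is non-collision. Evaluating \eqref{ene} at $t_*$, where $\dot r=0$, yields $c^2=\frac{2\mu}{\alpha}r_*^{\,2-\alpha}$, which recasts the energy relation as $\dot r^2=\frac{2\mu}{\alpha r^\alpha}\bigl(1-(r_*/r)^{2-\alpha}\bigr)$ on each monotone branch.

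The main computation is then a change of variables from $t$ to $r$. Splitting the integral at $t_*$ and using the strict monotonicity of $r$ on each half (the two halves contribute the same amount, since the integrand depends on $r$ alone) gives
$$
\mathcal{A}^{\alpha,\mu}_{[T_1,T_2]}(x)=2\sqrt{\frac{2\mu}{\alpha}}\int_{r_*}^{1}\frac{r^{-\alpha/2}}{\sqrt{1-(r_*/r)^{2-\alpha}}}\,dr.
$$
Substituting $s=r^{2-\alpha}$, so that $r^{-\alpha/2}\,dr=\frac{1}{2-\alpha}\,s^{-1/2}\,ds$ and $(r_*/r)^{2-\alpha}=s_*/s$ with $s_*=r_*^{\,2-\alpha}$, the integrand collapses to $(2-\alpha)^{-1}(s-s_*)^{-1/2}$, whence
$$
\mathcal{A}^{\alpha,\mu}_{[T_1,T_2]}(x)=\sqrt{\frac{2\mu}{\alpha}}\,\frac{4}{2-\alpha}\,\sqrt{1-r_*^{\,2-\alpha}}.
$$
Because the solution avoids the origin we have $r_*>0$, so $\sqrt{1-r_*^{\,2-\alpha}}<1$, and the asserted strict inequality follows at once.

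The only points needing care are the existence of the interior minimum and the legitimacy of the change of variables, and both are immediate from the strict convexity of $r^2$ and the strict monotonicity of $r$ on either side of $t_*$; I do not expect any genuine obstacle. It is worth observing that the bound is sharp precisely in the limit $r_*\to0^+$: the right-hand side $\sqrt{2\mu/\alpha}\,\tfrac{4}{2-\alpha}$ is exactly the Jacobi length of the degenerate ejection--collision path running straight from $x_1$ to the origin and back out to $x_2$, which explains why every genuine non-collision solution remains strictly below it.
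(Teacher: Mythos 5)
Your proof is correct, and it follows a genuinely different route from the paper's. The paper argues variationally: it first proves Proposition \ref{pianofasi} (existence and uniqueness of the parabolic arc with prescribed rotation index under condition \eqref{elle}), then identifies the given solution, up to rescaling, with the \emph{unique} minimizer of the Maupertuis functional over the weak closure $K_l$ of its homotopy class --- collision-freeness of minimizers being imported from \cite[Corollary 1.11]{SoaTer13} --- and finally obtains the strict inequality by comparison with the ejection--collision test path $u(t)=\vert t \vert^{2/(2+\alpha)}x_i$, whose Maupertuis value is exactly $\tfrac{1}{2}\bigl(\sqrt{2\mu/\alpha}\,\tfrac{4}{2-\alpha}\bigr)^2$. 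You instead compute the action in closed form: your preliminary reductions are all sound (nonzero angular momentum follows from $x_1 \neq x_2$ since a radial arc meets the unit circle in one point; the Lagrange--Jacobi identity \eqref{lj2} gives a unique interior minimum $r_* \in (0,1)$ of the radius; evaluating \eqref{ene} at $t_*$ gives $c^2 = \tfrac{2\mu}{\alpha}r_*^{2-\alpha}$), the change of variables $t \mapsto r \mapsto s = r^{2-\alpha}$ is legitimate on each monotone branch (the singularity of the integrand at $r = r_*$ is of order $(r-r_*)^{-1/2}$, hence integrable), and the resulting exact identity $\mathcal{A}^{\alpha,\mu}_{[T_1,T_2]}(x) = \sqrt{\tfrac{2\mu}{\alpha}}\,\tfrac{4}{2-\alpha}\,\sqrt{1-r_*^{2-\alpha}}$ yields the strict bound at once, since $r_* > 0$ for a non-collision solution. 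Your approach is more elementary and self-contained --- it needs neither Proposition \ref{pianofasi} nor the collision-avoidance result of \cite{SoaTer13} --- and it proves strictly more: an exact formula exhibiting the right-hand side as the supremum of the action, approached only in the collision limit $r_* \to 0^+$, which your closing remark correctly identifies with the action of the degenerate ejection--collision path (precisely the paper's test path). What the paper's argument buys in exchange is structural information consonant with the rest of the paper, namely that these parabolic arcs are the unique Maupertuis minimizers in their homotopy classes; for the inequality itself, your computation is the shorter path.
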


The proof of Proposition \ref{azionebolza} will be based on the fact that solutions of problem \eqref{freeapp2} can be classified
according to their homotopy class in the punctured plane $\mathbb{R}^2 \setminus \{0\}$. Precisely, defining the rotation index
$\textnormal{Rot}_{[T_1,T_2]}(x)$ of the path $t \mapsto x(t) = r(t)e^{i\theta(t)}$
as
$$
\textnormal{Rot}_{[T_1,T_2]}(x) = \frac{\theta(T_2)-\theta(T_1)}{2\pi}, 
$$
it is clear that any solution of \eqref{freeapp2} satisfies
\begin{equation}\label{rot}
\textnormal{Rot}_{[T_1,T_2]}(x) = \frac{\theta_2-\theta_1}{2\pi} + l
\end{equation}
for some $l \in \mathbb{Z}$, where $x_i = e^{i\theta_i}$, $\theta_i \in [0,2\pi)$, $i=1,2$.

An existence and uniqueness result for parabolic solutions of \eqref{freeapp2} with prescribed rotation index is given in the Proposition below.

\begin{proposition}\label{pianofasi}
Let $x_1, x_2 \in \mathbb{R}^2$ be such that $x_1 \neq x_2$ and $\vert x_1 \vert = \vert x_2 \vert = 1$ and let 
$l \in \mathbb{Z}$. Then, problem \eqref{freeapp2} has a parabolic solution
satisfying \eqref{rot} if and only if
\begin{equation}\label{elle}
\vert \theta_2 - \theta_1 + 2\pi l \vert < \frac{2\pi}{2-\alpha}
\end{equation}
and, in this case, the solution is unique (up to a time-translation).
\end{proposition}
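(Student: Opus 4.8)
The plan is to reduce the statement to an explicit computation of the angle swept by a parabolic arc as a function of its perihelion radius, and then to read off both existence and uniqueness from the monotonicity of this angle. First I would record that any parabolic solution of \eqref{freeapp2} with $x_1\neq x_2$ must have nonzero angular momentum $c$: if $c=0$ then by \eqref{momang} the path is rectilinear, hence contained in a single ray through the origin on which $r$ is strictly monotone, so it meets the circle $\{|x|=1\}$ at most once and cannot join two distinct unit vectors. Thus $c\neq 0$, and by the discussion preceding Proposition \ref{propspan} the arc lies on an entire parabolic solution whose radius has a single minimum $r_*$ at some time $t_*$. Since $r$ is strictly monotone on each side of $t_*$, the circle $\{|x|=1\}$ can be reached only once on each branch; as the two endpoints both lie on this circle, $t_*$ must be interior to $[T_1,T_2]$ and $r_*<1$. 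Consequently the arc is symmetric about $t_*$, the endpoints sit on opposite branches, and the total angle swept equals $\sgn(c)\,2\phi(r_*)$, where $\phi(r_*)$ denotes the angle from perihelion out to radius $1$.

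Next I would compute $\phi(r_*)$ in closed form. Evaluating \eqref{ene} at $t_*$ (where $\dot r=0$) gives $r_*^{\,\alpha-2}=\tfrac{2\mu}{\alpha c^2}$, which lets me factor
\[
\frac{2\mu}{\alpha r^\alpha}-\frac{c^2}{r^2}=\frac{c^2}{r^2}\left(\Big(\tfrac{r}{r_*}\Big)^{2-\alpha}-1\right).
\]
Combining this with \eqref{momang} yields $\frac{d\theta}{dr}=\frac{\sgn(c)}{r\sqrt{(r/r_*)^{2-\alpha}-1}}$, and the substitution $w=(r/r_*)^{2-\alpha}$, together with the elementary primitive $\int \frac{dw}{w\sqrt{w-1}}=2\arctan\sqrt{w-1}$, gives
\[
\phi(r_*)=\frac{2}{2-\alpha}\arctan\sqrt{r_*^{\,\alpha-2}-1}.
\]
Letting $r_*\to 0^+$ recovers the half-span $\tfrac{\pi}{2-\alpha}$ of Proposition \ref{propspan}, a useful consistency check. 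Hence the total swept angle is $g(r_*):=\tfrac{4}{2-\alpha}\arctan\sqrt{r_*^{\,\alpha-2}-1}$.

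Finally I would analyze $g$. Since $r_*^{\,\alpha-2}$ is strictly decreasing on $(0,1)$ and $\arctan\sqrt{\,\cdot\,}$ is strictly increasing, $g$ is a strictly decreasing continuous bijection of $(0,1)$ onto $\left(0,\tfrac{2\pi}{2-\alpha}\right)$, with $g(r_*)\to 0$ as $r_*\to 1^-$ and $g(r_*)\to \tfrac{2\pi}{2-\alpha}$ as $r_*\to 0^+$. By \eqref{rot} the rotation constraint is exactly $\theta(T_2)-\theta(T_1)=\theta_2-\theta_1+2\pi l=:\Theta$, while the first paragraph gives $\theta(T_2)-\theta(T_1)=\sgn(c)\,g(r_*)$. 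Since $x_1\neq x_2$ forces $\Theta\neq 0$, the condition $|\Theta|<\tfrac{2\pi}{2-\alpha}$, i.e. \eqref{elle}, is equivalent to $|\Theta|\in\big(0,\tfrac{2\pi}{2-\alpha}\big)$, which by the bijectivity of $g$ is precisely the range of attainable swept angles; this establishes existence if and only if \eqref{elle} holds. When it holds, $r_*=g^{-1}(|\Theta|)$ is uniquely determined, so $|c|=\sqrt{\tfrac{2\mu}{\alpha}}\,r_*^{(2-\alpha)/2}$ is fixed, $\sgn(c)=\sgn(\Theta)$ is fixed, and the perihelion direction is fixed by the requirement $x(T_1)=x_1$; the only residual freedom is the choice of $t_*$, that is, a time-translation. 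I expect the main obstacle to be the second paragraph: carrying out the angular integral cleanly and rigorously justifying the geometric reduction (single interior perihelion, symmetry about $t_*$, and the correct branch assignment of the two endpoints), after which existence and uniqueness follow formally from the monotonicity of $g$.
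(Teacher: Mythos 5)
Your proposal is correct, and its skeleton matches the paper's: rule out $c=0$, reduce to a one-parameter family of zero-energy arcs symmetric about an interior perihelion, show the swept angle is a strictly monotone continuous bijection onto $\left(0,\tfrac{2\pi}{2-\alpha}\right)$, and read off existence and uniqueness (up to time translation) from that bijectivity. The difference lies in how the monotonicity and the range of the swept angle are established. The paper shoots in the angular momentum $c$: it rescales space by $|c|^{2/(2-\alpha)}$ and time by $|c|^{(\alpha+2)/(2-\alpha)}$ so that every solution has the \emph{same} radial profile $\rho_*(t)$, writes the swept angle as $\Theta(c)=\sgn(c)\int_{-T_c}^{T_c}\rho_*^{-2}(s)\,ds$ where only the endpoint $T_c$ in \eqref{tc} depends on $c$ (whence monotonicity is immediate), and then computes the two limits $\Theta(c)\to 0$ and $\Theta(c)\to\pm\tfrac{2\pi}{2-\alpha}$ by reusing the computation behind \eqref{span}. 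You shoot instead in the perihelion radius $r_*$ --- an equivalent parameter, since $r_*^{\alpha-2}=\tfrac{2\mu}{\alpha c^2}$ --- and evaluate the angular integral in closed form: your $g(r_*)=\tfrac{4}{2-\alpha}\arctan\sqrt{r_*^{\alpha-2}-1}$ is correct (it coincides with $\tfrac{4}{2-\alpha}\arccos\bigl(r_*^{(2-\alpha)/2}\bigr)$, which is what the paper's own substitution in the proof of \eqref{span} would give when stopped at radius $1$ instead of infinity), so monotonicity and both limiting values are read off from the formula rather than argued softly. What each approach buys: your closed form is shorter and makes the dependence on $r_*$ completely explicit; the paper's scaling argument is more robust, as it never needs the integral to have an elementary primitive. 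The points you flagged as needing care --- that an arc with $c\neq 0$ extends to an entire solution with a single interior perihelion, is symmetric about it, and is uniquely determined by $r_*$, $\sgn(c)$ and the perihelion direction --- are genuine proof obligations, but they follow from uniqueness for the regular second-order radial equation at the perihelion data $(r_*,\dot r=0)$ together with time-reversal invariance, which is exactly the content of the paper's phase-plane step; so they are not gaps in substance.
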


Based on this, we can give a proof of Proposition \ref{azionebolza}.

\begin{proof}[Proof of Proposition \ref{pianofasi}]
Assume that $x_*$ is a parabolic solution \eqref{freeapp2}. Then, $x_*$ satisfies \eqref{rot} for some $l \in \mathbb{Z}$ and, in view of Proposition \ref{pianofasi}, $l$ fulfills \eqref{elle}. Define
$$
\tilde K_l = \left\{ u \in H^1([-1,1];\mathbb{R}^2\setminus \{0\}) \, : \, 
\begin{array}{l}
\vspace{0.2cm}
u(-1) = x_1, \, u(1) = x_2 \\
\textnormal{Rot}_{[-1,1]}(u) = \frac{\theta_2-\theta_1}{2\pi} + l   
\end{array}
\right\} 
$$
and let $K_l$ be the closure of $\tilde K_l$ in the weak topology of $H^1$. We consider the minimization problem
\begin{equation}\label{pbminimo}
\min_{u \in K_l} \mathcal{I}(u)
\end{equation}
where $\mathcal{I}(u) = \int_{-1}^1 \vert \dot u \vert^2 \int_{-1}^1 \tfrac{\mu}{\alpha \vert u \vert^\alpha}$ is the zero-energy Maupertuis functional
associated to \eqref{aom} (we assume throughout this proof that the reader is familiar with the theory of the Maupertuis functional, as described for instance in \cite[Appendix B]{SoaTer13}). It is easy to see (compare with Lemma \ref{mauplem}) that the minimization problem \eqref{pbminimo} has a solution. The crucial point is that, since $l$ satisfies \eqref{elle}, we know from \cite[Corollary 1.11]{SoaTer13} that any minimum pont is collision-free and, hence, belongs to $\tilde K_l$. Therefore, a suitable rescaling solves problem \eqref{freeapp2}. By the uniqueness property in Proposition \ref{pianofasi}, we conclude that the minimization problem \eqref{pbminimo} has a unique solution $u_*$ which is nothing but a rescaling of $x_*$.
In particular,
$$
\mathcal{I}(u_*) < \mathcal{I}(u), \quad \mbox{ for any } u \in K_l.
$$
Now, on one hand $\mathcal{I}(u_*) = \tfrac{1}{2}\left(\mathcal{A}^{\alpha,\mu}_{[T_1,T_2]}(x_*)\right)^2$ (compare with \eqref{azione}). 
On the other hand, defining 
$$
u(t) = \begin{cases}
(-t)^{\frac{2}{2+\alpha}}\, x_1 & \quad \mbox{ if } \, t \in [-1,0] \\
t^{\frac{2}{2+\alpha}}\, x_2 & \quad \mbox{ if } \, t \in [0,1]
\end{cases}
$$ 
it is easy to see that
$u \in K_l$ and
$$
\mathcal{I}(u) = \frac{1}{2}\left( \sqrt{\frac{2\mu}{\alpha}}\frac{4}{2-\alpha}\right)^2,
$$
concluding the proof.
\end{proof}

We conclude the section by proving Proposition \ref{pianofasi}.

\begin{proof}[Proof of Proposition \ref{pianofasi}]
The fact that condition \eqref{elle} is necessary follows from \eqref{span}, recalling the fact that,
for a (non-rectilinear) parabolic solution $x(t) = e^{i\theta(t)}$, the function $t \mapsto \theta(t)$ is strictly monotone. We now focus on the existence and uniqueness of a parabolic solution of \eqref{freeapp2}-\eqref{rot} 
when \eqref{elle} is satisfied; without loss of generality, we will also take $T_1 = -T$ and 
$T_2 = T$, with $T > 0$ to be determined. 

At first, we observe that $x: [-T,T] \to \mathbb{R}^2 \setminus \{0\}$ is a parabolic solution of \eqref{freeapp2}
with angular momentum equal to $c$ (see \eqref{momang}) if and only if 
$$
y(t) = \frac{1}{\vert c \vert^{\frac{2}{2-\alpha}}}\, x \left( \vert c \vert^{\frac{\alpha+2}{2-\alpha}}\,t \right), \qquad 0 \leq \vert t \vert \leq T_c := \vert c \vert^{-\frac{\alpha+2}{2-\alpha}}\, T,  
$$ 
is a parabolic solution of
\begin{equation}\label{freeapp3}
\left\{
\begin{array}{l}
\vspace{0.1cm}
\displaystyle{\ddot y = - \frac{\mu y }{\vert y \vert^{\alpha+2}}} \\
\displaystyle{y(-T_c) = \frac{e^{i\theta_1}}{\vert c \vert^{\frac{2}{2-\alpha}}}, \quad y(T_c) = \frac{e^{i\theta_2}}{\vert c \vert^{\frac{2}{2-\alpha}}}},  
\end{array}
\right.
\end{equation}
with angular momentum equal to $\sgn(c)$; moreover, 
$\textnormal{Rot}_{[-T,T]}(x) = \textnormal{Rot}_{[-T_c,T_c]}(y)$.
Passing to polar coordinates $y(t) = \rho(t)e^{i\varphi(t)}$, with $\varphi(-T_c) = \theta_1$,
it is easy to see that this is equivalent to the equations
\begin{equation}\label{freeapp4}
\left\{
\begin{array}{ll}
\vspace{0.1cm}
\displaystyle{\frac{1}{2} \dot \rho^2 + F(\rho) = 0}, & \qquad \displaystyle{F(\rho) = \frac{\mu}{\alpha\rho^\alpha} - \frac{1}{2\rho^2}}, \\
\vspace{0.1cm}
\displaystyle{\dot \varphi = \frac{\sgn(c)}{\rho^2},}
\end{array}
\right.
\end{equation}
together with the boundary conditions
\begin{equation}\label{rhoc}
\rho(-T_c) = \rho(T_c) = \frac{1}{\vert c \vert^{\frac{2}{2-\alpha}}}
\end{equation}
and
\begin{equation}\label{phic}
\varphi(T_c) = \theta_2 + 2 \pi l.
\end{equation}

Let us define
$$
\rho_* = \left( \frac{\alpha}{2\mu} \right)^{\frac{1}{2-\alpha}},
$$
that is, $\rho_*$ is the unique point such that $F(\rho_*) = 0$. A simple phase-plane argument shows that
there exists a unique solution $\rho_*$ of the first-order differential equation $\frac{1}{2} \dot \rho^2 + F(\rho) = 0$ satisfying
$\rho_*(0) = \rho_*$; moreover, $\rho_*$ is an even function defined on the whole real line. Hence, we easily see that the first equation in \eqref{freeapp4} has a solution $\rho_c$ satisfying the boundary condition \eqref{rhoc} 
if and only if
$$
c^2 < \frac{2\mu}{\alpha}.
$$
In this case, $\rho_c(t) = \rho_*(t)$ for $t \in [-T_c,T_c]$, where
\begin{equation}\label{tc}
T_c = \sqrt{2} \int_{\rho_*}^{1/\vert c \vert^{\frac{2}{2-\alpha}}} \frac{dr}{\sqrt{-F(r)}}. 
\end{equation}
On the other hand, integrating the second equation and imposing the boundary condition \eqref{phic} we obtain
\begin{equation}\label{shooting}
\Theta(c) = \theta_2 - \theta_1 + 2\pi l,
\end{equation}
where we have set
$$
\Theta(c) = \sgn(c) \int_{-T_c}^{T_c} \frac{ds}{\rho_*^2(s)} = 2 \,\sgn(c) \int_0^{T_c}\frac{ds}{\rho_*^2(s)}, \qquad c \in \left(-\frac{2\mu}{\alpha}, \frac{2\mu}{\alpha}\right) \setminus \{0\}. 
$$

Now, recalling \eqref{tc} we immediately see that $\Theta$ is strictly decreasing on 
$\left(-\frac{2\mu}{\alpha},0\right)$ and on $\left(0,\frac{2\mu}{\alpha}\right)$, with
$$
\lim_{c \to \pm \frac{2\mu}{\alpha}}\Theta(c) = 0.
$$
On the other hand, $T_c \to +\infty$ for $c \to 0$ and 
$$
\lim_{c \to 0^{\pm}}\Theta(c) = \pm 2 \int_{0}^{+\infty} \frac{ds}{\rho_*^2(s)} = \pm \frac{2\pi}{2-\alpha} 
$$
as already shown along the proof of \eqref{span}.
Hence, \eqref{shooting} is uniquely solvable if and only if $0 \neq \vert \theta_2 - \theta_1 + 2\pi l \vert < \tfrac{2\pi}{2-\alpha}$,
which is precisely the assumption \eqref{elle} (notice that $\theta_2 - \theta_1 + 2\pi l \neq 0$ since $x_1 \neq x_2$).
\end{proof}

%

\smallbreak
\noindent
{\bf Conflict of Interest.} The authors declare that they have no conflict of interest.

\end{document}